\documentclass[11pt,twoside]{article}
\usepackage{bbm}
\usepackage{bm}
\usepackage[numbers,sort&compress]{natbib}
\usepackage{psfrag}
\usepackage{ifpdf}
\usepackage{multicol,graphics}
\usepackage{subfigure}
\usepackage[dvips]{graphicx}
\usepackage{color}% 支持彩色
\usepackage{epsfig}% eps图像
\usepackage{epstopdf}
\usepackage{float}
\usepackage{pgfplots}
\usetikzlibrary{positioning} %为了实现相对位置的设定
\usepackage{xcolor} %为了实现不同的颜色
\usepackage{dsfont}
\usepackage{enumitem}
\usepackage{indentfirst}
\usepackage[below]{placeins}
\usepackage[bookmarks=true,bookmarksopen=true,breaklinks=true,colorlinks=true,linkcolor=blue,
anchorcolor=blue,citecolor=blue,filecolor=blue,menucolor=blue,urlcolor=blue]{hyperref}
\usepackage{amsmath,amssymb,amsthm,mathrsfs,mathtools}
\newcommand{\norm}[1]{\displaystyle \left\| #1 \right\|}
\DeclareMathOperator{\tr}{tr}
\allowdisplaybreaks[3]
\setlength{\paperwidth}{8.5in}
\setlength{\paperheight}{11.6in}
\setlength{\textwidth}{6.4in}
\setlength{\textheight}{9.5in}
\setlength{\oddsidemargin}{0.1in}
\setlength{\evensidemargin}{0.1in}
\setlength{\topmargin}{0in}
\setlength{\headsep}{0in}
\setlength{\headheight}{0.0in}
\setlength{\marginparwidth}{0in}
\setlength{\marginparsep}{0in}
\setlength{\parskip}{0em}
\setlength{\parindent}{2em}

\newtheorem{theorem}{Theorem}[section]
\newtheorem{lemma}[theorem]{Lemma}

\newtheorem{proposition}[theorem]{Proposition}
\newtheorem{claim}[theorem]{Claim}
\newtheorem{definition}{Definition}[section]

\newtheorem{remark}[theorem]{Remark}

\newtheorem{step}{Step}
\newcounter{RomanNumber}

\numberwithin{equation}{section}
\newcommand\blfootnote[1]{%
\begingroup
\renewcommand\thefootnote{}\footnote{#1}%
\addtocounter{footnote}{-1}%
\endgroup
}
\allowdisplaybreaks[3]
\begin{document}
\title{\textbf{\Large Front propagation into unstable states for periodic monotone reaction-diffusion systems}}
\author{
Liangliang Deng\textsuperscript{a,}\thanks{Corresponding author.},~
Arnaud Ducrot\textsuperscript{b},~and
Quentin Griette\textsuperscript{b}
\\
{\small \textsuperscript{a}
School of Science and Engineering, The Chinese University of Hong Kong,}\\
{\small Shenzhen, Guangdong 518172, P.R. China}\\
{\small \& School of Mathematics Sciences, University of Science and Technology of China,}\\
{\small Hefei, Anhui 230026, P.R. China,}\\
{\small \textsuperscript{b}
Universit\'{e} Le Havre Normandie, Normandie Univ., } \\
{\small LMAH UR 3821, 76600 Le Havre, France.}
}
\maketitle
\blfootnote{E-mail addresses: dengll@cuhk.edu.cn (L. Deng), arnaud.ducrot@univ-lehavre.fr (A. Ducrot), \\quentin.griette@univ-lehavre.fr (Q. Griette).}
\begin{abstract}
    This paper is concerned with the invasion fronts of spatially periodic monotone reaction-diffusion systems in a multi-dimensional setting.
    We study the pulsating travelling waves that connect the trivial equilibrium,
    for which all components of the state variable are identically equal to zero,
    to a uniformly persistent stationary state,
    for which all components are uniformly positive.
    When the trivial equilibrium is linearly unstable,
    we prove that any pulsating wave travels at a speed no less than that of the linearized system at the equilibrium, namely the minimal speed is linearly determined.
    It is also shown that pulsating waves are strictly monotonic in time whenever the nonlinearity is subhomogeneous.
    Beyond these general qualitative properties,
    the main focus of the paper is to derive sufficient conditions for
    the existence and nonexistence of pulsating waves propagating
    in any given direction.
Our proof of the existence part relies upon a new level of understanding of the multi-dimensional pulsating waves observed from a direction-dependent coordinate system.

\textbf{Keywords:}
monotone systems, semilinear parabolic equations, pulsating waves,
rational direction of propagation.

\textbf{AMS Subject Classification (2020):}
35C07; 35K40, 35K57.

\end{abstract}
\section{Introduction and main results}
We consider  the following quasimonotone system of semilinear parabolic equations
\begin{align}\label{monotone-system}
  \frac{\partial u}{\partial t}+Lu=f(x,u),~~~(t,x)\in\mathbb{R}\times\mathbb{R}^N,
\end{align}
where $u=(u_1,\ldots,u_d)^T$ for some $d>1$ is the unknown of $t\in\mathbb{R}$ and $x\in\mathbb{R}^N$ for some $N\geq1$,
and $f=(f_1,\ldots,f_d)^T:$ $\mathbb{R}^N\times\mathbb{R}^d\to\mathbb{R}^d$ is a  nonlinear vector-valued function satisfying
\begin{equation}\label{quasimonotone-condition}
  \forall u\in\Sigma\subseteq\mathbb{R}^d,~~~~
  \frac{\partial f_i}{\partial u_j}(\cdot,u)\geq0~~
  \text{for all $\,i,j\in\{1,\ldots,d\}$ whenever $i\neq j$}.
\end{equation}
The symbol $L:={\rm diag}(L^1,\ldots,L^d)$ in \eqref{monotone-system} denotes a diagonal matrix  of second-order elliptic operators being either in divergence form
\begin{align}\label{divergence}
L^i(x)u_i:=-\nabla\cdot(A^i(x)\nabla u_i)+q^i(x)\cdot\nabla u_i,
~~i=1,\ldots,d,
\end{align}
or in nondivergence form
\begin{align}\label{nondivergence}
L^i(x)u_i:=-\tr(A^i(x)D^2u_i)+
q^i(x)\cdot\nabla u_i,~~i=1,\ldots,d,
\end{align}
where $\nabla$, $\nabla\cdot$, and $D^2$ denote the gradient, divergence, and Hessian with respect to $x\in\mathbb{R}^N$, respectively.
Such types of parabolic systems are also referred to as
\emph{(quasi-)monotone} or \emph{cooperative} reaction-diffusion systems.
In this regard, we refer to \cite{murray,volpert2014elliptic} for many aspects of modelling biomedical and ecological applications.
An extensive review of earlier mathematical theories is provided in the monograph by Smith \cite[Chap.7]{Smith1995monotone}.

As is well known, the large time behaviour of the homogeneous Fisher-KPP equation with compactly supported initial data is governed by travelling wave solutions \cite{fisher1937thewave,kolmogorov1937study}.
Such solutions are a special class of entire solutions to nonlinear diffusion equations.
These two pioneering papers have led to extensive research on the analysis of front propagation phenomena in reaction-diffusion equations and their systems.
As far as travelling waves for systems are concerned, we refer to Volpert et al. \cite{3V-book,volpert2014elliptic} for a comprehensive introduction and numerous self-contained treatments, and to Ogiwara and Matano \cite{ogiwara1998monotonicity,ogiwara1999stability} for some qualitative analyses.
Although a great deal of literature is devoted to homogeneous equations,
the notion of travelling waves has been extended to non-autonomous heterogeneous equations over the past two decades,
see Berestycki and Hamel \cite{berestycki2007generalized} for a systematic survey.
In this work, we shall study travelling waves for the monotone system \eqref{monotone-system} with spatially periodic heterogeneities in arbitrary dimensions.

\subsection{Setting of the problem}\label{setting-assup}
Before formulating the problem,
let us fix some notations to prevent any future confusion.
For any two vectors $u$, $v\in\mathbb{R}^d$,
we write $u\leq v$ (resp. $u\ll v$) to mean $u_i\leq v_i$ (resp. $u_i<v_i$)
for all $i\in\{1,\ldots,d\}$,
while $u<v$ if $u\leq v$ and $u_i<v_i$ for some $i$.
The maximum between the two is understood componentwise, i.e.
$\max(u,v)=(\max(u_1,v_1),\ldots,\max(u_d,v_d))^T$.
The minimum is defined similarly.
With this definition, the maximum between two vectors coincides with the least upper bound, and the minimum is the greatest lower bound.

We
denote the maximum (or uniform) norm of a vector $u\in\mathbb{R}^d$ by
$$
|u|_\infty:=\max_{1\leq i\leq d}(|u_i|)=\lim_{p\to\infty}|u|_p
$$
where $|u|_p:=(\sum_{i=1}^{d}|u_i|^p)^{1/p}$ with $p\geq1$ is the $p$-norm and by convention $|\cdot|:=|\cdot|_2$ denotes the usual Euclidean norm.
The function spaces $C^{1+{\alpha}/{2},2+\alpha}(\mathbb{R}\times\mathbb{R}^N)^d$ with $\alpha\geq0$ and $L^{p}(\mathbb{R}\times\mathbb{R}^N)^d$ with $p\geq1$
are equipped  with the following norms:
$$
\|w\|_{C^{1+{\alpha}/{2},2+\alpha}(\mathbb{R}\times\mathbb{R}^N)^d}
=\max_{1\leq i\leq d}\|w_i\|_{C^{1+{\alpha}/{2},2+\alpha}
(\mathbb{R}\times\mathbb{R}^N)},~~
\|w\|_{L^{p}(\mathbb{R}\times\mathbb{R}^N)^d}=
\big|(\|w_i\|_{L^{p}(\mathbb{R}\times\mathbb{R}^N)})_{1\leq i\leq d}\big|_p.
$$
Let $BUC(\mathbb{R}\times\mathbb{R}^N)^d$ be the Banach space of all bounded and uniformly continuous functions from $\mathbb{R}\times\mathbb{R}^N$ to $\mathbb{R}^d$ endowed with the usual supremum norm
$$
\|u\|_\infty=\max_{1\leq i\leq d}\sup_{(t,x)\in\mathbb{R}\times\mathbb{R}^N}|u_i(t,x)|.
$$
With a slight abuse of notation, let $\bm{1}:=(1,\ldots,1)^T$, $\bm{0}:=(0,\ldots,0)^T\in\mathbb{R}^d$ for future convenience.
A vector $u\in\mathbb{R}^d$ is said to be nonnegative, positive, and strictly positive, if it satisfies $u\geq\bm{0}$, $u>\bm{0}$, and $u\gg\bm{0}$, respectively.

Let us now elaborate on the general hypotheses.
Denote by $\mathbb{T}^{N}=(\mathbb{R}/\mathbb{Z})^N$ the $N$-dimensional unit torus.
We assume that the coefficients involved in $L$ given by \eqref{divergence} or \eqref{nondivergence} satisfy:
\begin{enumerate}[leftmargin=*, label=(C)]
  \item\label{item:coefficient} The matrix fields $A^i:\mathbb{T}^{N}\to\mathcal{S}_{N}(\mathbb{R})$ (the space of real symmetric $N\times N$ matrices) with $A^i(x)=(a^i_{\ell m}(x))_{1\leq \ell,m \leq N}$ are
      of class $C^{1,\alpha}$ for some $\alpha>0$,
      and uniformly elliptic in the sense that there exist positive constants $\underline{\gamma}\leq\overline{\gamma}$ such that %without loss of generality,
     \begin{equation}\label{elliptic-condition}
      \forall(x,\xi)\in\mathbb{T}^{N}\times\mathbb{R}^{N},~~
      \underline{\gamma}|\xi|^2\leq\xi^{T}A^i(x)\xi\leq\overline{\gamma}|\xi|^2~~
       \text{for all $i\in\{1,\ldots,d\}$},
     \end{equation}
     and the vector fields $q^i:\mathbb{T}^N\to\mathbb{R}^N$ with $q^i(x)=(q^i_{\ell}(x))_{1\leq \ell\leq N}$ are of class $C^\alpha$.
\end{enumerate}
To formulate the conditions for $f(x,u)$, we first recall some concepts from \cite{sweers1992strong,birindelli1999existence}.
\begin{definition}\label{cooperative-irreducible}\upshape
A matrix-valued function $H(z)=\left(h_{ij}(z)\right)_{1\leq i,j\leq d}$ from an arbitrary domain $\Omega$ to $\mathcal{M}_d(\mathbb{R})$ (the space of real $d\times d$ matrices) is said to be:
\begin{itemize}
  \item \emph{cooperative} if $h_{ij}(z)\geq0$ whenever $i\neq j$ for all $i, j\in\{1,\ldots,d\}$ and $z\in\Omega$;
  \item \emph{fully coupled} if for any nontrivial partitions $I, J\subset\{1,\ldots,d\}$, that is, $I\cap J=\varnothing$ and $I\cup J=\{1,\ldots,d\}$, there exist $i_0\in I$ and $j_0\in J$ such that $h_{i_0j_0}(z)\not\equiv0$.
\end{itemize}
\end{definition}
\noindent
Equivalently, the matrix field $H$ with bounded continuous entries is cooperative if and only if
  $$
  \underline{H}:=\Big(\inf_{z\in\Omega}h_{ij}(z)\Big)_{1\leq i, j \leq d}
  \text{ is essentially nonnegative (or called quasi-positive)},
  $$
while it is fully coupled if and only if
$$
  \overline{H}:=\left(\norm{h_{ij}}_\infty\right)_{1\leq i, j \leq d}
  \text{ is irreducible}.
$$
\noindent
We are now in a position to state the set of hypotheses about the nonlinearity.
\begin{enumerate}[label=(N\arabic*)]
  \item\label{item:smoothness} The nonlinearity $f=(f_1,\ldots,f_d)^T: \mathbb{T}^N\times\mathbb{R}^d\to\mathbb{R}^d$
      is continuous and
      of class $C^\alpha$ in $x$, locally uniformly for $u\in\mathbb{R}^d$.
      The partial derivatives $\partial f_i/\partial{u_j}$ exist and are continuous in $\mathbb{T}^N\times\mathbb{R}^d$ for all $i, j\in\{1,\ldots,d\}$.
      Moreover, $f(x,\bm{0})\equiv\bm{0}$ and
      $f$ is of class $C^{1,\beta}(\mathbb{T}^N\times B_\infty(\bm{0},\sigma))$
      for some $\beta\in(0,1]$, where $B_\infty(\bm{0},\sigma):=
      \{u\in\mathbb{R}^d\mid|u|_\infty\leq\sigma\}$ with a small $\sigma>0$.
  \item\label{item:monotonicity} The Jacobian matrix field of $f(x,u)$ with respect to $u$, denoted as $D_uf(x,u)$, is cooperative for all  $(x,u)\in\mathbb{T}^N\times\Sigma$ with some $\Sigma\subseteq\mathbb{R}^d$ in the sense of Definition \ref{cooperative-irreducible}.
  \item\label{item:upper-bound} There exists a constant $\widehat{\eta}>0$ such that
          $f(x,\widehat{\eta}\bm{1})\leq\bm{0}$ for all $x\in\mathbb{T}^N$.
  \item\label{item:irreducibility} The matrix field $D_uf(\cdot,u)$ $:\mathbb{T}^N\to \mathcal{M}_d(\mathbb{R})$ is fully coupled for every $u\in\mathbb{R}^d$ in the sense of Definition \ref{cooperative-irreducible}.
\end{enumerate}

Assumption
\ref{item:monotonicity} is equivalent to condition \eqref{quasimonotone-condition}.
It is also possible to weaken \ref{item:monotonicity} such that $D_uf(\cdot,u)$ is cooperative only for $u\in B_\infty^+(\bm{0},\widehat{\eta}):=\{u\in[0,\infty)^d\mid|u|_\infty\leq\widehat{\eta}\}$.
We mention that a weak condition of \emph{local monotonicity} was formulated by Volpert \cite{3V-book,volpert2014elliptic}.
Under \ref{item:monotonicity}, a sufficient condition for \ref{item:upper-bound} to hold is that for each $i\in\{1,\ldots,d\}$,
$$
  \forall u>\bm{0},~~
  \lim_{l\to+\infty}\frac{f_i(x,lu_1,\ldots,lu_d)}{l|u|_\infty}<0~
  \text{ uniformly for }x\in\mathbb{T}^N.
$$
Assumption \ref{item:irreducibility} means that the system \eqref{monotone-system} links all the unknowns $u_i$ in a strong sense, i.e.
it cannot be separated into several subsystems independent of each other.
Unless otherwise specified, the assumptions of \ref{item:coefficient} and \ref{item:smoothness}-\ref{item:irreducibility} are made throughout the paper.

Let us now introduce the object we want to study.
In the periodic framework, the notion of travelling periodic/pulsating waves was first proposed by Shigesada, Kawasaki and Teramoto \cite{shigesada1986traveling} for a one-dimensional equation of Fisher type, and further generalized by Berestycki and Hamel \cite{berestycki2002front} in a quite general setting,
see also Xin \cite{xin2000front}, Hudson and Zinner \cite{hudson1995existence}, Ogiwara and Matano \cite{ogiwara1998monotonicity,ogiwara1999stability}, Heinze \cite{heinze2001wave} for earlier works.
As far as the monotone system \eqref{monotone-system} is concerned,
such solutions are defined as follows.
\begin{definition}\label{pulsating-wave-def}\upshape
Let $e\in\mathbb{S}^{N-1}:=\{\zeta\in\mathbb{R}^N\mid|\zeta|=1\}$ be given.
A \emph{pulsating travelling wave} propagating in the direction $e$ with the average (or effective) speed $c\neq0$ is a classical bounded entire solution $u\equiv u(t,x)$ to \eqref{monotone-system} which is strictly positive and satisfies
\begin{align}
\forall k\in\mathbb{Z}^{N},~\forall(t,x)\in\mathbb{R}\times\mathbb{R}^N,~~~
&u\left(t+\frac{k\cdot e}{c},x\right)=u(t,x-k),\label{pulsating-condition}\\
\liminf_{x\cdot e\to-\infty}u(t,x)\gg\bm{0},
&~~~\lim_{x\cdot e\to+\infty}u(t,x)=\bm{0},\label{limiting-condition}
\end{align}
where the limits are understood to hold locally uniformly for $t\in\mathbb{R}$ and uniformly with respect to the vectors of
$e^\perp:=\{y\in\mathbb{R}^{N}\mid y\cdot e=0\}$.
\end{definition}
Note that,
since we are interested only in invasion fronts propagating into the unstable state $\bm{0}$,
a persistence-like property is introduced to describe one of the limiting states of pulsating waves,
rather than imposing an \emph{a priori} given steady state for \eqref{monotone-system}.
The goal of this paper is threefold.
When the trivial steady state of \eqref{monotone-system} is unstable, we first establish some qualitative estimates for pulsating waves.
In particular, we prove that any pulsating wave given by Definition \ref{pulsating-wave-def} is strictly monotonic in time if the nonlinearity is subhomogeneous.
The second part is devoted to the existence of pulsating waves.
In the third part, we prove the nonexistence of pulsating waves when the trivial steady state of \eqref{monotone-system} is stable.
Throughout the last two parts, we also discuss the asymptotic persistence (known as the \emph{hair-trigger effect}) and extinction for the Cauchy problem associated with \eqref{monotone-system}.

Let us add some other related works on the pulsating wave-like solutions for spatially periodic reaction-diffusion equations and their systems.
As far as scalar equations are concerned, we refer to \cite{hamel2024propagation,Bages2012,berestycki2005analysis-2,hamel2011uniqueness,ducrot2016multi,
ding2017bistable,ducrot2012terrace,giletti2019pulsating,ding2021admissible}.
Recent advances in non-monotone systems can also be found in \cite{SI-system,griette2021propagation,griette2025front} and the references therein.
From a different perspective, Weinberger \cite{weinberger2002spreading} developed a general theory of spreading properties for a discrete-time recursion governed by an order preserving periodic operator, see \cite{Weinberger1982long} for his earliest work in this direction.
This theory introduces a profound understanding of the relationship between travelling waves and the asymptotic spreading of solutions for a wide class of reaction-diffusion models.
The results in \cite{weinberger2002spreading} have further been extended in the past few years,
see Liang and Zhao \cite{liang2007asymptotic,liang2010spreading} for monostable case.
We emphasize that applying the abstract results of \cite{liang2007asymptotic,liang2010spreading,du2022propagation} to monotone systems
does require stronger assumptions on \eqref{monotone-system} than those made here.
This is also one of our primary motivations for the present work.
A detailed discussion will follow the statement of our main results.
In addition, Fang and Zhao \cite{fang2015bistable} conducted a deep study of travelling waves for bistable monotone semiflows, which includes results on one-dimensional bistable pulsating waves for monotone systems.
Giletti and Rossi \cite{giletti2019pulsating} extended the results of \cite{fang2015bistable,ducrot2012terrace}
to multidimensional scalar equations of both bistable and multistable types.
The aforementioned works contributed with different methodologies for studying travelling waves of monotone systems and generalized several results of Volpert et al. \cite{volpert2014elliptic,3V-book}.
It should also be noted that the presence of multiple steady states in such systems gives rise to numerous other types of travelling waves or pulses  \cite{volpert2014elliptic,3V-book}, including bistable ones,
which are not addressed here within the periodic framework adopted in this work.

Let us also mention that several works have focused on pulsating waves in space-time periodic media.
We refer to Nadin \cite{nadin2009traveling} for scalar equations,
and to Fang et al. \cite{fang2017} for monostable monotone semiflows with one spatial dimension, as well as Du et al. \cite{du2022propagation} with arbitrary space dimensions.
Furthermore, the notion of generalized travelling waves in random media was introduced by Matano \cite{Matano2003} (see also Shen \cite{shen2004traveling}),
and transition fronts under a fairly general setting, by Berestycki and Hamel \cite{berestycki2007generalized}.
The literature on this topic is now extensive and a complete review is beyond the scope of this work.

To conclude this section, let us point out that our motivation for this work is to understand the kind of additional conditions to be imposed on the system \eqref{monotone-system}
which allow front propagation into the unstable trivial equilibrium.

\subsection{Statement of the results}\label{statement-main-results}
To state our main results, several notions related to linear problems
are required.
For this purpose,  for each $e\in\mathbb{S}^{N-1}$ and each $\lambda\in\mathbb{R}$, we first introduce the operators $L^i_{\lambda e} $ as follows:
\begin{equation}\label{modified-operator}
 \begin{split}
&L^i_{\lambda e}\psi:=e^{\lambda e\cdot x}L^i\left(e^{-\lambda e\cdot x}\psi\right)\\
&\qquad~\,=-\nabla\cdot(A^i\nabla\psi)+2\lambda eA^i\nabla\psi
+q^i\cdot\nabla\psi-\left[\lambda^2eA^ie-\lambda\nabla\cdot(A^ie)+\lambda
q^i\cdot e\right]\psi
\end{split}
\end{equation}
or
$$
L^i_{\lambda e}\psi:=-\tr(A^iD^2\psi)+2\lambda eA^i\nabla\psi
+q^i\cdot\nabla\psi-(\lambda^2eA^ie+\lambda q^i\cdot e)\psi,
$$
where we recall that the operators $L^i$ are given by \eqref{divergence} or \eqref{nondivergence}.

Set $L_{\lambda e}:={\rm diag}(L^1_{\lambda e},\ldots,L^d_{\lambda e})$ and
let $k(\lambda,e)$ denote the \emph{periodic principal eigenvalue} of the system of linear elliptic operators
   $$
   \phi\mapsto L_{\lambda e}\phi-D_uf(x,\bm{0})\phi
   $$
posed for $\phi\in C^2(\mathbb{T}^N)^d$ with periodicity conditions.
In other words, $k(\lambda,e)$ is the unique real number such that there exists a function $\phi_{\lambda e}\in C^2(\mathbb{T}^N)^d$ satisfying
\begin{equation}\label{periodic-PE}
 \begin{dcases}
  L_{\lambda e}\phi_{\lambda e}-D_uf(x,\bm{0})\phi_{\lambda e}=
  k(\lambda,e)\phi_{\lambda e}~\text{ in }\mathbb{T}^N,\\
  \phi_{\lambda e}\gg\bm{0}~\text{ in }\mathbb{T}^N,~~~
  \|\phi_{\lambda e}\|_\infty=1.
  \end{dcases}
\end{equation}
When $\lambda=0$, we set
\begin{equation}\label{periodic-threshold}
\lambda^{p}_{1}:=k(0,e).
\end{equation}
Observe from \eqref{modified-operator} that $\lambda^p_{1}$ does not depend on $e$.
In what follows, we say that $\bm{0}$ is a linearly unstable solution of \eqref{monotone-system} if $\lambda^p_{1}<0$, while it is stable if $\lambda^p_{1}\geq0$.

We now introduce the \emph{Dirichlet principal eigenvalue} for the operator $L_{\lambda e}-D_uf(x,\bm{0})$ with $\lambda=0$.
Let $B_R$ be the open ball of $\mathbb{R}^N$ with centre $\bm{0}$ and radius $R$.
For any $R>0$, we consider the unique $\lambda^R_1\in\mathbb{R}$ such that there exists
a function $\varphi^R\in C^2(B_R)^d\cap C^1(\overline{B_R})^d$ which satisfies
\begin{equation}\label{Dirichlet-PE}
 \begin{dcases}
  L\varphi^R-D_uf(x,\bm{0})\varphi^R
  =\lambda^R_1\varphi^R&\text{in }B_R,\\
  \varphi^R=\bm{0}&\text{on }\partial B_R,\\
  \varphi^R\gg\bm{0}~\text{ in }B_R,~~~
  \|\varphi^R\|_\infty=1.
  \end{dcases}
\end{equation}
The existence and uniqueness of the principal eigenpairs follow from the Krein-Rutman theory and the fact that under assumptions \ref{item:monotonicity} and \ref{item:irreducibility}, $D_uf(x,\bm{0})$ is cooperative and fully coupled in the sense of Definition \ref{cooperative-irreducible},
see \cite{sweers1992strong,lam2016asymptotic,birindelli1999existence}.
With the notion of the \emph{generalized principal eigenvalue} for scalar elliptic equations (see \cite{berestycki2007Liouville,berestycki2005analysis-1} for instance),
we also introduce such a quantity for the operator $L-D_uf(x,\bm{0})$ in $\mathbb{R}^N$.
This definition here reads
\begin{equation}\label{generalized-PE}
\lambda_1:=\sup\left\{\lambda\in\mathbb{R}\mid\exists\,\varphi\in
          C^2(\mathbb{R}^N)^d,~\phi\gg\bm{0}\text{ and }
 \big(L-D_uf(x,\bm{0})\big)\varphi\geq\lambda\varphi
 \text{ in }\mathbb{R}^N\right\}.
\end{equation}

Assume now $\lambda^p_{1}<0$.
Then the following quantity turns out to be well defined:
\begin{equation}\label{formula-minimal-speed-inf}
  c^*(e):=\inf_{\lambda>0}\frac{-k(\lambda,e)}{\lambda}.
\end{equation}
Moreover, the infimum in \eqref{formula-minimal-speed-inf} can also be reached on $(0,\infty)$ and there holds
\begin{equation}\label{relations-among-3PEs}
 \lambda^\infty_{1}:=\lim_{R\to+\infty}\lambda^R_1\in\mathbb{R},~~~
 \lambda^p_{1}\leq\lambda^\infty_{1}=\lambda_{1}=
 \max_{\lambda e\in\mathbb{R}^{N}}k(\lambda,e).
\end{equation}
Further properties for the quantities introduced above will be stated in Section \ref{sect:linear-problem}.

With these quantities defined, our main results are stated as the following three theorems.
\begin{theorem}\label{main-result-qualitative-property}
Assume $\lambda^p_{1}<0$. Let $(c,u)$ be a pulsating travelling wave solution to   \eqref{monotone-system}.
Then the wave speed $c$ satisfies
$$
c\geq c^*(e).
$$
If $f$ is further assumed to be subhomogeneous, i.e.
  \begin{equation}\label{subhomogeneity}
  \forall\theta\in(0,1),~
  \forall(x,u)\in\mathbb{T}^N\times[0,\infty)^d,~~
  \theta f(x,u)\leq f(x,\theta u),~~
  \end{equation}
then the function $u=u(t,x)$ is componentwise increasing in $t$ if $c>0$ and decreasing if $c<0$.
Moreover, one has $c\partial_t u(t,x)\gg\bm{0}$ for all $(t,x)\in\mathbb{R}\times\mathbb{R}^N$.
\end{theorem}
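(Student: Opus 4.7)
The two claims call for different techniques: a linearization and supersolution argument for the speed lower bound, and a subhomogeneity-based sliding argument together with the strong maximum principle for the monotonicity.

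\textbf{Speed lower bound.} I would argue by contradiction. Assume $c < c^*(e) = \inf_{\lambda > 0}(-k(\lambda, e)/\lambda)$; then some $\lambda^* > 0$ satisfies $\delta := -(c\lambda^* + k(\lambda^*, e)) > 0$. Using the principal eigenfunction $\phi_{\lambda^* e}$ from \eqref{periodic-PE}, introduce
\begin{equation*}
\Phi(t, x) := e^{-\lambda^*(x \cdot e - ct)}\phi_{\lambda^* e}(x),
\end{equation*}
which, thanks to the definition \eqref{modified-operator} of $L_{\lambda e}$ and the eigenvalue identity \eqref{periodic-PE}, satisfies $(\partial_t + L - D_u f(x, \bm{0}))\Phi = -\delta\, \Phi \ll \bm{0}$. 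Hence $\Phi$ is a strict supersolution of the linearized system with a definite margin. Since $u \to \bm{0}$ uniformly as $x \cdot e \to +\infty$, the $C^{1,\beta}$ smoothness of $f$ near $\bm{0}$ in \ref{item:smoothness} yields $f(x, u) \leq D_u f(x, \bm{0}) u + \epsilon u$ componentwise on a half-space $\{x \cdot e \geq R_\epsilon\}$. Combining this with parabolic regularity and the pulsating structure \eqref{pulsating-condition}, I would first show that the ratio $u(t, x)\, e^{\lambda^*(x \cdot e - ct)}/\phi_{\lambda^* e}(x)$ is bounded, and then slide $\epsilon\Phi$ upward from below against $u$: taking $\epsilon$ minimal so that $\epsilon\Phi \geq u$ still holds, equality must be attained at some $(t_0, x_0)$, and the strong maximum principle applied to the cooperative, fully coupled linear system satisfied by $\epsilon\Phi - u$ (via \ref{item:monotonicity} and \ref{item:irreducibility}) delivers the desired contradiction.

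\textbf{Monotonicity under subhomogeneity.} Without loss of generality assume $c > 0$ (the case $c < 0$ is symmetric via $t \mapsto -t$). For $h > 0$, set $v_h(t, x) := u(t + h, x)$, which is again a pulsating wave of speed $c$ sharing the same limits as $u$. The central observation is that by \eqref{subhomogeneity}, for every $\theta \in (0, 1)$ the function $\theta u$ is a subsolution of \eqref{monotone-system}:
\begin{equation*}
\partial_t(\theta u) + L(\theta u) = \theta f(x, u) \leq f(x, \theta u).
\end{equation*}
Define $\Theta_h := \sup\{\theta \in (0, 1] : \theta u \leq v_h \text{ on } \mathbb{R} \times \mathbb{R}^N\}$. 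A parabolic Harnack-type estimate combined with the common behavior of $u$ and $v_h$ at $x \cdot e \to \pm \infty$ shows that $\Theta_h > 0$. The crux is to establish $\Theta_h = 1$. If $\Theta_h < 1$, then $W := v_h - \Theta_h u \geq \bm{0}$ satisfies $\partial_t W + L W \geq B(t, x) W$ for an integrated Jacobian $B(t,x)$ that is cooperative and fully coupled. Either $W$ touches $\bm{0}$ at an interior point, in which case the strong maximum principle forces $W \equiv \bm{0}$, i.e.\ $v_h \equiv \Theta_h u$, contradicting the fact that $v_h$ and $u$ share the same uniform persistent lower bound at $-\infty$ unless $\Theta_h = 1$; or the infimum of $W$ is attained only asymptotically, in which case the matched exponential asymptotics at $+\infty$ and the persistent lower bounds at $-\infty$ allow one to slightly enlarge $\Theta_h$, contradicting its maximality.

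With $v_h \geq u$ established for every $h > 0$, dividing by $h$ and letting $h \to 0^+$ yields $\partial_t u \geq \bm{0}$. Differentiating \eqref{monotone-system} in $t$, the function $w := \partial_t u$ solves the linear cooperative system $\partial_t w + L w = D_u f(x, u)\, w$. The case $w \equiv \bm{0}$ is excluded because it would force $u$ to be $t$-independent and hence, via \eqref{pulsating-condition}, $\mathbb{Z}^N$-periodic in $x$, contradicting \eqref{limiting-condition}. The strong maximum principle for cooperative, fully coupled parabolic systems (via \ref{item:monotonicity} and \ref{item:irreducibility}) then promotes $w \geq \bm{0}$ to $w \gg \bm{0}$ throughout $\mathbb{R} \times \mathbb{R}^N$, yielding $c\,\partial_t u \gg \bm{0}$. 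The main obstacle I foresee is proving $\Theta_h = 1$: since the domain is non-compact and both $u$ and $v_h$ vanish at $+\infty$, the infimum of $W$ need not be attained at an interior point, so one must carefully exploit the pulsating structure \eqref{pulsating-condition}, which confines the profile to a compact orbit modulo the $\mathbb{Z}^N$-action, together with sharp exponential asymptotics at infinity, to rule out escape to infinity.
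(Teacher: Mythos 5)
Your argument for the lower bound $c\geq c^*(e)$ contains a sign error that is fatal to the scheme. Under the contradiction hypothesis $c<c^*(e)$ one has $c\lambda^*+k(\lambda^*,e)<0$ for \emph{every} $\lambda^*>0$, so $\Phi=e^{-\lambda^*(x\cdot e-ct)}\phi_{\lambda^*e}$ satisfies $(\partial_t+L-D_uf(x,\bm{0}))\Phi=-\delta\Phi\ll\bm{0}$: it is a strict \emph{sub}solution of the linearized system, not a supersolution. Consequently $w=\epsilon\Phi-u$ is, in the region where $u$ is small, a strict subsolution of a cooperative linear system, and a nonnegative subsolution may perfectly well attain an interior zero minimum; the strong maximum principle yields no contradiction in that direction (it would only if $\Phi$ were a supersolution, i.e. $c\lambda^*+k(\lambda^*,e)\geq0$, which is exactly what you assume fails). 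Moreover, the preliminary claim that $u\,e^{\lambda^*(x\cdot e-ct)}/\phi_{\lambda^*e}$ is bounded is false whenever $\lambda^*$ exceeds the actual exponential decay rate of the wave, and proving it otherwise amounts to identifying that decay rate, which is the heart of the matter. The paper's route (Proposition \ref{lower-bound-speed}, following Hamel) is different: using the Harnack inequality of Theorem \ref{Har-ineq} and a renormalization limit at the leading edge, it shows that the decay rate $\underline{\mu}$ of $U(s,x)=u((x\cdot e-s)/c,x)$ exists, is positive, and solves $k(\underline{\mu},e)+c\underline{\mu}=0$; since \eqref{char-equa} has a positive root only for $c\geq c^*(e)$, the bound follows. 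To salvage a comparison argument you would instead need a genuinely nonlinear, compactly supported subsolution spreading at a speed in $(c,c^*(e))$ (a hair-trigger construction), not the planar barrier $\Phi$.

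For the monotonicity part your sliding scheme is essentially the paper's (your $\Theta_h$ plays the role of $\gamma^*$ in Proposition \ref{monotonocity-proof}, and the subsolution property of $\theta u$ from \eqref{subhomogeneity} is the correct key), but the obstacle you flag at the end is precisely the step you have not closed, and the paper closes it by \emph{using the first part}: Proposition \ref{lower-bound-speed} gives $\liminf_{s\to+\infty}(-\partial_sU_i/U_i)>0$, hence $\partial_sU\ll\bm{0}$ for $s\geq\bar{s}$, so the ordering $U(s+\sigma,\cdot)\leq U(s,\cdot)$ holds automatically on a right half-line and the supremum defining $\gamma^*$ is taken only over $(-\infty,R]\times\mathbb{T}^N$, where $U$ is uniformly positive and the contact point is genuinely attained (no escape to $+\infty$). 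Without that input, "matched exponential asymptotics at $+\infty$" is not available and the argument is incomplete exactly where you say it is. Two smaller points: the case $W\equiv\bm{0}$, i.e. $v_h\equiv\Theta_hu$ with $\Theta_h<1$, is excluded not by the persistence bound alone but by iterating the identity along the pulsating relation (or, as in the paper, by computing $\liminf_{t\to\pm\infty}w_{i_0}=(1-\gamma^*)\epsilon>0$); and your final steps ($\partial_tu\geq\bm{0}$, exclusion of $\partial_tu\equiv\bm{0}$, and upgrade to $\partial_tu\gg\bm{0}$ via the fully coupled strong maximum principle) do match the paper.
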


\begin{theorem}\label{main-result-existence}
Assume $\lambda_{1}<0$.
Suppose the nonlinearity $f$ satisfies
  \begin{equation}\label{sublinearity}
  \forall(x,u)\in\mathbb{T}^N\times[0,\infty)^d,~~~
  f(x,u)\leq D_uf(x,\bm{0})u.
  \end{equation}
Then, for every $e\in\mathbb{S}^{N-1}$ and for all $c\geq c^*(e)$,
the system \eqref{monotone-system} admits a pulsating travelling wave in direction $e$ with speed $c$.
\end{theorem}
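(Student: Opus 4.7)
The plan is to use a super-/sub-solution method in a moving frame, followed by an approximation argument at the critical speed. I look for the wave in the form $u(t,x) = U(x\cdot e - ct, x)$ with $U(s,\cdot)$ periodic in $x\in\mathbb{T}^N$, so the pulsating condition \eqref{pulsating-condition} is automatically satisfied; the problem reduces to solving a degenerate elliptic system on $\mathbb{R}\times\mathbb{T}^N$ with $U(+\infty,x)=\bm{0}$ and $\liminf_{s\to-\infty}U(s,x)\gg\bm{0}$.

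For the supercritical case $c>c^*(e)$, by \eqref{formula-minimal-speed-inf} and \eqref{relations-among-3PEs} the equation $c=-k(\alpha,e)/\alpha$ has two positive roots $\alpha_1<\alpha_2$. Step 1 (supersolution): $V_{\alpha_1}(t,x):=e^{-\alpha_1(x\cdot e-ct)}\phi_{\alpha_1 e}(x)$ is an entire solution of the linearised system $\partial_t u+Lu=D_uf(x,\bm{0})u$, hence a supersolution of \eqref{monotone-system} under \eqref{sublinearity}; its componentwise minimum with $\widehat{\eta}\bm{1}$ (a supersolution by \ref{item:upper-bound}) is again a supersolution by cooperativity \ref{item:monotonicity}. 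Step 2 (subsolution): for $\mu\in\bigl(\alpha_1,\min((1+\beta)\alpha_1,\alpha_2)\bigr)$ one has $\delta:=c\mu+k(\mu,e)>0$, and the higher-order residual $f(x,u)-D_uf(x,\bm{0})u=O(|u|^{1+\beta})$ from \ref{item:smoothness} can be absorbed, making $\underline{u}(t,x):=\max\bigl(V_{\alpha_1}(t,x)-A\,e^{-\mu(x\cdot e-ct)}\phi_{\mu e}(x),\,\bm{0}\bigr)$ a non-trivial subsolution for $A$ sufficiently large, with $\underline{u}\leq\overline{u}:=\min(V_{\alpha_1},\widehat{\eta}\bm{1})$. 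Step 3 (existence at supercritical speeds): solve the pulsating wave problem on bounded cylinders via monotone iteration between $\underline{u}$ and $\overline{u}$, exploiting cooperativity, and pass to the limit using parabolic Schauder estimates; the sandwich $\underline{u}\leq u\leq\overline{u}$ gives the desired asymptotic behaviour as $x\cdot e\to+\infty$, while the persistence $\liminf_{x\cdot e\to-\infty} u\gg\bm{0}$ is obtained by combining the lower bound $u\geq\underline{u}$ (which is uniformly positive in the region $x\cdot e-ct\leq -R$ for $R$ large) with the strong maximum principle and irreducibility \ref{item:irreducibility}.

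For the critical case $c=c^*(e)$, I would take $c_n\downarrow c^*(e)$ with associated waves $u_n$ from the previous steps, normalised by a suitable translation condition (e.g.\ $\max_{x\in[0,1]^N}|u_n(0,x)|_\infty=\sigma$) to prevent collapse. Parabolic regularity yields a subsequential limit $u^*$ satisfying the pulsating condition at speed $c^*(e)$; the normalisation precludes $u^*\equiv\bm{0}$, while the KPP-type supersolution $V_{\alpha_c}$ associated with the unique minimiser $\alpha_c$ of $\lambda\mapsto-k(\lambda,e)/\lambda$ forces $u^*\to\bm{0}$ as $x\cdot e\to+\infty$; a spreading/linear instability argument based on $\lambda_1<0$ then yields persistence behind the front.

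The main obstacle I expect is the critical-speed limit: guaranteeing that $u^*$ does not collapse to a constant stationary state and that the $\liminf\gg\bm{0}$ condition at $-\infty$ survives the limit. This is made especially delicate by the multi-dimensional setting---the equation has no translation invariance along $e$ when $e$ is not a rational direction, so the relevant compactness has to be set up in the right ``direction-dependent coordinate system'' alluded to in the abstract. A secondary technical subtlety is properly handling the linearised spreading estimate in that framework to secure the persistence as $x\cdot e\to-\infty$ at the critical speed, where the exponential gap between the two roots $\alpha_1<\alpha_2$ closes.
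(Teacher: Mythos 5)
Your overall architecture (supersolution $e^{-\alpha_1(x\cdot e-ct)}\phi_{\alpha_1 e}$ capped by $\widehat\eta\bm{1}$, subsolution obtained by subtracting a faster-decaying exponential with $\mu\in(\alpha_1,\min((1+\beta)\alpha_1,\alpha_2))$, truncation and passage to the limit) matches the paper's Lemma \ref{subsol-construction-larger-speeds} and the supersolution \eqref{supersol-form-any-speed}. But there are two genuine gaps. First, your persistence argument at $-\infty$ is based on a false claim: the function $\underline{u}=\max\bigl(V_{\alpha_1}-Ae^{-\mu(x\cdot e-ct)}\phi_{\mu e},\bm{0}\bigr)$ is \emph{not} uniformly positive on $\{x\cdot e-ct\leq -R\}$; since $\mu>\alpha_1$, the subtracted term dominates as $x\cdot e-ct\to-\infty$, so $\underline{u}\equiv\bm{0}$ there (it is positive only on a half-line $x\cdot e-ct\geq s_A$ and tends to $\bm{0}$ at $+\infty$). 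The strong maximum principle then gives only pointwise positivity, not $\liminf_{x\cdot e\to-\infty}u\gg\bm{0}$. The paper closes this hole in two distinct ways: for rational directions it uses the lower barrier $\sup_{n\in\mathbb{N}}\omega(t,x+n\tau_1\zeta)$ (a supremum over a lattice of translates of your $\underline{u}$, which \emph{is} uniformly positive at $-\infty$ and is shown to be preserved by the evolution in Claim \ref{invariance}); for general directions, where $\tau_1\to0$ and this barrier degenerates, it proves and invokes the hair-trigger effect (Proposition \ref{local-uniform-persistence}), which requires $\lambda_1<0$ — note the theorem assumes $\lambda_1<0$, not merely $\lambda_1^p<0$, precisely for this step. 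Without one of these ingredients your construction could converge to a front-like object violating \eqref{limiting-condition} at $-\infty$.

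Second, the reduction to a profile $U(s,x)$ on $\mathbb{R}\times\mathbb{T}^N$ produces a \emph{degenerate} elliptic system (the second-order part has rank $N$ on an $(N+1)$-dimensional domain), and ``monotone iteration on bounded cylinders'' plus ``parabolic Schauder estimates'' does not resolve this: there is no elliptic regularity or compactness for the profile equation without a regularization (as in Berestycki--Hamel) or a reformulation. The paper instead never solves the profile equation; it works with the genuine parabolic evolution, rewrites the pulsating condition in a direction-dependent coordinate system for $\zeta\in\mathbb{Q}^N\cap\mathbb{S}^{N-1}$, characterizes the wave as a fixed point of the time-$\tau_1/c$ solution map composed with a shift by $\tau_1$ in a semi-infinite cylinder with moving Neumann boundary, and applies Schauder's fixed-point theorem; general directions are then recovered by rational approximation. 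Finally, your treatment of $c=c^*(e)$ by letting $c_n\downarrow c^*(e)$ with a normalization is a legitimately different route from the paper's, which builds explicit critical barriers involving $\partial_\lambda v^*|_{\lambda=\lambda^*}$ (i.e.\ terms of the form $(x\cdot e-c^*t)e^{-\lambda^*(x\cdot e-c^*t)}$, Lemmas \ref{subsol-construction-critical}--\ref{supersol-construction-critical}) and treats both speeds in parallel; you correctly identify that preventing collapse and preserving both limit states in the $c_n\downarrow c^*$ limit is the hard part, but you do not supply the argument, and without monotonicity (which is unavailable under \eqref{sublinearity} alone) the usual normalization-plus-sliding machinery is not at your disposal.
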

\begin{theorem}\label{main-result-nonexistence}
Assume $\lambda^p_{1}\geq0$.
Let condition \eqref{sublinearity} be fulfilled, and
assume further that the inequality \eqref{sublinearity} is strict whenever $u\gg\bm{0}$, i.e.
\begin{equation}\label{strict-sublinear-condition}
  \forall(x,u)\in\mathbb{T}^N\times(0,\infty)^d,~~
  f(x,u)<D_uf(x,\bm{0})u.
\end{equation}
Then there exists no pulsating travelling wave of \eqref{monotone-system} in any given direction.
\end{theorem}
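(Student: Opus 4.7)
The plan is a sliding argument against the periodic principal eigenfunction $\phi_{0e}$ associated with \eqref{periodic-PE} at $\lambda=0$, whose eigenvalue is $k(0,e)=\lambda_1^p\geq 0$. Since $\phi_{0e}\in C^2(\mathbb{T}^N)^d$ with $\phi_{0e}\gg\bm{0}$ and $L\phi_{0e}-D_uf(x,\bm{0})\phi_{0e}=\lambda_1^p\phi_{0e}\geq\bm{0}$, the strict sublinearity \eqref{strict-sublinear-condition} gives, for every $\kappa>0$,
$$
L(\kappa\phi_{0e})-f(x,\kappa\phi_{0e})>L(\kappa\phi_{0e})-D_uf(x,\bm{0})(\kappa\phi_{0e})=\kappa\lambda_1^p\phi_{0e}\geq\bm{0},
$$
so $\kappa\phi_{0e}$ is a strict stationary supersolution of \eqref{monotone-system}. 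I argue by contradiction: suppose a pulsating travelling wave $(c,u)$ exists in some direction $e\in\mathbb{S}^{N-1}$, which by the strong maximum principle and \ref{item:irreducibility} satisfies $\bm{0}\ll u\leq\widehat{\eta}\bm{1}$ on $\mathbb{R}\times\mathbb{R}^N$.

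Introduce the sliding threshold
$$
\kappa^*:=\inf\big\{\kappa>0 : \kappa\phi_{0e}(x)\geq u(t,x)\text{ for all }(t,x)\in\mathbb{R}\times\mathbb{R}^N\big\}.
$$
Boundedness of $u$ and $\phi_{0e}\gg\bm{0}$ periodic ensure $\kappa^*<\infty$, while the lower bound $\liminf_{x\cdot e\to-\infty}u\gg\bm{0}$ (evaluated at any fixed $t$) forces $\kappa^*>0$. By the minimality of $\kappa^*$, there exist sequences $(t_n,x_n)$ and an index $i$ with $\kappa^*\phi_{0e,i}(x_n)-u_i(t_n,x_n)\to 0$. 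Using the pulsating profile representation $u(t,x)=U(x\cdot e-ct,x)$ with $U$ being $\mathbb{Z}^N$-periodic in its second variable, I rule out $s_n:=x_n\cdot e-ct_n\to+\infty$ (since then $u(t_n,x_n)\to\bm{0}$ uniformly while $\kappa^*\phi_{0e,i}(x_n)$ remains bounded below by $\kappa^*\min\phi_{0e,i}>0$); in every other case, the periodicity of $\phi_{0e}$ and of $U$ permits shifting $x_n$ into $[0,1]^N$ via some $k_n\in\mathbb{Z}^N$ and setting $\tilde t_n:=t_n-k_n\cdot e/c$, and standard parabolic estimates furnish a locally uniform sub-sequential limit $u_\infty$, an entire solution of \eqref{monotone-system} for which $w_\infty:=\kappa^*\phi_{0e}-u_\infty\geq\bm{0}$ and $w_{\infty,i}(0,z_\infty)=0$ at some point $(0,z_\infty)\in\mathbb{R}\times\mathbb{R}^N$.

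The concluding step is a strong-maximum-principle contradiction. By the strong maximum principle for cooperative fully coupled parabolic systems applied to $u_\infty\geq\bm{0}$, either $u_\infty\equiv\bm{0}$ or $u_\infty\gg\bm{0}$; the former is excluded because it would make $w_{\infty,i}(0,z_\infty)=\kappa^*\phi_{0e,i}(z_\infty)>0$. Thus $u_\infty\gg\bm{0}$ on $\mathbb{R}\times\mathbb{R}^N$, and the strict sublinearity \eqref{strict-sublinear-condition} yields, everywhere,
$$
\partial_t w_\infty+Lw_\infty-D_uf(x,\bm{0})w_\infty=\kappa^*\lambda_1^p\phi_{0e}+\big(D_uf(x,\bm{0})u_\infty-f(x,u_\infty)\big)>\bm{0}.
$$
However, $(0,z_\infty)$ is an interior global minimum of $w_{\infty,i}$ with value $0$, hence $\partial_tw_{\infty,i}=0$, $\nabla w_{\infty,i}=0$ and $D^2w_{\infty,i}\geq 0$ there; combined with the cooperativity of $D_uf(\cdot,\bm{0})$ (nonnegative off-diagonal entries) and $w_\infty\geq\bm{0}$, this forces the $i$-th component of the left-hand side to be $\leq 0$ at $(0,z_\infty)$, contradicting the strict positivity on the right. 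The main technical obstacle is the extraction step: because the persistence lower bound as $x\cdot e\to-\infty$ is only locally uniform in $t$, the passage to the limit must be carried out through the profile $U$ so that the contact sequence can be brought into a compact region without losing the touching property.
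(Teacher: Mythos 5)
Your argument is correct in outline but follows a genuinely different route from the paper. The paper first proves a global extinction result for the Cauchy problem (Proposition \ref{extinction}): starting from the constant $\max(\widehat{\eta},\|\psi\|_\infty)\bm{1}$, the solution decreases to a periodic stationary state $U$, and a sliding argument against $\phi_0$ in the \emph{elliptic} setting shows $U=\bm{0}$; the nonexistence of waves then follows by comparing the wave with $u(t-s,\cdot;\widehat{\eta}\bm{1})$ and letting $s\to-\infty$. You instead slide $\kappa\phi_{0e}$ directly down onto the wave itself, using the pulsating relation \eqref{pulsating-condition} to bring the near-contact points into $[0,1]^N$ and a parabolic limiting argument to produce an actual touching point. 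Your approach avoids the detour through the Cauchy problem and the monotone-in-time limit, at the cost of the compactness/recentring step you correctly identify as the technical crux; the paper's approach yields the extinction statement as a by-product, which is of independent interest. Your handling of the escape to $x\cdot e-ct\to+\infty$ and of the dichotomy $u_\infty\equiv\bm{0}$ versus $u_\infty\gg\bm{0}$ is sound.

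One step needs repair. Under the paper's ordering convention, $f(x,u)<D_uf(x,\bm{0})u$ in \eqref{strict-sublinear-condition} means the inequality is strict in \emph{at least one} component, not componentwise; this is why the paper's own Proposition \ref{extinction} writes ``$\geq(\not\equiv)\,\bm{0}$''. Consequently your pointwise contradiction at $(0,z_\infty)$ fails if the strictly positive component of $\kappa^*\lambda_1^p\phi_{0e}+D_uf(x,\bm{0})u_\infty-f(x,u_\infty)$ is some $j\neq i$: the $i$-th component of the right-hand side may vanish there. The fix is the one the paper uses elsewhere: from the $i$-th differential inequality, $w_{\infty,i}\geq0$ is a supersolution of a scalar parabolic equation vanishing at an interior point, so the strong maximum principle gives $w_{\infty,i}\equiv0$ for $t\leq0$; full coupling (assumption \ref{item:irreducibility}) then forces $w_\infty\equiv\bm{0}$ there, i.e. $u_\infty\equiv\kappa^*\phi_{0e}\gg\bm{0}$, and substituting back into the identity for $w_\infty$ yields $\bm{0}>\bm{0}$ in the sense of the partial order, which is the desired contradiction. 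With that adjustment the proof is complete.
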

\begin{remark}\label{stronger-conditions}
The set of our hypotheses \ref{item:smoothness}-\ref{item:irreducibility} together with \eqref{sublinearity} is actually an analogue of (F1)-(F5) formulated by Barles et al. \cite{barles1990wave}, in which the authors investigated
front propagation in homogeneous media by
developing a viscosity solution approach for Hamilton-Jacobi equations.
In fact, under assumptions \ref{item:smoothness} and \eqref{subhomogeneity}, the sublinear condition \eqref{sublinearity} holds.
Furthermore, if $f$ is strictly subhomogeneous, i.e.
\begin{equation}\label{strict-subhomogeneity}
   \forall\theta\in(0,1),~
  \forall(x,u)\in\mathbb{T}^N\times(0,\infty)^d,~~
  \theta f(x,u)<f(x,\theta u),~~
\end{equation}
then the strictly sublinear condition \eqref{strict-sublinear-condition} is also fulfilled.
\end{remark}
\begin{remark}\label{two-principal-eigenvalues}
  If $\lambda_{1}<0$, it follows from \eqref{formula-minimal-speed-inf} and \eqref{relations-among-3PEs} that $c^*(e)>0$ for every $e\in\mathbb{S}^{N-1}$.
  Furthermore, if $\lambda^p_{1}=\lambda_{1}$, then the first two theorems show that the system \eqref{monotone-system} admits a pulsating wave if and only if $c\geq c^*(e)$.
 Nevertheless, these two quantities are not necessarily equal.
Depending on the assumptions on the linearized operator, a strict inequality $\lambda^p_{1}<\lambda_{1}$ is possible,
see Remark \ref{sufficient-condition} below.
\end{remark}
\begin{remark}\label{null-entire-solution}
It is worth pointing out that the proof of the linear determinacy for the minimal wave speed does not require the limiting condition of pulsating waves at $x\cdot e=-\infty$ in \eqref{limiting-condition}.
Moreover, the existence of pulsating waves is guaranteed under assumption \eqref{sublinearity} even if the monotonicity does not hold in general.

Theorem \ref{main-result-nonexistence} asserts that the linearly stable equilibrium precludes invasion by any uniformly positive state.
Indeed, under assumptions of Theorem \ref{main-result-nonexistence},
Proposition \ref{extinction} in Sect.\,\ref{sect:nonexistence} implies that there is no nonnegative bounded entire solution to \eqref{monotone-system} besides the trivial one.
It should also be stressed that condition \eqref{strict-sublinear-condition} is essential to the validity of Theorem \ref{main-result-nonexistence}.
Recall that in the case of scalar reaction-diffusion equations, pulsating waves are known to exist for both ignition-type nonlinearities
(satisfying $\lambda^p_{1}=0$, see \cite{berestycki2002front})
and bistable ones (satisfying $\lambda^p_{1}>0$, see \cite{ducrot2016multi,ding2017bistable}).
\end{remark}

Before proceeding, let us further comment on the above results and recall some earlier works in the literature.
Theorem \ref{main-result-qualitative-property} provides a sufficient condition for pulsating waves to be monotonic in time.
As a consequence, if \eqref{subhomogeneity} holds, then any pulsating wave $(u,c)$ converges, as $t\to+\infty$ if $c>0$ or as $t\to-\infty$ if $c<0$,
to a bounded strictly positive steady state for \eqref{monotone-system},
which can also be verified to be $\mathbb{Z}^N$-periodic because of \eqref{pulsating-condition}.
This observation naturally motivates us to seek pulsating waves that connect the unstable state $\bm{0}$ and a stable steady state $p$ for \eqref{monotone-system}.
To this end, if there exists a minimal periodic steady state $p$ in the class of all strictly positive bounded steady states for \eqref{monotone-system},
and that it is asymptotically stable for the semiflow generated by \eqref{monotone-system} in the set $\{\varphi\in C(\mathbb{T}^N)^d\mid \bm{0}<\varphi\leq p\}$, then the limiting condition \eqref{limiting-condition} could be refined to
\begin{equation}\label{two-limits}
  \lim_{x\cdot e\to-\infty}|u(t,x)-p(x)|_\infty=0,~~~\lim_{x\cdot e\to+\infty}u(t,x)=\bm{0},
\end{equation}
locally uniformly for $t\in\mathbb{R}$ and uniformly in $e^\perp$.
Under such a monostable setting,
the existence of pulsating waves satisfying \eqref{two-limits} is a direct consequence of Weinberger's result in \cite{weinberger2002spreading} for one component case and Liang-Zhao's result in \cite{liang2010spreading} for one-dimensional case.
Nevertheless, besides contributing additional qualitative properties,
the present paper provides a feasible and alternative approach to establishing the existence of pulsating waves for monostable monotone systems.
Furthermore, our work extends some of the results on planar travelling waves in homogeneous media from Volpert et al. \cite{3V-book} and Li et al. \cite{li2005spreading},
as well as those on curved travelling waves in straight cylindrical domains from Volpert \cite{volpert2014elliptic}.

However, it is technically challenging to find some sharp sufficient conditions under which the system \eqref{monotone-system} goes under the above-mentioned framework.
A special case worth noting is that by strengthening \eqref{subhomogeneity} to \eqref{strict-subhomogeneity},
one can apply the theory of strongly monotone (guaranteed here by assumption \ref{item:irreducibility}) and strictly subhomgeneous maps, see \cite[Sect\,2.3]{Zhaobook} for instance, to yield a threshold-type result for the semiflow generated by \eqref{monotone-system} in terms of the stability properties of the trivial equilibrium,
see \cite{huang2022propagation,wang2015pulsating,deng2021propagation} for some examples of such arguments.
This also suggests us to establish the \emph{Liouville-type} results for monotone system \eqref{monotone-system} as preparation for studying invasion dynamics.
Observe that under assumption \eqref{sublinearity}, the system \eqref{monotone-system} shares structural similarities with scalar Fisher-KPP equations.
According to the results of Berestycki et al. \cite{berestycki2005analysis-1,berestycki2007Liouville},
the strong-KPP condition is almost sharp for the uniqueness and stability of bounded positive steady states for scalar semilinear parabolic equations in the whole space.
In fact, under assumptions $\lambda^p_{1}<0$ and \eqref{sublinearity},
one can readily prove that there exists at least a strictly positive periodic steady states for the monotone system \eqref{monotone-system}.
But the uniqueness (or even the minimality) and the stability within the class of all nonnegative bounded steady states, namely without assuming a priori that any steady state for \eqref{monotone-system} is uniformly positive and periodic,
is far from straightforward.

On the other hand,
in order to establish monotonicity results, we assume in Theorem \ref{main-result-qualitative-property} that $f$ is subhomogeneous in the sense of \eqref{subhomogeneity}.
This condition plays a role similar to that of the strong-KPP assumption made by Berestycki et al. \cite{berestycki2005analysis-2} for scalar equations.
But a weaker assumption was formulated by Hamel \cite{hamel2008qualitative},
which also guarantees the monotonicity of monostable pulsating waves.
A generalization to monotone systems becomes then natural yet more complicated.
To sum up,
it remains an interesting and open problems to derive sharp conditions that yield Liouville-type results for the stationary system of \eqref{monotone-system} in the whole space.
For these reasons, we will not here formulate any assertion leading to \eqref{two-limits} but adopt the weaker condition \eqref{limiting-condition}.

Let us finally mention that Du et al. \cite{du2022on,du2022propagation} extended the results of \cite{yu2017propagation,fang2017} concerning a class of monostable monotone systems to a multi-dimensional periodic medium in which the system admits equilibria lying on the boundary of the positive cone in a Banach lattice.
Typically, the nonlinearity considered in \cite{du2022on} takes the form
\begin{equation}\label{du-framework}
\begin{split}
  &f_1(x,u_1,\ldots,u_d)=u_1h_1(x,u_1,\ldots,u_d),\\
  &f_{i}(x,u_1,\ldots,u_d)=\sum_{j=1}^{i-1}c_{ij}(x)u_j+
  u_ih_i(x,u_1,\ldots,u_d),~~~2\leq i\leq d,
\end{split}
\end{equation}
under certain conditions imposed on the functions $h_i$ and $c_{ij}$.
A classical model that fits their setting is a cooperative system derived from the Lotka-Volterra competitive system after performing a well known change of variables.
However,
there are many examples of $f$ which could not be unified as \eqref{du-framework}, see Sect.\,\ref{examples} below for details.
In this regard, our setting thus covers a large class of other types of nonlinearities not previously treated, thereby contributing new results.
\subsection{Elements of the proof}\label{outline-proof}
The proof of Theorem \ref{main-result-qualitative-property} adapts the arguments of Berestycki et al. \cite{berestycki2005analysis-2} and Hamel \cite{hamel2008qualitative}
who investigated scalar equations of strong-KPP and monostable types.
The proof of Theorem \ref{main-result-existence} uses a general methodology developed by the first two authors of the present paper in \cite{deng2023existence,SI-system}.
Specifically, the first step constructs pulsating waves in each direction of unit vectors with rational coordinates (denoted as $\mathbb{Q}^N\cap\mathbb{S}^{N-1}$),
and the second step performs an approximation procedure for a general direction in $\mathbb{S}^{N-1}$.
For the latter purpose and to prove Theorem \ref{main-result-nonexistence}, we adapt some arguments developed by Berestycki et al. for scalar equations \cite{berestycki2005analysis-1,berestycki2007Liouville}.
Observe that the profile of one-dimensional pulsating waves at time $T=t+{1}/{c}$
is an exact copy of the profile at $T=t$, shifted in space by one unit of length.
This observation provides a key motivation for formulating the pulsating wave as a fixed-point problem.
However, in the multi-dimensional setting, the problem
(that is, to verify \eqref{pulsating-condition}) becomes much more involved.

We now highlight the main new ingredients in the present paper for constructing a pulsating wave propagating along any rational direction, which differ from those in \cite{deng2023existence,SI-system}.
Firstly, we construct sub- and supersolutions for all admissible speeds, especially for the critical case, inspired again by \cite{hamel2008qualitative}.
Secondly, by slightly modifying the coordinate transformation introduced in \cite{deng2023existence},
pulsating waves in a rational direction,
observed from a new system of coordinates,
become (roughly speaking) pulsating waves propagating inside a cylinder with a periodic cross-section.
Based on that and inspired by Ogiwara and Matano \cite[Sect.7]{ogiwara1999stability},
we reformulate a pulsating wave with speed $c>0$
as a fixed point of the solution map at $t=\tau_1/c$ composed with a shift by $\tau_1$ along the cylinder axis, where $\tau_1>0$ is a parameter determined by the period of the medium and the rational direction of propagation.
Lastly,
we address this issue by considering an initial/boundary-value problem posed in a semi-infinite periodic cylinder with a moving (at velocity $c$) boundary
subject to the homogeneous Neumann data,
which is then solved by the Schauder fixed-point theorem (see Proposition \ref{rational-direction-existence}).
In particular, to verify the limiting conditions of the solution, a novel lower barrier is also constructed (see \eqref{bound-rational-c}-\eqref{lower-barrier-finite-c*}).
This approach was developed by Griette and Matano \cite{griette2021propagation} who investigated pulsating one-dimensional waves for other models.

Let us provide further remarks on our method.
For scalar equations, a common and powerful approach to constructing pulsating waves is to solve a semilinear degenerate elliptic equation via a regularization procedure  \cite{berestycki2002front,berestycki2005analysis-2,ducrot2016multi}.
This approach requires refined uniform estimates of the solution with respect to a vanishing viscosity parameter.
However, it would certainly require more PDE techniques and possibly make additional assumptions on the coefficients in \eqref{nondivergence} to extend such estimates, especially gradient estimates, to general systems as \eqref{monotone-system}.
In contrast, a key contribution of our method,
as already demonstrated for the scalar Fisher-KPP equation in \cite{deng2023existence},
lies in overcoming the difficulty caused by elliptic degeneracy.
The present work generalizes this method to the monotone system \eqref{monotone-system} and provides a new approach to constructing pulsating waves in rational directions.
In \cite{deng2023existence}, the construction of pulsating waves in a rational direction  relies on the approximation of the equation in bounded domains.
To do so, we first solved a space-time periodic boundary value problem and then performed an iteration procedure.
Here we devise a subtle fixed-point problem as outlined above to construct pulsating waves in rational directions,
which actually leads to considerable simplifications compared to \cite{deng2023existence,Bages2012}.
For a non-rational direction of propagation, we recover existence by passing the limit as in \cite{deng2023existence,SI-system}.
\subsection{Examples}\label{examples}
Monotone reaction-diffusion systems arise in diverse applications \cite{volpert2014elliptic}.
As a topic in theoretical ecology, such systems describe the dynamics of mutualistic symbiosis between two or more species \cite{murray}.
Here we briefly present several examples to illustrate the applications and motivations of the present work.
We provide only the explicit form of the nonlinearities that fall within our settings,
while the corresponding modelling mechanism refers to the references cited.

The first example is a very classical model (known as a ``positive feedback loop" of $d$ species) for controlling protein synthesis in the cell
\cite[Sect.\,4.2 and Sect.\,7.7]{Smith1995monotone}
(see also \cite[Sect.\,7.2]{murray}) in which the nonlinearity is defined as
\begin{equation}\label{example1-f}
\begin{split}
  &f_1(u_1,\ldots,u_d)=g(u_d)-\alpha_1u_1,\\
  &f_{i}(u_1,\ldots,u_d)=u_{i-1}-\alpha_iu_i,~~2\leq i\leq d,
\end{split}
\end{equation}
with positive coefficients $\alpha_i$ and $d>2$.
The function $g:\mathbb{R}_+\to\mathbb{R}_+$ is assumed to be:
\begin{equation}\label{example-g}
\forall s>0,~~~~0<g(s)<M,~~g^\prime(s)>0~\text{ and }~g(0)=0.
\end{equation}
A typical example of $g$ from \cite{Smith1995monotone} is given by
$$
g(s)=\frac{s^p}{1+s^p}~~\text{  for some integer $p\geq1$}.
$$
With such a choice and for any $p>1$, the function
$f=(f_1,\ldots,f_d)$ defined in \eqref{example1-f} satisfies assumptions \ref{item:smoothness}-\ref{item:irreducibility} but verifies neither subhomogeneity \eqref{subhomogeneity} nor sublinearity \eqref{sublinearity}.
When $p=1$, then \eqref{subhomogeneity} is fulfilled.
Furthermore, a slight modification of the assumptions on $g$ satisfying \eqref{example-g} can yield further insights.
We consider two types for the function $g$ as follows:
\begin{itemize}
  \item For all $s>0$,~~~$0<g(s)\leq g^\prime(0)s$ (weak-KPP hypothesis),
  \item The map $s\mapsto\frac{g(s)}{s}$ is strictly decreasing in $(0,\infty)$ (strong-KPP hypothesis).
\end{itemize}
It is straightforward to verify that if $g$ is weak-KPP type, then the function $f$ defined in \eqref{example1-f} only satisfies the sublinear condition \eqref{sublinearity}.
However, the strictly subhomogeneous condition \eqref{strict-subhomogeneity} is verified provided $g$ is strong-KPP type.
In the latter case, condition \eqref{strict-sublinear-condition} is thus guaranteed from Remark \ref{stronger-conditions}.

A second example is an SIR-type model for rabies circulation between two populations \cite{deng2021propagation}, in which
the model assumes that susceptible individuals remain motionless to capture the disease dynamics.
After performing a change of variables, the authors turned such an epidemic model into a monotone system from which the nonlinearity is of the form
\begin{equation*}
\begin{split}
&f_1(x,u_1,u_2)=S_1^0(x)\left(1-e^{-(\beta_{11}(x)u_1+\beta_{12}(x)u_2)}\right)
-\delta_{1}(x)u_1,\\
&f_2(x,u_1,u_2)=S_2^0(x)\left(1-e^{-(\beta_{21}(x)u_1+\beta_{22}(x)u_2)}\right)
-\delta_{2}(x)u_2.
\end{split}
\end{equation*}
With all parameters being periodic in $x$ and positive everywhere,
the function $f=(f_1,f_2)$ satisfies not only assumptions \ref{item:smoothness}-\ref{item:irreducibility} but also the strictly (in fact, strongly) subhomogeneous condition \eqref{strict-subhomogeneity}.
Hence all the aforementioned conditions are fulfilled in this example.

The last example is drawn from an ecological model of population migration among distinct patches (such as cities) \cite[Sect.\,4.4]{Smith1995monotone} in which the nonlinearity is defined as
\begin{equation*}
\begin{split}
  f_1(u_1,\ldots,u_d)&=\epsilon(u_2-u_1)+r_1u_1\Big(1-\frac{u_1}{K_1}\Big),\\
  f_2(u_1,\ldots,u_d)&=\epsilon(u_1+u_3-2u_2)+r_2u_2\Big(1-\frac{u_2}{K_2}\Big),\\
  &\vdots\\
  f_d(u_1,\ldots,u_d)&=\epsilon(u_{d-1}-u_d)+r_du_d\Big(1-\frac{u_d}{K_d}\Big).
\end{split}
\end{equation*}
One can easily check that the function $f=(f_1,\ldots,f_d)$ satisfies assumptions \ref{item:smoothness}-\ref{item:irreducibility} and it is strongly subhomogeneous since the population growth obeys the Logistic law.

\vspace{0.2cm}
\noindent
\emph{Outline of the paper.}
The remainder of the paper is as follows:
Sect.\,\ref{sect:linear-problem} contains some preliminaries and the proof of which is provided in Appendix \ref{appendix:strict-concavity}.
In Sect.\,\ref{sect:qualitative-estimates}, we derive the lower bound for the wave speeds, its linear determinacy, and the monotonicity results.
Sect.\,\ref{sect:existence} and Sect.\,\ref{sect:nonexistence} prove the existence and nonexistence of pulsating waves, respectively.
Appendix \ref{appendix:Harnack-inequa} contains two Harnack-type estimates that are used several times in our proofs.
\section{Preliminaries}\label{sect:linear-problem}
In this section, we collect some important properties of the principal eigenpairs for a cooperative fully coupled system of elliptic operators with periodic coefficients.
The relationship among the principal eigenvalues in several senses is also analyzed.
\begin{proposition}\label{generalized-principal-eigenvalue-properties}
Let $H\in C^\alpha(\mathbb{T}^N,\mathcal{M}_{d}(\mathbb{R}))$ be a cooperative and fully coupled matrix field according to Definition \ref{cooperative-irreducible}.
Let $(\lambda^R_{1},\varphi^R)$ be the principal eigenpair of the problem
    $$
    \begin{dcases}
       (L-H)\varphi^R=\lambda^R_{1}\varphi^R&\text{in }B_R,\\
        \varphi^R=\bm{0}&\text{on }\partial B_R,\\
        \varphi^R\gg\bm{0}&\text{in }B_R,
    \end{dcases}
    $$
    where $L$ denotes a diagonal matrix of linear elliptic operators given by \eqref{nondivergence}.
Let
  \begin{equation}\label{generalized-principal-eigenvalue}
\lambda_1=\sup\left\{\lambda\in\mathbb{R}\mid\exists\,\psi\in
 C^2(\mathbb{R}^N)^d,~\psi\gg\bm{0}\text{ and }
 (L-H)\psi\geq\lambda\psi\text{ in }\mathbb{R}^N\right\}
\end{equation}
denote the generalized principal eigenvalue of the operator $L-H$ in $\mathbb{R}^N$.

Then $\lambda^R_1\searrow\lambda_1$ as $R\to+\infty$.
Moreover, there exists a generalized principal eigenfunction $\varphi\in C^2(\mathbb{R}^N)^d$ associated with $\lambda_1$, i.e. it satisfies $\varphi\gg\bm{0}$ and
$$
(L-H)\varphi=\lambda_1\varphi~\text{ in $\mathbb{R}^N$}.
$$
\end{proposition}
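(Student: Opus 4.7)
I would proceed in three steps: monotonicity of $R\mapsto\lambda^R_1$ via a sliding argument, the bound $\lambda_1\leq\lim_R\lambda^R_1$ via a supersolution-comparison principle, and the construction of an eigenfunction by a compactness procedure on the normalized Dirichlet eigenfunctions.

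\emph{Step 1: $R\mapsto\lambda^R_1$ is nonincreasing.} Given $R<R'$, I set
$$
t^\star:=\sup\bigl\{t>0:\varphi^{R'}\geq t\varphi^R\text{ in }\overline{B_R}\bigr\},
$$
which is finite and positive because $\varphi^{R'}\gg\bm{0}$ on $\overline{B_R}$ while $\varphi^R=\bm{0}$ on $\partial B_R$ with nonvanishing outward normal derivative (Hopf lemma applied componentwise). Then $w:=\varphi^{R'}-t^\star\varphi^R\geq\bm{0}$ in $\overline{B_R}$, is strictly positive on $\partial B_R$, and some component $w_{i_0}$ touches zero at an interior point $x_0\in B_R$. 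Computing
$$
(L-H-\lambda^{R'}_1)w=(\lambda^{R'}_1-\lambda^R_1)\,t^\star\varphi^R,
$$
evaluating the $i_0$-th line at $x_0$, and combining the cooperativity of $H$ (so that the off-diagonal contributions $h_{i_0 j}(x_0)w_j(x_0)$ are nonnegative) with the fact that $L^{i_0}w_{i_0}(x_0)\leq 0$ at an interior minimum, the hypothesis $\lambda^{R'}_1>\lambda^R_1$ yields a strict contradiction. Hence $\lambda^R_1\geq\lambda^{R'}_1$, and the limit $\lambda^\infty_1:=\lim_{R\to\infty}\lambda^R_1\in\mathbb{R}\cup\{-\infty\}$ exists.

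\emph{Step 2: $\lambda_1\leq\lambda^\infty_1$.} The core lemma is: whenever $\psi\in C^2(\mathbb{R}^N)^d$ satisfies $\psi\gg\bm{0}$ and $(L-H)\psi\geq\lambda\psi$ on $\mathbb{R}^N$, one has $\lambda\leq\lambda^R_1$ for every $R>0$. The argument is the same sliding scheme restricted to $\overline{B_R}$: the ratios $\psi_i/\varphi^R_i$ blow up on $\partial B_R$ by Hopf, so the quantity $t^\star:=\sup\{t:\psi\geq t\varphi^R\text{ in }\overline{B_R}\}$ is finite, positive, and the function $\psi-t^\star\varphi^R\geq\bm{0}$ touches zero in some component at an interior point, leading to the same contradiction at that point if $\lambda>\lambda^R_1$. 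Taking the supremum over admissible $\lambda$ in \eqref{generalized-principal-eigenvalue} gives $\lambda_1\leq\lambda^R_1$ for all $R$, hence $\lambda_1\leq\lambda^\infty_1$; in particular $\lambda^\infty_1$ is finite, since testing $\psi\equiv\bm{1}$ already shows $\lambda_1\geq-\|H\|_\infty>-\infty$.

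\emph{Step 3: construction of $\varphi$ and the reverse inequality.} After renormalizing $\varphi^R$ so that $\max_i\varphi^R_i(0)=1$ (legitimate once $\bm{0}\in B_R$), I would invoke the Harnack inequality for cooperative fully coupled elliptic systems recalled in Appendix~\ref{appendix:Harnack-inequa} to bound each component $\varphi^R_i$ from above on compact subsets of $\mathbb{R}^N$, uniformly in $R$. Interior Schauder estimates then furnish uniform $C^{2,\alpha}_{\mathrm{loc}}$ bounds, and a diagonal Arzel\`a-Ascoli extraction yields $\varphi^{R_n}\to\varphi$ in $C^2_{\mathrm{loc}}(\mathbb{R}^N)$ with $(L-H)\varphi=\lambda^\infty_1\varphi$, $\varphi\geq\bm{0}$, $\varphi\not\equiv\bm{0}$. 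The strong maximum principle for cooperative fully coupled systems (Sweers-type) upgrades this to $\varphi\gg\bm{0}$. Testing $\varphi$ in \eqref{generalized-principal-eigenvalue} gives the reverse inequality $\lambda^\infty_1\leq\lambda_1$, whence equality.

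\emph{Main obstacle.} The delicate point is the uniform local upper bound on the renormalized $\{\varphi^R\}_R$: a componentwise Harnack inequality is not enough, because it does not couple the magnitudes of distinct components. The fully coupled structure of $H$ must be used to propagate the normalization at $0$ to every component on any compact set. Granting such a system Harnack, the remaining steps reduce to routine sliding arguments together with elliptic regularity.
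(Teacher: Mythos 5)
Your argument is correct, but note that the paper does not actually prove Proposition \ref{generalized-principal-eigenvalue-properties}: it explicitly defers to Girardin and Mazari (Propositions 3.2--3.3 of \cite{girardin2023generalized}, stated there in a more general space-time periodic parabolic setting), and Appendix \ref{appendix:strict-concavity} only proves parts of Proposition \ref{periodic-principal-eigenvalue-properties}. So you are supplying a self-contained proof where the paper supplies a citation. What you write is the standard Berestycki--Nirenberg--Varadhan / Berestycki--Rossi scheme transposed to cooperative fully coupled systems, and it is essentially the argument underlying the cited reference: monotonicity of $R\mapsto\lambda^R_1$ and the inequality $\lambda_1\le\lambda^R_1$ both follow from the same touching-point comparison (cooperativity kills the off-diagonal terms at the contact point, $L^{i_0}w_{i_0}(x_0)\le 0$ at an interior minimum, and the right-hand side $t^\star(\lambda-\lambda^R_1)\varphi^R_{i_0}(x_0)$ is strictly positive if $\lambda>\lambda^R_1$), while the eigenfunction is produced by normalizing at the origin, invoking the system Harnack inequality of Theorem \ref{elliptic-Har-ineq}, and passing to the limit; you correctly identify that a componentwise scalar Harnack would not suffice here. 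Two small points to tighten: in Step 1 the finiteness of $t^\star$ follows simply from evaluating the ratio at one fixed interior point (Hopf is only needed for positivity of $t^\star$, not finiteness); and in Step 3 the Harnack constant must be uniform in $R$, which requires observing that the zero-order matrices $H+\lambda^R_1 I$ range over a bounded family since $\lambda^R_1$ is monotone and bounded below by $\lambda_1>-\infty$ (your test with $\psi\equiv\bm{1}$), and that bounds on arbitrary compact sets follow from the periodicity of the coefficients together with a covering argument, as the appendix states the inequality only for concentric balls.
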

\begin{proposition}\label{periodic-principal-eigenvalue-properties}
Let $L_{\lambda e}$ be a diagonal matrix of linear elliptic operators given by \eqref{modified-operator}
and $H\in C^\alpha(\mathbb{T}^N,\mathcal{M}_{d}(\mathbb{R}))$ be cooperative and fully coupled in the sense of Definition \ref{cooperative-irreducible}.
Then the following statements hold.
\begin{enumerate}[label=\textup{(\roman*)}]
  \item\label{item:existence-uniqueness}
  For all $\lambda\in\mathbb{R}$ and for each $e\in\mathbb{S}^{N-1}$, there exists a unique $k=k(\lambda,e)\in\mathbb{R}$ (principal eigenvalue) and a function $\phi=\phi_{\lambda e}\in C^2(\mathbb{T}^N)^d$ with $\phi_{\lambda e}\gg\mathbf{0}$, unique up to positive scalar multiple,
  which solve the system of periodic elliptic equations
      \begin{equation}\label{nonsymmetric-periodic-PE}
      (L_{\lambda e}-H)\phi=k\phi~~\text{in }\mathbb{T}^N.
      \end{equation}
    \item\label{item:max-min} The principal eigenvalue $k(\lambda,e)$ can be characterized by
        \begin{equation}\label{max-min}
        k(\lambda,e)=\max_{\phi\gg\bm{0},\,\phi\in C^2(\mathbb{T}^N)^d}
        \min_{1\leq i\leq d}\inf_{x\in\mathbb{T}^N}
        \frac{\big(e^i(L_{\lambda e}-H)\phi\big)(x)}{\phi_i(x)},
        \end{equation}
        where ${e^i}\in\mathbb{S}^{d-1}$ denotes the row vector with only the $i$-th coordinate equal to $1$.
    \item\label{item:concavity} For any given $e\in\mathbb{S}^{N-1}$, the function $\lambda\mapsto k(\lambda,e)$ is analytic and strictly concave in $\mathbb{R}$.
        Furthermore, there exists $\beta\in\mathbb{R}$ such that
        \begin{equation}\label{upper-estimate-principal-eigenvalue}
          \forall\lambda\in\mathbb{R},~~
          k(\lambda,e)\leq-\underline{\gamma}\lambda^2+\beta\lambda+
          \max_{1\leq i\leq d}\|h_{ii}\|_\infty,
        \end{equation}
    where $\underline{\gamma}$ has been defined in \eqref{elliptic-condition}.
    \item\label{item:continuity} Let $\phi_{\lambda e}$ be the unique principal eigenfunction such that \begin{align}\label{normalization-principal-eigenfunction}
         \phi_{\lambda e}\gg\bm{0}~\text{ and }~\|\phi_{\lambda e}\|_\infty=1.
        \end{align}
        Then for all $\lambda\in\mathbb{R}$, the map $\mathbb{S}^{N-1}\ni e\mapsto k(\lambda,e)$ is continuous and the normalized principal eigenfunction $\phi_{\lambda e}$ also depends continuously on $\lambda e\in\mathbb{R}^{N}$ with respect to the uniform topology.
    \item\label{item:maximum} One has
    \begin{equation}\label{maximum-periodic-PE}
    \lambda_{1}=\max_{\lambda e\in\mathbb{R}^N}k(\lambda,e),
    \end{equation}
    where $\lambda_1$ is defined by \eqref{generalized-principal-eigenvalue}.
    Furthermore, for any given $e\in\mathbb{S}^{N-1}$, the maximum in \eqref{maximum-periodic-PE} is attained at a unique $\bar{\lambda}\in\mathbb{R}$  such that the function $\mathbb{R}^N\ni x\mapsto \phi_{\bar{\lambda}e}(x)e^{\bar{\lambda}e\cdot x}$ is a generalized principal eigenfunction associated with $\lambda_{1}$.
  \end{enumerate}
\end{proposition}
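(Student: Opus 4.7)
The plan is to anchor everything in Krein-Rutman theory for strongly positive compact operators and then deduce the analytic and variational consequences in order. For (i), I would apply Krein-Rutman to the resolvent of $L_{\lambda e} - H + \kappa I$ on $C(\mathbb{T}^N)^d$ for sufficiently large $\kappa > 0$; Schauder estimates provide compactness, and the strong maximum principle for cooperative fully coupled systems (Sweers \cite{sweers1992strong}, Birindelli-Mitidieri \cite{birindelli1999existence}) delivers strong positivity on the natural cone. This yields a simple principal eigenvalue $k(\lambda,e)$ with a strictly positive periodic eigenfunction $\phi_{\lambda e}$, unique up to a positive scalar. For (ii), the direction ``$\geq$'' in \eqref{max-min} is immediate by plugging $\phi = \phi_{\lambda e}$ (equality holds in every component). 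For the reverse, I would invoke the principal eigenfunction $\phi^*_{\lambda e}$ of the formal adjoint $L_{\lambda e}^\ast - H^T$ (also cooperative and fully coupled, sharing the same principal eigenvalue): for any trial $\phi \gg \bm{0}$, the number $m := \min_i \inf_x [(L_{\lambda e} - H)\phi]_i/\phi_i$ satisfies $(L_{\lambda e} - H)\phi \geq m\phi$ componentwise, and pairing against $\phi^*_{\lambda e}$ and integrating over $\mathbb{T}^N$ yields $k(\lambda,e) \geq m$.

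Analyticity of $\lambda \mapsto k(\lambda,e)$ in (iii) follows from Kato's analytic perturbation theory, since $\lambda \mapsto L_{\lambda e}$ is a polynomial family of closed operators and $k(\lambda,e)$ is a simple isolated eigenvalue. The quadratic upper bound \eqref{upper-estimate-principal-eigenvalue} is obtained by a suitable test-function argument against the max-min formula of (ii), using the uniform ellipticity $eA^ie \geq \underline{\gamma}$ from \eqref{elliptic-condition} to extract the coercive $-\underline{\gamma}\lambda^2$ term. Strict concavity is the main analytical obstacle; I plan to use a geometric-mean ansatz. For $\lambda_0 \neq \lambda_1$, $\theta \in (0,1)$, and $\lambda_\theta := (1-\theta)\lambda_0 + \theta\lambda_1$, set
\[\Phi_\theta^i(x) := \bigl(\phi_{\lambda_0 e}^i(x)\bigr)^{1-\theta}\bigl(\phi_{\lambda_1 e}^i(x)\bigr)^\theta.\]
Writing $u_j := \log \phi_{\lambda_j e}^i$ and expanding $L^i_{\lambda_\theta e}\Phi_\theta^i/\Phi_\theta^i$ via the chain rule (the computation is cleanest in the nondivergence form, with the divergence form following by absorbing $\nabla\cdot A^i$ into $q^i$), a direct calculation yields the identity
\[\frac{L^i_{\lambda_\theta e}\Phi_\theta^i}{\Phi_\theta^i} = (1-\theta)\frac{L^i_{\lambda_0 e}\phi_{\lambda_0 e}^i}{\phi_{\lambda_0 e}^i} + \theta\frac{L^i_{\lambda_1 e}\phi_{\lambda_1 e}^i}{\phi_{\lambda_1 e}^i} + \theta(1-\theta)\,\xi^T A^i\xi,\]
with $\xi(x) := \nabla u_1(x) - \nabla u_0(x) - (\lambda_1 - \lambda_0)e$. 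The AM-GM inequality applied to the ratios $\Phi_\theta^l/\Phi_\theta^i$ together with the cooperativity $h_{il} \geq 0$ for $l\neq i$ produces the matching convex-combination inequality for the $H\Phi_\theta$ term. The remainder $\theta(1-\theta)\xi^T A^i\xi \geq \underline{\gamma}\,\theta(1-\theta)|\xi|^2$ cannot vanish identically, since $\xi \equiv 0$ would force $\phi^i_{\lambda_1 e}/\phi^i_{\lambda_0 e}$ to be a non-constant exponential in $e\cdot x$, contradicting periodicity. The min-max characterization (ii) then gives $k(\lambda_\theta, e) > (1-\theta) k(\lambda_0, e) + \theta k(\lambda_1, e)$.

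For (iv), given $\lambda_n e_n \to \lambda e$, Schauder interior estimates applied to the normalized eigenfunctions $\phi_{\lambda_n e_n}$ give precompactness in $C^{2,\alpha'}(\mathbb{T}^N)^d$ for any $\alpha' < \alpha$; passing to the limit in the eigenvalue equation and using the uniqueness in (i) identifies every accumulation point as $\phi_{\lambda e}$, so the full sequence converges. For (v), one direction is immediate: the function $\Psi(x) := \phi_{\lambda e}(x)\,e^{-\lambda e \cdot x}$ is a strictly positive classical solution of $(L - H)\Psi = k(\lambda, e)\Psi$ in $\mathbb{R}^N$, hence an admissible competitor in \eqref{generalized-principal-eigenvalue}, giving $k(\lambda, e) \leq \lambda_1$; existence of a maximizer $\bar\lambda \in \mathbb{R}$ then follows from concavity combined with the quadratic upper bound of (iii) (which forces $k(\cdot, e)\to -\infty$ as $|\lambda|\to\infty$), and uniqueness of $\bar\lambda$ from strict concavity. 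For the reverse inequality $\max_{\lambda e \in \mathbb{R}^N} k(\lambda, e) \geq \lambda_1$, I would take the generalized principal eigenfunction $\varphi$ of $\lambda_1$ provided by Proposition \ref{generalized-principal-eigenvalue-properties}; a Harnack/Floquet-type argument exploiting the periodicity of $L - H$ along $e$ forces $\varphi$ to take the exponential-times-periodic form $\phi_{\bar\lambda e}(x)\,e^{-\bar\lambda e \cdot x}$ for some $\bar\lambda\in\mathbb R$ (up to a positive scalar), whence $\lambda_1 = k(\bar\lambda, e)$ by the uniqueness in (i). The main difficulty I anticipate is the strict-concavity computation in (iii): the cross-term identity displayed above must be verified by careful bookkeeping of all the terms in $L^i_{\lambda e}$, especially in the divergence form where the derivatives of $A^i$ enter.
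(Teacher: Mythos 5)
Your architecture matches the paper's almost step for step (Krein--Rutman for \ref{item:existence-uniqueness}, Kato for analyticity, a geometric-mean/log-convexity computation for concavity, compactness plus uniqueness for \ref{item:continuity}, a Floquet-type reduction for \ref{item:maximum}; your adjoint-pairing proof of \eqref{max-min} is a genuine argument where the paper only cites Sweers). However, the passage from concavity to \emph{strict} concavity is gapped as written. Your remainder identity with $\theta(1-\theta)\,\xi^TA^i\xi$ is exactly the paper's computation \eqref{scalling} transported to the periodic frame, and the observation that $\xi\equiv 0$ contradicts periodicity is the correct source of strictness. But the inference ``the remainder does not vanish identically, hence \eqref{max-min} gives $k(\lambda_\theta,e)>(1-\theta)k(\lambda_0,e)+\theta k(\lambda_1,e)$'' is invalid: formula \eqref{max-min} takes $\min_i\inf_x$ of the quotient, and that infimum may be attained precisely where $\xi(x)=0$, so a remainder that is positive only on part of $\mathbb{T}^N$ contributes nothing to the lower bound. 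The paper closes this by contradiction: if equality held, the geometric mean $\Phi_\theta$ would realize the maximum in \eqref{max-min}, hence be the principal eigenfunction, forcing every intermediate inequality to be an equality pointwise, whence $\xi\equiv 0$ and, after integrating $\nabla\ln\bigl(\phi^i_{\lambda_1 e}/\phi^i_{\lambda_0 e}\bigr)=(\lambda_1-\lambda_0)e$ over $\mathbb{T}^N$, $\lambda_0=\lambda_1$. Alternatively, your own adjoint eigenfunction from \ref{item:max-min} yields a direct fix: integrating $(L_{\lambda_\theta e}-H)\Phi_\theta-[(1-\theta)k(\lambda_0,e)+\theta k(\lambda_1,e)]\Phi_\theta\geq\bm{0}$, $\not\equiv\bm{0}$, against $\phi^*_{\lambda_\theta e}\gg\bm{0}$ forces the strict inequality. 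Either way, an extra step is needed.

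In \ref{item:maximum} your claim that the Harnack/Floquet argument ``forces $\varphi$ to take the exponential-times-periodic form'' is too strong and, in general, false: a generalized principal eigenfunction need not be of that form, nor is it unique. What the argument actually produces is a \emph{new} generalized eigenfunction of exponential-times-periodic type, built from $\varphi$ by a limiting procedure: set $M_1=\sup_x\max_i\varphi_i(x+e^1)/\varphi_i(x)$ (finite by the elliptic Harnack inequality of Theorem \ref{elliptic-Har-ineq} and periodicity of the coefficients), translate along a sequence realizing this supremum, and apply the strong maximum principle to the limit of $\varphi(\cdot+e^1+x_n)-M_1\varphi(\cdot+x_n)$ to conclude that the limiting function satisfies $\widetilde\varphi^{\,\infty}(x+e^1)=M_1\widetilde\varphi^{\,\infty}(x)$; iterating over the $N$ coordinate directions yields a periodic $\phi_\alpha$ with $\lambda_1=k(\alpha)$. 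This construction is the substantive content of statement \ref{item:maximum} and is exactly what your one-line sketch needs to supply; the remaining pieces of your (v) (the inequality $k(\lambda,e)\leq\lambda_1$ via the competitor $\phi_{\lambda e}e^{-\lambda e\cdot x}$, and existence/uniqueness of the maximizer from strict concavity and \eqref{upper-estimate-principal-eigenvalue}) are correct and coincide with the paper.
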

Before proceeding further, we mention a recent work by Girardin and Mazari \cite{girardin2023generalized} who made a deep study of principal eigenvalues for a linear cooperative system of space-time periodic parabolic operators.
In particular, their results can be applied to explain in various ways how spatial heterogeneity affects $k(\lambda,e)$.
The aforementioned results in the elliptic setting are analogues to their results.
For instance, Propositions 3.2-3.3 in \cite{girardin2023generalized} are more general than Proposition \ref{generalized-principal-eigenvalue-properties}.
But we include in Appendix \ref{appendix:strict-concavity} the proof of partial results of Proposition \ref{periodic-principal-eigenvalue-properties} for completeness of the present paper.
Most of the proofs are direct adaptations of the arguments developed by Berestycki et al. \cite{berestycki2008asymptotic,berestycki2002front,berestycki2005analysis-2} and
Nadin \cite{nadin2009principal} for scalar equations.

The next result is a direct consequence of Proposition \ref{periodic-principal-eigenvalue-properties}.
\begin{proposition}\label{minimal-speed}
Assume $k(0,e)<0$. Then the following conclusions hold true.
\begin{enumerate}[label=\textup{(\roman*)}]
\item The quantity $c^*(e)$ defined by \eqref{formula-minimal-speed-inf} is a real number.
    Moreover, for every fixed $c\geq c^*(e)$, the following quantity
\begin{equation}\label{decay-rate}
  \lambda_c(e):=\min\{\lambda>0\mid k(\lambda,e)+c\lambda=0\}
\end{equation}
is well-defined and there holds
\begin{equation}\label{formula-minimal-speed}
  c^*(e)=\min_{\lambda>0}\frac{-k(\lambda,e)}{\lambda}.
\end{equation}
\item Both $\lambda_c(e)$ and $c^*(e)$ are continuous in $e\in\mathbb{S}^{N-1}$.
\item The equation
  \begin{equation}\label{char-equa}
    k(\lambda,e)+c\lambda=0~~\text{ for }\lambda>0
  \end{equation}
  has solutions if and only if $c\geq c^*(e)$. For $c=c^*(e)$, there exists a unique solution $\lambda^*:=\lambda_{c^*}(e)>0$, while for all $c>c^*(e)$, there are exactly two solutions $\lambda^-_c:=\lambda_{c}(e)$ and $\lambda^+_c$ satisfying  $0<\lambda^-_c<\lambda^+_c$.
\end{enumerate}
\end{proposition}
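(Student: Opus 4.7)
The plan is to derive all three parts by a one-variable analysis of $\Lambda_e(\lambda):=-k(\lambda,e)/\lambda$ on $(0,\infty)$, leveraging the strict concavity of $\lambda\mapsto k(\lambda,e)$ from Proposition \ref{periodic-principal-eigenvalue-properties}\ref{item:concavity} together with the quadratic upper bound \eqref{upper-estimate-principal-eigenvalue}. Since $k(0,e)<0$ by assumption, $\Lambda_e(\lambda)\to+\infty$ as $\lambda\to 0^+$; meanwhile \eqref{upper-estimate-principal-eigenvalue} yields $\Lambda_e(\lambda)\ge \underline{\gamma}\lambda-\beta-O(1/\lambda)\to+\infty$ as $\lambda\to+\infty$. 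Being continuous and coercive on $(0,\infty)$, $\Lambda_e$ attains its finite positive infimum at some $\lambda^*(e)>0$; this proves $c^*(e)\in\mathbb{R}$ and shows that the infimum in \eqref{formula-minimal-speed-inf} is realized as a minimum, which is formula \eqref{formula-minimal-speed} in part (i).

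I would then establish uniqueness of the minimizer and the U-shape of $\Lambda_e$. Setting $\psi(\lambda):=k(\lambda,e)-\lambda\,\partial_\lambda k(\lambda,e)$, one has $\Lambda_e'(\lambda)=\psi(\lambda)/\lambda^2$ and $\psi'(\lambda)=-\lambda\,\partial_{\lambda\lambda}^2 k(\lambda,e)\ge 0$ by concavity. By analyticity, $\partial_{\lambda\lambda}^2 k(\cdot,e)$ cannot vanish on an open interval (otherwise $k(\cdot,e)$ would be affine, contradicting strict concavity), hence $\psi$ is strictly increasing on $(0,\infty)$ and has a unique zero $\lambda^*(e)$. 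Therefore $\Lambda_e$ is strictly decreasing on $(0,\lambda^*(e))$ and strictly increasing on $(\lambda^*(e),\infty)$. Translating the equation $k(\lambda,e)+c\lambda=0$ on $(0,\infty)$ into $\Lambda_e(\lambda)=c$, the U-shape gives part (iii) at once: no root when $c<c^*(e)$, the single root $\lambda^*(e)$ when $c=c^*(e)$, and exactly two roots $\lambda_c^-<\lambda^*(e)<\lambda_c^+$ when $c>c^*(e)$. In particular $\lambda_c(e)=\lambda_c^-$ (with $\lambda_c(e)=\lambda^*(e)$ at criticality) is well-defined, completing (i) and (iii).

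For part (ii) I would use a compactness-and-uniqueness argument. First, the map $(\lambda,e)\mapsto k(\lambda,e)$ is jointly continuous: pointwise continuity in $e$ is given by Proposition \ref{periodic-principal-eigenvalue-properties}\ref{item:continuity}, and combined with concavity in $\lambda$ and the uniform-in-$e$ upper bound \eqref{upper-estimate-principal-eigenvalue} (whose coefficients depend only on $|e|=1$ and the structural data in \ref{item:coefficient}, \ref{item:smoothness}), the family $\{k(\cdot,e)\}_{e\in\mathbb{S}^{N-1}}$ is equi-Lipschitz on compact $\lambda$-intervals, so pointwise convergence upgrades to local uniform convergence. Since $k(0,e)=\lambda_1^p$ is independent of $e$, the coercivity of $\Lambda_e$ at both endpoints is uniform in $e$, so the minimizers $\lambda^*(e)$ and the smaller roots $\lambda_c(e)$ (for $c$ varying in any bounded range) all stay in a common compact subinterval of $(0,\infty)$. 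For any $e_n\to e$, every accumulation point of $\lambda^*(e_n)$ (resp.\ of $\lambda_c(e_n)$) must be a minimizer of $\Lambda_e$ (resp.\ a root of $k(\cdot,e)+c\cdot=0$ lying in $(0,\lambda^*(e)]$), hence equals $\lambda^*(e)$ (resp.\ $\lambda_c(e)$) by the uniqueness established above. The most delicate point is the critical case $c=c^*(e)$, where the roots $\lambda_c^\pm$ merge and the implicit function theorem fails; this is absorbed into the same framework because at criticality $\lambda_c(e)=\lambda^*(e)$, whose continuity is already secured.
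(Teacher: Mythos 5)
Your proposal is correct and is precisely the elaboration the paper intends: Proposition \ref{minimal-speed} is stated there as a ``direct consequence'' of Proposition \ref{periodic-principal-eigenvalue-properties} with no written proof, and your one-variable analysis of $\lambda\mapsto-k(\lambda,e)/\lambda$ --- coercivity at both ends from $k(0,e)<0$ and \eqref{upper-estimate-principal-eigenvalue}, the U-shape via $\psi(\lambda)=k-\lambda\partial_\lambda k$ with strict monotonicity forced by analyticity plus strict concavity, and continuity in $e$ by compactness and uniqueness of the minimizer and of the smaller root --- is exactly the intended argument. Two small touch-ups: the adjective ``positive'' in ``finite positive infimum'' is neither needed nor justified by $k(0,e)<0$ alone (positivity of $c^*(e)$ is only guaranteed under the stronger hypothesis $\lambda_1<0$), and the equi-Lipschitz step for the concave family $\{k(\cdot,e)\}_e$ also requires a uniform-in-$e$ \emph{lower} bound on compact $\lambda$-intervals, which is immediate from \eqref{max-min} tested with the constant function $\bm{1}$.
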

\begin{remark}\label{sufficient-condition}
  From \cite[Proposition 2.2]{griette2021propagation}, two sufficient conditions for $\lambda_1=\lambda^p_1$ to hold can be summarized as follows:
\begin{itemize}
  \item For each $i\in\{1,\ldots,d\}$, the operator $L^i$ is in divergence form \eqref{divergence} and has no advection, that is, $q^i(x)\equiv\bm{0}$.
      Moreover, the matrix field $H$ in \eqref{nonsymmetric-periodic-PE} is symmetric.
  \item For each $i\in\{1,\ldots,d\}$, the operator $L^i$ is in nondivergence form \eqref{nondivergence}.
      Furthermore, the diffusion matrix field $x\mapsto A^i(x)$ in $L^i$ and the function $x\mapsto H(x)$ in \eqref{nonsymmetric-periodic-PE} are even, while the advection vector field $x\mapsto q^i(x)$ in $L^i$ is odd.
\end{itemize}
Indeed, one can prove that the function $\lambda\mapsto k(\lambda,e)$ is even provided one of the above conditions is fulfilled.
Hence its global maximum over $\mathbb{R}$ is necessarily attained at $\lambda=0$ due to
the strict concavity of $\lambda\mapsto k(\lambda,e)$.
We then have $\lambda_1=k(0,\cdot)=\lambda^p_1$ by \eqref{maximum-periodic-PE}.
However, a very interesting counter-example was constructed by Griette and Matano  \cite{griette2021propagation} for which $\lambda^p_1<\lambda_1$ holds, see Remark 4.2 and Appendix of \cite{griette2021propagation} for details.
Nevertheless, due to the smoothness assumptions of \ref{item:coefficient}, our proof
only needs to handle the operators defined by \eqref{nondivergence}.
\end{remark}
\section{Qualitative properties of pulsating waves}\label{sect:qualitative-estimates}
This section is devoted to the proof of Theorem \ref{main-result-qualitative-property}.
\subsection{Lower bound on the wave speed}
We prove here that the speed $c$ of pulsating waves for \eqref{monotone-system} is always bounded from below by the quantity $c^*(e)$,
which especially implies that the system \eqref{monotone-system} has no such solutions for all $c<c^*(e)$.
The proof of the first part of Theorem \ref{main-result-qualitative-property}
is given by the following proposition.
\begin{proposition}\label{lower-bound-speed}
Let $u=(u_1,\ldots,u_d)^T(t,x)$ be a pulsating travelling wave of \eqref{monotone-system} propagating in the direction $e\in\mathbb{S}^{N-1}$ with speed $c\neq0$.
Using the change of variables
\begin{equation}\label{moving-coordinate}
U(s,x)=u\left(\frac{x\cdot e-s}{c},x\right)~~
\text{ for $s\in\mathbb{R}$ and $x\in\mathbb{R}^N$},
\end{equation}
then each component $U_i$ of $U=(U_1,\ldots,U_d)^T$ satisfies
\begin{align*}
-\infty<\underline{\mu}\leq\liminf_{s\to+\infty}
\left(\min_{x\in\mathbb{R}^N}\frac{-\partial_s U_i(s,x)}
{U_i(s,x)}\right)\leq
\limsup_{s\to+\infty}\left(\max_{x\in\mathbb{R}^N}
\frac{-\partial_s U_i(s,x)}{U_i(s,x)}\right)
\leq\overline{\Lambda}<+\infty,
\end{align*}
where $\underline{\mu}$ and $\overline{\Lambda}$ are defined as
\begin{equation}\label{min-max-limits}
\underline{\mu}=\min_{1\leq i\leq d}\mu_i~\text{ and }~~
\overline{\Lambda}=\max_{1\leq i\leq d}\Lambda_i
\end{equation}
with
\begin{equation}\label{derivative-lower-upper-limits}
\mu_i:=\liminf_{s\to+\infty}\left(\min_{x\in\mathbb{R}^N}
\frac{-\partial_s U_i(s,x)}{U_i(s,x)}\right)
~\text{ and }~~
\Lambda_i:=\limsup_{s\to+\infty}\left(\max_{x\in\mathbb{R}^N}
\frac{-\partial_s U_i(s,x)}{U_i(s,x)}\right).
\end{equation}

Assume further that $\lambda^p_{1}<0$. Then both $\underline{\mu}$ and
$\overline{\Lambda}$ are positive and solve \eqref{char-equa}.
In particular, one has
$$
c\geq c^*(e),
$$
where $\lambda_1^p$ and $c^*(e)$ are given by \eqref{periodic-threshold} and \eqref{formula-minimal-speed}, respectively.
\end{proposition}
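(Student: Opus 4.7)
The plan is to analyse $U$ in the moving coordinate, first establishing the finite bounds on the logarithmic slopes via Harnack-type estimates, and then identifying $\underline{\mu}$ and $\overline{\Lambda}$ as positive roots of $k(\lambda,e)+c\lambda=0$ through a blow-up/compactness procedure inspired by Hamel~\cite{hamel2008qualitative} and Berestycki et al.~\cite{berestycki2005analysis-2}. The bound $c\geq c^*(e)$ will then follow at once from Proposition~\ref{minimal-speed}(iii).

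\emph{Setup and finite bounds.} Substituting $t=(x\cdot e-s)/c$ into \eqref{monotone-system} shows that each component $U_i$ obeys a uniformly elliptic (in the full variable $(s,x)\in\mathbb{R}\times\mathbb{R}^N$) equation of the form
\begin{equation*}
-(e^{T}\!A^{i}e)\,\partial_{ss}^2 U_i - 2(A^i e)\cdot\nabla_x\partial_s U_i - \tr\bigl(A^i D_x^2 U_i\bigr) + q^i\cdot\nabla_x U_i + (q^i\cdot e - c)\,\partial_s U_i = f_i(x,U),
\end{equation*}
while the pulsating condition \eqref{pulsating-condition} forces $U(s,x+k)=U(s,x)$ for every $k\in\mathbb{Z}^N$. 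Hence $U$ may be regarded as a positive function on $\mathbb{R}\times\mathbb{T}^N$ satisfying $U(s,\cdot)\to\bm 0$ as $s\to+\infty$ and $\liminf_{s\to-\infty}U(s,\cdot)\gg\bm 0$, uniformly on $\mathbb{T}^N$. Since $U$ and $f$ are bounded with $f\in C^1$, each $U_i$ in particular solves a linear uniformly elliptic equation with $L^\infty$ coefficients; applying the Harnack estimates of Appendix~\ref{appendix:Harnack-inequa} on slabs $[s-1,s+2]\times\mathbb{T}^N$ yields a constant $C_0>0$, independent of $s$, with $\sup U_i\le C_0\inf U_i$ on each slab, and interior Schauder estimates upgrade this to $|\partial_s U_i(s,x)|\le C_1 U_i(s,x)$ uniformly, giving $\overline{\Lambda}<+\infty$ and $\underline{\mu}>-\infty$.

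\emph{Identification of the rates.} Assume $\lambda_1^p<0$. Pick $i_0$ with $\Lambda_{i_0}=\overline{\Lambda}$ and sequences $s_n\to+\infty$, $x_n\to x_\infty\in\mathbb{T}^N$ (up to extraction) realising the limsup, and normalise $V_n(s,x):=U(s+s_n,x)/U_{i_0}(s_n,x_n)$. The Harnack bound keeps $(V_n)$ locally uniformly bounded; since $U_{i_0}(s_n,x_n)\to 0$ and $f(x,u)=D_uf(x,\bm 0)u+o(|u|)$ near $\bm 0$, the nonlinear term vanishes in the limit, so interior Schauder estimates extract a subsequential $C^2_{\mathrm{loc}}$ limit $V$ on $\mathbb{R}\times\mathbb{T}^N$ solving the linearised system at $\bm 0$. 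This $V$ is non-trivial since $V_{n,i_0}(0,x_n)=1$, hence componentwise strictly positive by the strong maximum principle for fully coupled cooperative systems (cf.~\ref{item:irreducibility}). By construction $-\partial_s V_{i_0}/V_{i_0}$ is bounded above by $\overline{\Lambda}$ on $\mathbb{R}\times\mathbb{T}^N$ and attains this value at the interior point $(0,x_\infty)$. Set $W(s,x):=e^{\overline{\Lambda} s}V(s,x)$, so that $\partial_s W_{i_0}\ge 0$ with a zero at $(0,x_\infty)$. Because the coefficients of the linearised problem are $s$-independent, $\partial_s W$ solves the same linear cooperative fully coupled elliptic system as $W$, and the strong maximum principle therefore forces $\partial_s W\equiv\bm 0$. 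This yields the factorisation $V(s,x)=e^{-\overline{\Lambda} s}\phi(x)$ with $\phi\in C^2(\mathbb{T}^N)^d$ and $\phi\gg\bm 0$; substituting this ansatz reduces the linearised system to $(L_{\overline{\Lambda}e}-D_uf(x,\bm 0))\phi=-c\overline{\Lambda}\,\phi$ in $\mathbb{T}^N$, and the uniqueness of the periodic principal eigenpair (Proposition~\ref{periodic-principal-eigenvalue-properties}\,(i)) gives $k(\overline{\Lambda},e)+c\overline{\Lambda}=0$. An entirely analogous argument with sequences realising the infimum produces $k(\underline{\mu},e)+c\underline{\mu}=0$. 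Finally, $\overline{\Lambda}\ge 0$ (otherwise $U_i$ would eventually be increasing in $s$, contradicting $U(s,\cdot)\to\bm 0$) and $\overline{\Lambda}\neq 0$ (since $k(0,e)=\lambda_1^p<0$ precludes $0$ from being a root); the strict concavity of $\lambda\mapsto k(\lambda,e)$ combined with $k(0,e)<0$ forces the two real roots of $k(\lambda,e)+c\lambda=0$ to share the same sign, so $\underline{\mu}>0$ as well. Proposition~\ref{minimal-speed}(iii) then delivers $c\geq c^*(e)$.

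The main obstacle will be the rigidity argument yielding the factorisation $V(s,x)=e^{-\overline{\Lambda} s}\phi(x)$: one has to verify carefully that $\partial_s W$ indeed satisfies a linear cooperative fully coupled system (so that the strong maximum principle for systems applies) and that the single interior zero of $\partial_s W_{i_0}$ propagates to every component, exploiting the full coupling hypothesis~\ref{item:irreducibility}.
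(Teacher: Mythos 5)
Your overall architecture (finiteness of the logarithmic rates via Harnack, blow-up/normalisation to a solution of the linearised problem, rigidity via the strong maximum principle and full coupling, then strict concavity of $k(\cdot,e)$) is exactly the paper's, but there is a genuine gap at the very first step that then propagates through the argument: the equation you write for $U$ in the variables $(s,x)$ is \emph{not} uniformly elliptic. Its principal symbol is $(\xi_0 e+\xi')^{T}A^i(\xi_0 e+\xi')$, which vanishes on the line $\xi'=-\xi_0 e$; this is the well-known degeneracy of the moving-frame formulation of pulsating fronts. Consequently the elliptic Harnack inequality of Appendix~\ref{appendix:Harnack-inequa} (which is stated for the operators $L^i$ in the spatial variable only), the interior Schauder estimates you use to extract the limit $V$, and the strong maximum principle you invoke to conclude $\partial_s W\equiv\bm 0$ all fail to apply as stated to this degenerate operator. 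The bound $|\partial_sU_i|\le C\,U_i$ and the rigidity $V(s,x)=e^{-\overline{\Lambda}s}\phi(x)$ are true, but your justification of them does not go through.

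The paper's way around this is to stay in the original parabolic variables $(t,x)$: one applies the \emph{parabolic} Harnack inequality (Theorem~\ref{Har-ineq}) to the system \eqref{lin-equ}, which compares the supremum over a past cylinder with the infimum over a \emph{future} time slab, and then uses the pulsating relation \eqref{pulsating-condition} to convert that future time into a spatial shift by some $k_0\in\mathbb{Z}^N$ at the present time; combined with interior parabolic Schauder estimates this yields \eqref{global-estimate}, which is then transported to $U$ through \eqref{moving-coordinate}. The same device is needed again in the identification step: the normalised sequence $v^n(t,x)=u(t+t_n,x)/u_{i_0}(t_n,x_n)$ converges by \emph{parabolic} estimates to an entire solution $v^\infty$ of the linearised parabolic system, and the function $w=\partial_tv^\infty-c\underline{\mu}\,v^\infty$ is killed by the \emph{parabolic} strong maximum principle on $\{t\le t_0\}$ together with the pulsating relation \eqref{v-infty-pulsating} (which extends the vanishing to all times) and the full coupling of $D_uf(x,\bm 0)$ (which extends it to all components). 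If you reroute your Harnack, compactness and maximum-principle steps through the $(t,x)$ picture in this way, the rest of your argument — including the sign discussion at the end — is correct.
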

\begin{proof}
The proof relies on arguments that are close to the ones used by Hamel
\cite[Proposition 2.2]{hamel2008qualitative}.
Here is actually a generalization towards reaction-diffusion systems.

We first prove that two quantities $\underline{\mu}$ and $\overline{\Lambda}$ in \eqref{min-max-limits} are well-defined.
To this end, let us rewrite the system \eqref{monotone-system} as
\begin{equation}\label{lin-equ}
\partial_{t}u(t,x)+Lu(t,x)=H(t,x)u(t,x),~~~t\in\mathbb{R},~x\in\mathbb{R}^{N},
\end{equation}
where the matrix-valued function
$H=(h_{ij})_{1\leq i,j\leq d}:\mathbb{R}\times\mathbb{R}^N\to\mathcal{M}_d(\mathbb{R})$
is defined by
\begin{equation}\label{H-def}
h_{ij}(t,x)=\int_{0}^{1}\frac{\partial f_i}{\partial u_j}
\left(x,su_1(t,x),\ldots,su_d(t,x)\right)\mathrm{d}s.
\end{equation}
By assumption \ref{item:smoothness} on $f$ coupled with the boundedness and regularity of $u$ as a pulsating wave for \eqref{monotone-system},
it is clearly true that all entries of $H$ are globally bounded and of class $C^{\alpha/2,\alpha}$.
Fix any $(t_0,x_0)\in\mathbb{R}\times\mathbb{R}^N$ and
let $P_r(t_0,x_0):=[t_0-r^2,t_0]\times \overline{B_r(x_0)}$ be the closed parabolic cylinder of radius $r$, height $r^2$, and top center $(t_0,x_0)$.
From the Schauder interior estimates for parabolic systems
(see \cite[Chap.\uppercase\expandafter{\romannumeral7}]{ladyvzenskaja1988linear} or \cite{schlag1996schauder}), there exists $C_r>0$ independent of $(t_0,x_0)$ such that
$$
\forall r\in(0,1),~~\|u\|_{C^{1+{\alpha}/{2},2+\alpha}(P_{r})^d}\leq C_r\|u\|_{L^\infty(P_1)^d}.
$$
In particular, one has, for all $(t_0,x_0)\in\mathbb{R}\times\mathbb{R}^N$ and for each $i\in\{1,\ldots,d\}$,
\begin{equation}\label{inter-esti}
 |\partial_tu_i(t_0,x_0)|+|\nabla u_i(t_0,x_0)|
\leq C\max_{1\leq j\leq d}\max_{(t,x)\in P_{1}}u_j(t,x).
\end{equation}
Let now $\theta_c>1$ be given where $c\neq0$ is the speed of the pulsating wave $u$ of \eqref{monotone-system}.
By our assumptions \ref{item:monotonicity} and \ref{item:irreducibility}, the matrix $H$ given by \eqref{H-def} is cooperative and fully coupled.
Applying the Harnack inequality from Theorem \ref{Har-ineq} to \eqref{lin-equ}, we then deduce that there exists $\widehat\kappa_{\theta_c}>0$ independent of $(t_0,x_0)$ such that
\begin{align*}
\max_{1\leq j\leq d}\max_{(t,x)\in P_{1}}u_j(t,x)&\leq
\max_{1\leq j\leq d}\max_{(t,x)\in [t_0-\theta,t_0]\times[x_0-\theta_c,x_0+\theta_c]^N}u_j(t,x)\\
&\leq
\widehat\kappa_{\theta_c}\min_{1\leq j\leq d}
\min_{(t,x)\in\left[t_0+\frac{3\theta_c}{2},t_0+2\theta_c\right]\times
[x_0-\theta_c,x_0+\theta_c]^N}u_j(t,x).
\end{align*}
Choose a vector $k_0\in\mathbb{Z}^N$ such that $3\theta_c/2\leq(k_0\cdot e)/c\leq 2\theta_c$ and $|k_0|\leq\theta_c$
(increasing the value of $\theta_c$ if necessary, such a vector always exists).
Then we have that
$$
\forall(t_0,x_0)\in\mathbb{R}\times\mathbb{R}^N,~~
\max_{1\leq j\leq d}\sup_{(t,x)\in P_{1}}u_j(t,x)\leq
\widehat\kappa_{\theta_c}
u_i\left(t_0+\frac{k_0\cdot e}{c},x_0+k_0\right),
~~i=1,\ldots,d.
$$
Using the pulsating properties \eqref{pulsating-condition} of the wave $u$ together with \eqref{inter-esti}, one gets
\begin{equation}\label{global-estimate}
  \sup_{(t_0,x_0)\in\mathbb{R}\times\mathbb{R}^N}
  \left(\frac{|\partial_tu_i(t_0,x_0)|}{u_i(t_0,x_0)}
  +\frac{|\nabla u_i(t_0,x_0)|}{u_i(t_0,x_0)}\right)
  \leq \widehat\kappa_{\theta_c}C<+\infty,~~~i=1,\ldots,d.
\end{equation}
Observe from \eqref{moving-coordinate} that
$$
  \frac{-\partial_s U_i(s,x)}{U_i(s,x)}=
  \frac{\partial_t u_i\big((x\cdot e-s)/c,x\big)}
  {cu_i\big((x\cdot e-s\big)/c,x\big)},~~~~i=1,\ldots,d.
$$
Hence \eqref{global-estimate} ensures that the function $\partial_sU_i/U_i$ for $i\in\{1,\ldots,d\}$ is globally bounded in $\mathbb{R}\times\mathbb{R}^N$.
As a consequence, all the quantities $\mu_i$ and $\Lambda_i$, defined in \eqref{derivative-lower-upper-limits},  are real numbers.

Our next aim is to prove that both $\underline{\mu}$ and $\overline{\Lambda}$, defined in \eqref{min-max-limits}, are positive.
By \eqref{pulsating-condition}, the function $U:\mathbb{R}\times\mathbb{R}^N\to\mathbb{R}^d$ defined by \eqref{moving-coordinate} is $\mathbb{Z}^N$-periodic with respect to the second variable.
Moreover, it satisfies
\begin{equation}\label{U-positivity-limit}
  U(+\infty,\cdot)=\bm{0}~\text{ and }~
  U(s,x)\gg\bm{0}~\text{ for all }(s,x)\in\mathbb{R}\times\mathbb{R}^N.
\end{equation}
From the definition of $\underline{\mu}$ in \eqref{min-max-limits} and using the periodicity of $U$, there exist $i_0\in\{1,\ldots,d\}$ and a sequence $\{(s_n,x_n)\}_{n\in\mathbb{N}}$ such that
\begin{equation}\label{limit-derivative-minimal-bound}
  \text{$x_n\in\mathbb{T}^N$, $s_n\to+\infty$ ~and~ }
  \frac{-\partial_s U_{i_0}(s_n,x_n)}{U_{i_0}(s_n,x_n)}
  \to\underline{\mu}~~\mbox{ as }n\to\infty.
\end{equation}
By the compactness of $\mathbb{T}^N$, one may assume, up to extracting a subsequence, that $x_n\to x_\infty\in\mathbb{T}^N$ as $n\to\infty$.
Define
\begin{equation}\label{vn-def}
  v^n(t,x)=\frac{u(t+t_n,x)}{u_{i_0}(t_n,x_n)}~
  \text{ with }t_n=\frac{x_n\cdot e-s_n}{c}.
\end{equation}
For each $n\in\mathbb{N}$, each component of $v^n$ satisfies
\begin{equation}\label{vn-equ}
  \partial_tv^n_i(t,x)+L^iv^n_i(t,x)-
  \frac{f_i(x,u(t+t_n,x))}{u_i(t+t_n,x)}v^n_i(t,x)=0,~~~i=1,\ldots,d,
\end{equation}
where the operator $L^i$ is given by \eqref{nondivergence}.
Moreover, since $ct_n=x_n\cdot e-s_n\to-\infty$ as $n\to\infty$ by \eqref{limit-derivative-minimal-bound},
it follows from \eqref{pulsating-condition} and \eqref{limiting-condition} that
\begin{equation}\label{time-shift}
 \lim_{n\to\infty}u(t+t_n,x)=\bm{0}~~
  \text{ locally uniformly for $(t,x)\in\mathbb{R}\times\mathbb{R}^N$.}
\end{equation}
Now, using Taylor's expansion for each $f_i$ and by assumption \ref{item:smoothness}, one has
\begin{align*}
  \frac{f_i(x,u(t+t_n,x))}{u_i(t+t_n,x)}
  =&\frac{\partial f_i(x,\bm{0})}{\partial u_1}
  \frac{u_1(t+t_n,x)}{u_i(t+t_n,x)}
  +\cdots+
  \frac{\partial f_i(x,\bm{0})}{\partial u_i}+\cdots
+\frac{\partial f_i(x,\bm{0})}{\partial u_d}
  \frac{u_d(t+t_n,x)}{u_i(t+t_n,x)}\\
  &+\frac{1}{2}\frac{u^T(t+t_n,x)D_u^2f_i(x,\bm{0})\,u(t+t_n,x)}{u_i(t+t_n,x)}
  +\frac{o(|u(t+t_n,x)|^2)}{u_i(t+t_n,x)}.
\end{align*}
Recalling the definition \eqref{vn-def} of $v_n$,
the above formula applied in \eqref{vn-equ} then yields
$$
\partial_tv^n_i+L^iv^n_i-\sum_{j=1}^{d}\frac{\partial f_i(x,\bm{0})}{\partial u_j}v^n_j
  =\frac{1}{2}\frac{u^T(t+t_n,x)D_u^2f_i(x,\bm{0})\,u(t+t_n,x)}{u_{i_0}(t_n,x_n)}+
  \frac{o(|u(t+t_n,x)|^2)}{u_{i_0}(t_n,x_n)}
$$
for all $i\in\{1,\ldots,d\}$.
Note also that from \eqref{global-estimate}, the functions $v^n(t,x)=u(t+t_n,x)/u_{i_0}(t_n,x_n)$ are locally bounded in $\mathbb{R}\times\mathbb{R}^N$.
Combining this fact with \eqref{time-shift}, and from standard parabolic estimates, $v^n$ converges in $C^{1,2}_{\rm loc}(\mathbb{R}\times\mathbb{R}^N)^d$, up to a diagonal extraction of a subsequence,  to a nonnegative entire solution $v^\infty$ to the system
\begin{equation}\label{v-infty-equ}
\partial_tv^\infty(t,x)+Lv^\infty(t,x)-D_uf(x,\bm{0})
v^\infty(t,x)=\bm{0}~~\text{ for }(t,x)\in\mathbb{R}\times\mathbb{R}^N.
\end{equation}
Since $u$ is a pulsating wave according to Definition \ref{pulsating-wave-def}, the function $v^n$ defined in \eqref{vn-def} satisfies the same pulsating relations as in \eqref{pulsating-condition} and $v^n\gg\bm{0}$.
Therefore, the above convergence also yields $v^\infty\geq\bm{0}$ and
\begin{equation}\label{v-infty-pulsating}
\forall(t,x,k)\in\mathbb{R}\times\mathbb{R}^N\times\mathbb{Z}^N,~~
v^\infty\left(t+\frac{k\cdot e}{c},x\right)=v^\infty(t,x-k).
\end{equation}
In particular, since $D_uf(x,\bm{0})$ is cooperative, we have
$$
\begin{dcases}
\partial_tv^\infty_{i_0}+L^{i_0}v^\infty_{i_0}-
\frac{\partial f_{i_0}(x,\bm{0})}{\partial u_{i_0}}v^\infty_{i_0}=
\sum_{j=1,\,j\neq i_0}^{d}\frac{\partial f_{i_0}(x,\bm{0})}
{\partial u_j}v^\infty_j \geq0\text{ ~in }\mathbb{R}\times\mathbb{R}^N,\\
\forall(t,x)\in\mathbb{R}\times\mathbb{R}^N,~~v^\infty_{i_0}(t,x)\geq0,
~~v^\infty_{i_0}(0,x_\infty)=1,
\end{dcases}
$$
where the operator $L^{i_0}$ is given by \eqref{nondivergence} with $i=i_0$.
It then follows from the strong maximum principle that $v^\infty_{i_0}$ is positive everywhere in $\mathbb{R}\times\mathbb{R}^N$.
Since $D_uf(x,\bm{0})$ is also fully coupled, applying the Harnack inequality from Theorem \ref{Har-ineq} together with the pulsating property of $v^\infty$,
one can easily deduce that all other components of $v^\infty$ are also positive everywhere,
that is, $v^\infty(t,x)\gg\bm{0}$ for all $(t,x)\in\mathbb{R}\times\mathbb{R}^N$.

On the other hand, from \eqref{vn-def}, one has, for each $i\in\{1,\ldots,d\}$,
$$
\frac{\partial_t v^n_i(t,x)}{v^n_i(t,x)}=
\frac{\partial_tu_i(t+t_n,x)}{u_i(t+t_n,x)}=c\times
\frac{-\partial_s U_i\left(x\cdot e-c(t+t_n),x\right)}
{U_i\left(x\cdot e-c(t+t_n),x\right)},
~~(t,x)\in\mathbb{R}\times\mathbb{R}^N.
$$
Since $ct_n=x_n\cdot e-s_n\to-\infty$ as $n\to\infty$, we infer from the definition of the quantity $\underline{\mu}$ in \eqref{min-max-limits} and  \eqref{derivative-lower-upper-limits}  that for all $(t,x)\in\mathbb{R}\times\mathbb{R}^N$,
\begin{equation}\label{v-infty-derivative-bound}
\begin{split}
\left(\frac{\partial_t v^\infty_1(t,x)}{v^\infty_1(t,x)},\cdots,
\frac{\partial_tv^\infty_d(t,x)}{v^\infty_d(t,x)}\right)
\geq c\underline{\mu}\bm{1}~~
\text{ if }c>0,\\
\left(\frac{\partial_tv^\infty_1(t,x)}{v^\infty_1(t,x)},\cdots,
\frac{\partial_tv^\infty_d(t,x)}{v^\infty_d(t,x)}\right)
\leq c\underline{\mu}\bm{1}~~
\text{ if }c<0,
\end{split}
\end{equation}
while ${\partial_tv^\infty_{i_0}(0,x_\infty)}/{v^\infty_{i_0}(0,x_\infty)}=c\underline{\mu}$
due to \eqref{limit-derivative-minimal-bound}.
Let us now prove that $\partial_tv^\infty\equiv c\underline{\mu}v^\infty$.
For that purpose, we define a function $w=(w_1,\ldots,w_d)^T$ for $(t,x)\in\mathbb{R}\times\mathbb{R}^N$ as
$$
w(t,x)=\partial_tv^\infty(t,x)- c\underline{\mu}v^\infty(t,x).
$$
It is straightforward to check that $w$ satisfies the same equation \eqref{v-infty-equ} as $v^\infty$.
Due to $v^\infty\gg\bm{0}$ and \eqref{v-infty-derivative-bound}, one has that
\begin{equation}\label{w-sign}
\forall(t,x)\in\mathbb{R}\times\mathbb{R}^N,~~~
w(t,x)\geq\bm{0}~(\text{resp.}~\leq\bm{0})
~~~\text{if }c>0~(\text{resp. }\text{if }c<0).
\end{equation}
Note also that
$$
w_{i_0}(0,x_\infty)=\partial_tv^\infty_{i_0}(0,x_\infty)-
c\underline{\mu}v^\infty_{i_0}(0,x_\infty)=0.
$$
Therefore, $w_{i_0}$ reaches its minimum or maximum (depending on the sign of $c$) $0$ at $(0,x_\infty)$.
From the strong maximum principle and using the pulsating relations of $v_{i_0}^\infty$, one can deduce that $w_{i_0}(t,x)=0$ for all $(t,x)\in\mathbb{R}\times\mathbb{R}^N$.
As argued above, the fact that $D_uf(x,\bm{0})$ is fully coupled further yields  $w=\bm{0}$.

In other words, one gets $\partial_tv^\infty\equiv c\underline{\mu}v^\infty$.
Since $v^\infty$ is a solution of \eqref{v-infty-equ},
it can be written as
$$
v^\infty(t,x)=e^{-\underline{\mu}(x\cdot e-ct)}\psi(x).
$$
where $\psi:=(\psi_1,\ldots,\psi_d)$ satisfies
$$
-L_{\underline{\mu}e}\psi+D_uf(x,\bm{0})\psi
=c\underline{\mu}\psi
\text{ ~in }\mathbb{T}^N.
$$
Here the operator
$L_{\underline{\mu}e}:={\rm diag}(L^1_{\underline{\mu}e},\ldots,L^d_{\underline{\mu}e})$ is  given by \eqref{modified-operator} with $\lambda=\underline{\mu}$.
By \eqref{v-infty-pulsating} and $v^\infty\gg\bm{0}$, it is easy to verify that $\psi$ is $\mathbb{Z}^N$-periodic and $\psi\gg\bm{0}$.
We then infer from Proposition \ref{periodic-principal-eigenvalue-properties} that
$$
-k\big(\underline{\mu},e\big)=c\underline{\mu}.
$$
As for the quantity $\overline{\Lambda}$ defined in \eqref{min-max-limits}, replacing $\underline{\mu}$ by $\overline{\Lambda}$ and reversing inequalities in \eqref{v-infty-derivative-bound}, one can similarly repeat the above proof to get
$$
-k\big(\overline{\Lambda},e\big)=c\overline{\Lambda}.
$$
Since $\lambda\mapsto k(\lambda,e)$ is strictly concave,
$k(0,e)=\lambda^p_{1}<0$  and $c\neq0$, it follows that $\underline{\mu}$ and $\overline{\Lambda}$ cannot be zero and have the same sign.
From the definitions of $\Lambda_i$ in \eqref{derivative-lower-upper-limits} and $\overline{\Lambda}$ in \eqref{min-max-limits} together with \eqref{U-positivity-limit},
we conclude that both $\underline{\mu}$ and $\overline{\Lambda}$ are positive.

Finally, from Proposition \ref{minimal-speed}, one has
$$
c=\frac{-k\big(\overline{\Lambda},e\big)}{\overline{\Lambda}}=
\frac{-k\big(\underline{\mu},e\big)}{\underline{\mu}}
\geq\min_{\lambda>0}\frac{-k(\lambda,e)}{\lambda}=c^*(e).
$$
This ends the proof of Proposition \ref{lower-bound-speed}.
\end{proof}
\subsection{Monotonicity of the wave with respect to time}
We prove here that if the nonlinearity is subhomogeneous, then any pulsating travelling wave of \eqref{monotone-system} is strictly monotonic in time.

The following result corresponds to the second part of Theorem \ref{main-result-qualitative-property}.
\begin{proposition}\label{monotonocity-proof}
Assume that \eqref{subhomogeneity} holds.
Then any pulsating wave $(c,u)$ of \eqref{monotone-system} is componentwise increasing in $t$ if $c>0$ and decreasing if $c<0$.
Moreover, one has $c\partial_t u(t,x)\gg\bm{0}$ for all $(t,x)\in\mathbb{R}\times\mathbb{R}^N$.
\end{proposition}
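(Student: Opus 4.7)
Without loss of generality I take $c>0$; the case $c<0$ is analogous via a time-reversal. I pass to the moving-frame profile $U(s,x):=u((x\cdot e-s)/c,x)$, which by \eqref{pulsating-condition} is $\mathbb{Z}^N$-periodic in $x$ and solves a uniformly elliptic system in $(s,x)\in\mathbb{R}\times\mathbb{T}^N$ (the principal part in $s$ comes from the $(e,\nabla_x)$ decomposition of $L$). The limits \eqref{limiting-condition} become $\lim_{s\to+\infty}U(s,\cdot)=\mathbf{0}$ and $\liminf_{s\to-\infty}U(s,\cdot)\gg\mathbf{0}$, both uniformly in $x\in\mathbb{T}^N$. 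Since $\partial_tu=-c\,\partial_sU$, the conclusion $c\partial_tu\gg\mathbf{0}$ is equivalent to showing that $U$ is strictly componentwise decreasing in $s$, i.e.\ $U(s-\rho,x)\gg U(s,x)$ for every $\rho>0$ and $(s,x)\in\mathbb{R}\times\mathbb{T}^N$.

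I prove this monotonicity by a sliding argument in $s$. For $\rho>0$, set $U^\rho(s,x):=U(s-\rho,x)$ and
\[
\theta^*(\rho):=\sup\bigl\{\theta\in[0,1]:\theta U\leq U^\rho\text{ on }\mathbb{R}\times\mathbb{T}^N\bigr\}.
\]
The first step is to show $\theta^*(\rho)>0$. Applying the Harnack inequality of Theorem~\ref{Har-ineq} to $U$ (a positive solution of a cooperative, fully coupled linear system with bounded coefficients) on the compact cylinder $[s_0-1,s_0+\rho+1]\times\mathbb{T}^N$, and using that the coefficients are translation-invariant in $s$ (so the Harnack constant is uniform in $s_0$), I obtain $U(s,x)\leq\kappa(\rho)\,U(s-\rho,x)$ everywhere, so $\theta^*(\rho)\geq 1/\kappa(\rho)>0$.

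The main step is to show $\theta^*(\rho)=1$. Arguing by contradiction, suppose $\theta^*(\rho)\in(0,1)$. By subhomogeneity \eqref{subhomogeneity} and cooperativity \ref{item:monotonicity}, the function $\tilde w:=U^\rho-\theta^*(\rho)U\geq\mathbf{0}$ satisfies a cooperative linear elliptic supersolution inequality on $\mathbb{R}\times\mathbb{T}^N$. The definition of $\theta^*$ as a supremum produces a sequence $(s_n,x_n)$ along which, in a suitable normalization, $\tilde w$ becomes arbitrarily small in at least one component. Extracting $x_n\to x_\infty\in\mathbb{T}^N$, three cases arise. (i) If $(s_n)$ is bounded with $s_n\to s_\infty$, the limit is an interior point and the strong maximum principle (via \ref{item:irreducibility}) forces $\tilde w\equiv\mathbf{0}$, i.e.\ $U^\rho\equiv\theta^*(\rho)U$, contradicting $\liminf_{s\to-\infty}U\gg\mathbf{0}$ and $\theta^*(\rho)<1$. (ii) If $s_n\to-\infty$, the uniform lower bound $U(s,\cdot)\geq\delta\mathbf{1}$ for $s\ll0$ together with compactness under shifts leads to the same contradiction. (iii) If $s_n\to+\infty$, I normalize $V^n(s,x):=U(s+s_n,x)/|U(s_n,x_n)|_\infty$; local uniform boundedness (Harnack) and elliptic regularity provide a limit $V^\infty\geq\mathbf{0}$ that is nontrivial and $\mathbb{Z}^N$-periodic in $x$ and solves the linearization $\mathcal{L}_0V^\infty=D_uf(x,\mathbf{0})V^\infty$. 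Passing to the limit in the contact inequality and applying the strong maximum principle with full coupling \ref{item:irreducibility} yields $V^\infty(s-\rho,x)\equiv\theta^*(\rho)V^\infty(s,x)$. But by Proposition~\ref{periodic-principal-eigenvalue-properties} and Proposition~\ref{minimal-speed}, any positive, periodic-in-$x$ solution of the linearized system whose shift by $\rho$ is a scalar multiple of itself is of the separated form $\phi_{\lambda e}(x)e^{-\lambda s}$ with $\lambda>0$ a root of $k(\lambda,e)+c\lambda=0$; hence $V^\infty(s-\rho,x)/V^\infty(s,x)=e^{\lambda\rho}>1>\theta^*(\rho)$, the desired contradiction.

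Therefore $\theta^*(\rho)=1$ for every $\rho>0$, so $U^\rho\geq U$ and $\partial_tu\geq\mathbf{0}$. To upgrade to strict positivity, $z:=\partial_tu$ is well-defined by the Schauder regularity from \ref{item:smoothness} and solves the cooperative linear system $\partial_tz+Lz=D_uf(x,u)z$; the strong maximum principle together with \ref{item:irreducibility} then promotes $z\geq\mathbf{0}$ to $c\partial_tu\gg\mathbf{0}$ everywhere. The main obstacle is case~(iii) of the sliding step, where I must rigorously identify $V^\infty$ with a single principal-eigenfunction mode of the linearization at $\mathbf{0}$ and pin down its decay rate through the eigenvalue equation $k(\lambda,e)+c\lambda=0$ of Proposition~\ref{minimal-speed}; controlling the ratio $V^\infty(s-\rho,x)/V^\infty(s,x)$ at the unstable tail is what actually produces the contradiction and is where the use of subhomogeneity (rather than mere cooperativity) is essential.
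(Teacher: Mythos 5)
Your overall strategy (a sliding comparison between $U$ and its shift, subhomogeneity to turn the difference into a supersolution of a cooperative linear system, then the strong maximum principle plus full coupling) is the same as the paper's, but the way you set up the sliding creates a genuine gap exactly where you flag it. The paper never has to analyze the decaying tail $s\to+\infty$: it first invokes Proposition \ref{lower-bound-speed}, which gives $\liminf_{s\to+\infty}\min_x\bigl(-\partial_sU_i/U_i\bigr)>0$, hence $\partial_sU\ll\bm{0}$ on some half-line $[\bar s,\infty)$ for free; the comparison $\gamma U^\sigma\le U$ is then posed only on $(-\infty,R]\times\mathbb{T}^N$, where $U$ is uniformly positive, so the contact point is confined to the region where both functions are bounded away from $\bm{0}$ and the contradiction comes from the limits at $-\infty$. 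By sliding over all of $\mathbb{R}\times\mathbb{T}^N$ you are forced into your case (iii), and there the argument is incomplete: even granting $V^\infty(s-\rho,\cdot)\equiv\theta^*V^\infty(s,\cdot)$, the rigidity statement that a nonnegative, $x$-periodic solution of the linearization with this self-similarity is a single mode $e^{-\lambda s}\phi_{\lambda e}(x)$ is not proved, and your asserted contradiction $e^{\lambda\rho}>1$ presupposes $\lambda>0$, whereas the relation $V^\infty(s-\rho,\cdot)=\theta^*V^\infty(s,\cdot)$ with $\theta^*<1$ is, for a pure mode, the case $\lambda<0$ (growth in $s$); ruling that out requires precisely the positive lower bound on the logarithmic decay rate from Proposition \ref{lower-bound-speed} (which also tacitly brings in the hypothesis $\lambda_1^p<0$), and you never use it. Note also that subhomogeneity enters only through the inequality $\theta^*f(x,U)\le f(x,\theta^*U)$ needed for the comparison; it plays no role at the tail.

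Two smaller points. The system satisfied by $U$ in the $(s,x)$ variables is \emph{not} uniformly elliptic: its principal symbol is $(\xi_se+\xi_x)^TA^i(\xi_se+\xi_x)$, which vanishes on $\xi_x=-\xi_se$. So the Harnack inequality of Theorem \ref{Har-ineq} and the strong maximum principle must be applied to $u$ in the original parabolic variables $(t,x)$ and then transported back, as the paper does; as written, your appeals to Harnack and to the strong maximum principle "on $\mathbb{R}\times\mathbb{T}^N$" are not justified. Finally, in your cases (i)--(ii) the passage from a vanishing infimum to an actual contact point, and from one vanishing component to $\tilde w\equiv\bm{0}$, should be routed through the parabolic strong maximum principle together with the pulsating relation satisfied by $\tilde w$, exactly as in the paper's treatment of $w=U-\gamma^*U^\sigma$.
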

\begin{proof}
The proof follows from the arguments of Berestycki et al. \cite[Theorem 1.2]{berestycki2005analysis-2} in which the monotonicity of pulsating fronts for strong KPP equations was proved.

Let $u(t,x)$ be a pulsating travelling wave of \eqref{monotone-system} propagating along the direction $e\in\mathbb{S}^{N-1}$ with speed $c\neq0$.
With the change of variables of Proposition \ref{lower-bound-speed}, we consider the function $U=(U_1,\ldots,U_d)^T(s,x)$ for $(s,x)\in\mathbb{R}\times\mathbb{T}^N$ defined by
$$
U(s,x)=u(t,x)~
\text{ with }t=\frac{x\cdot e-s}{c}\in\mathbb{R}.
$$
Then it is continuous and positive.
By Proposition \ref{lower-bound-speed}, each component of $U$ satisfies
$$
\liminf_{s\to+\infty,\,x\in\mathbb{T}^N}
\frac{-\partial_s U_i(s,x)}{U_i(s,x)}>0,~~i=1,\ldots,d,
$$
and thus there exists $\bar{s}\in\mathbb{R}$ such that
$$
\forall s\geq\bar{s},~\forall x\in\mathbb{T}^N,~~~
\partial_s U_i(s,x)<0,~~i=1,\ldots,d.
$$
Moreover, since the limiting condition \eqref{limiting-condition} for $u(t,x)$ is equivalent to
\begin{equation}\label{U-LR-limits}
 \lim_{s\to+\infty}U(s,x)=\bm{0}~\text{ and }~
 \liminf_{s\to-\infty}U(s,x)\gg\bm{0}~
  \text{ uniformly in }\mathbb{T}^N,
\end{equation}
then by continuity, we have
\begin{equation}\label{positive-inf}
\inf_{s\leq\bar{s},\,x\in\mathbb{T}^N}U(s,x)\gg\bm{0}.
\end{equation}
As a consequence, there exists some $R\in\mathbb{R}$ such that $R\geq\bar{s}$ and
\begin{equation}\label{left-semi-interval-monotonicity}
\forall\sigma\geq0,~\forall s\geq R,~\forall x\in\mathbb{T}^N,~~
U^\sigma(s,x):=U(s+\sigma,x)\leq U(s,x).
\end{equation}
One can assume $R>0$.
In the sequel, fix any $\sigma\geq0$ and set
$$
\gamma^*:=
\sup\left\{\gamma\geq0\mid\forall(s,x)\in(-\infty,R]\times\mathbb{T}^N,~
\gamma U^\sigma(s,x)<U(s,x)\right\}.
$$
Since $U$ is bounded and $\inf_{(-\infty, R)\times\mathbb{T}^N}U^\sigma\gg\bm{0}$ by \eqref{positive-inf}, the quantity $\gamma^*$ is finite and positive.

One has to prove that $\gamma^*\geq1$.
To this end, we argue by contradiction and assume $\gamma^*<1$.
Since $U(-\infty,\cdot)\gg\bm{0}$ by \eqref{U-LR-limits} and from the definition of $\gamma^*$, there exist $(s_0,x_0)\in(-\infty,R]\times\mathbb{T}^N$ and $i_0\in\{1,\ldots,d\}$ such that
\begin{equation}\label{contact-point}
\gamma^*U^\sigma(s_0,x_0)\leq U(s_0,x_0)
~\text{ and }~
\gamma^*U_{i_0}^\sigma(s_0,x_0)=U_{i_0}(s_0,x_0).
\end{equation}
By continuity, we have
$$
\gamma^*U^\sigma(s,x)\leq U(s,x)~~
\text{ for all }(s,x)\in(-\infty,R]\times\mathbb{T}^N,
$$
and
$$
\gamma^*U^\sigma(s,x)\leq U(s,x)~~
\text{ for all }(s,x)\in[R,+\infty)\times\mathbb{T}^N
$$
due to $\gamma^*<1$ and \eqref{left-semi-interval-monotonicity}.
Coming back to the original variables $(t,x)\in\mathbb{R}\times\mathbb{R}^N$, we define a function $w=(w_1,\ldots,w_d)^T(t,x)$ as
\begin{equation}\label{w-def}
w(t,x)=U(x\cdot e-ct,x)-\gamma^*U^\sigma(x\cdot e-ct,x).
\end{equation}
Then, $w(t,x)\geq\bm{0}$ for all $(t,x)\in\mathbb{R}\times\mathbb{R}^N$. Moreover, $w$ satisfies
$$
\partial_t w(t,x)+Lw(t,x)=f(x,U(x\cdot e-ct,x))
-\gamma^*f(x,U^\sigma(x\cdot e-ct,x)).
$$
From assumption \eqref{subhomogeneity},
one has $\gamma^*f(x,U^\sigma)\leq f(x,\gamma^*U^\sigma)$ due to $0<\gamma^*<1$
and thus
$$
\partial_t w+Lw=f(x,U)-\gamma^*f(x,U^\sigma)\geq
f(x,U)-f(x,\gamma^*U^\sigma).
$$
Let us now examine the differential inequality satisfied by the $i_0$-th component of $w$:
$$
\partial_t w_{i_0}+L^{i_0}w_{i_0}\geq
f_{i_0}(x,U)-f_{i_0}\big(x,\gamma^*U^\sigma\big),
$$
where $L^{i_0}$ is given by \eqref{nondivergence} with $i=i_0$.
Since $f$ is quasimonotone and $w\geq\bm{0}$, then
$$
f_{i_0}(x,U)-f_{i_0}(x,\gamma^*U^\sigma)
=\sum_{j=1}^{d} h_{i_0j}(t,x)w_j\geq
h_{i_0i_0}(t,x)w_{i_0},
$$
where one has set
$$
h_{i_0j}(t,x):=\int_{0}^{1}\frac{\partial f_{i_0}}{\partial u_j}
\big(x,\gamma^*U^\sigma_1(x\cdot e-ct,x)+sw_1(t,x),\ldots,
\gamma^*U^\sigma_d(x\cdot e-ct,x)+sw_d(t,x)\big)\mathrm{d}s.
$$
Therefore, we obtain that
$$
\forall(t,x)\in\mathbb{R}\times\mathbb{R}^N,~~
\partial_t w_{i_0}(t,x)+L^{i_0}w_{i_0}(t,x)-
h_{i_0i_0}(t,x)w_{i_0}(t,x)\geq0.
$$
Setting $t_0:={(x_0\cdot e-s_0)}/{c}$,
we then infer from \eqref{contact-point} and \eqref{w-def} that $w_{i_0}(t_0,x_0)=0$.
As $w_{i_0}$ is nonnegative in $\mathbb{R}\times\mathbb{R}^N$, it follows from the strong maximum principle that
$$
\forall t\leq t_0,~\forall x\in\mathbb{R}^N,~~~w_{i_0}(t,x)=0.
$$
Furthermore, since $U^\sigma(x\cdot e-ct,x)$ and $U(x\cdot e-ct,x)$ are $\mathbb{Z}^N$-periodic in the second variable, one can easily check that the function $w$ defined by \eqref{w-def} satisfies
$$
w\left(t+\frac{k\cdot e}{c},x\right)=w(t,x-k)~~
\text{ for all }(t,x,k)\in\mathbb{R}\times\mathbb{R}^N\times\mathbb{Z}^{N}
$$
and thus $w_{i_0}(t,x)=0$ for all $(t,x)\in\mathbb{R}\times\mathbb{R}^N$.
However, since $U^\sigma$ has the same asymptotics as in \eqref{U-LR-limits} and due to $\gamma^*<1$, then there exists $\epsilon>0$ independent of $\sigma$ such that
\begin{align*}
  &\forall x\in\mathbb{R}^N,~~\liminf_{t\to+\infty}w_{i_0}(t,x)=\liminf_{t\to+\infty}
  \left(U_{i_0}-\gamma^*U^\sigma_{i_0}\right)(x\cdot e-ct,x)
  =(1-\gamma^*)\epsilon>0~\text{ if $c>0$},\\
  &\forall x\in\mathbb{R}^N,~~\liminf_{t\to-\infty}w_{i_0}(t,x)=\liminf_{t\to-\infty}\left(
  U_{i_0}-\gamma^*U^\sigma_{i_0}\right)(x\cdot e-ct,x)
  =(1-\gamma^*)\epsilon>0~\text{ if $c<0$}.
\end{align*}
This leads to a contradiction. Consequently, one gets $\gamma^*\geq1$.
From the definition of $\gamma^*$ and by \eqref{left-semi-interval-monotonicity}, one has that
$$
\forall\sigma\geq0,~~ U(s,x)\geq U(s+\sigma,x)~\text{ for all }
(s,x)\in\mathbb{R}\times\mathbb{T}^N.
$$
Coming back to the original variables $(t,x)$, this means
that
$$
\forall\sigma\geq0,~~u(t,x)\geq u\left(t-\frac{\sigma}{c},x\right)~\text{ for all }
(t,x)\in\mathbb{R}\times\mathbb{R}^N.
$$
We further claim that the inequality is componentwise strict as soon as $\sigma>0$.
To see this, for any given $\sigma>0$ and $c\neq0$, we set
$$
v(t,x):=u(t,x)-u\left(t-\frac{\sigma}{c},x\right).
$$
From the uniqueness of the corresponding Cauchy problem, applying the strong maximum principle together with assumption \ref{item:irreducibility} to the linear parabolic system satisfied by $v$,
we can deduce that $v\equiv\bm{0}$ or $v\gg\bm{0}$ in $\mathbb{R}\times\mathbb{R}^N$. If $v\equiv\bm{0}$, we obtain in particular that
$$
\forall x\in\mathbb{R}^N,~~~u(0,x)=U(x\cdot e,x)=
U(x\cdot e+\sigma,x)=u\left(-{\sigma}/{c},x\right).
$$
which is impossible because of \eqref{U-LR-limits}.
We thereby conclude that $u(t,x)$ is componentwise increasing in the $t$ variable if $c>0$ and decreasing if $c<0$.

Lastly, by \eqref{global-estimate}, the function $w(t,x):=\partial_t u(t,x)$ is globally bounded.
If $c>0$, then $w$ is nonnegative and $w\not\equiv\bm{0}$.
Since $f$ is quasimonotone, each component of $w$ satisfies
$$
\partial_tw_i+L^iw_i-\frac{\partial f_i(x,u)}{\partial u_i}w_i
=\sum_{j=1,\,j\neq i}^{d}\frac{\partial f_i(x,u)}
{\partial u_j}w_j \geq0,~~~i=1,\ldots,d,
$$
Note that the coefficients of the zero-order term in the above equation are bounded.
With the same arguments as before, one can prove that all components of $w$ are positive everywhere in $\mathbb{R}\times\mathbb{R}^N$, that is, $\partial_t u\gg\bm{0}$ if $c>0$.
The case $c<0$ can be treated similarly.
That completes the proof of Proposition \ref{monotonocity-proof}.
\end{proof}

\section{Existence of pulsating waves}\label{sect:existence}
This section is devoted to the proof of Theorem \ref{main-result-existence}.
\subsection{Construction of two sub- and supersolutions}
In order to prove the existence results, we shall use appropriate sub- and supersolutions.
Let $h=(h_1,\ldots,h_d)^T$ be the vector-valued function defined by
\begin{equation}\label{supersol-form-any-speed}
 h(t,x)=e^{-\lambda_c(e)(x\cdot e-ct)}\phi_{\lambda_c(e)}(x)
\end{equation}
where $\phi_{\lambda_c(e)}\in C^2(\mathbb{T}^N)^d$ is the unique principal eigenfunction of \eqref{nonsymmetric-periodic-PE} in the sense of \eqref{normalization-principal-eigenfunction} corresponding to $\lambda=\lambda_c(e)$, and $\lambda_c(e)$ is defined by \eqref{decay-rate}.
By Propositions \ref{periodic-principal-eigenvalue-properties}-\ref{minimal-speed} and from \eqref{nonsymmetric-periodic-PE} applied with $H\equiv D_uf(x,\bm{0})$,
it is straightforward to check that $h$ is a supersolution of \eqref{monotone-system} for all $c\geq c^*(e)$ due to assumption \eqref{sublinearity}.
In what follows, we shall construct another supersolution of \eqref{monotone-system} for $c=c^*(e)$ and two subsolutions of \eqref{monotone-system} for $c>c^*(e)$ and $c=c^*(e)$, respectively.

For notational concision, let us temporarily forget the dependence of $k(\lambda,e)$, $\phi_{\lambda_c(e)}$, $\lambda_c(e)$ and $c^*(e)$ on $e\in\mathbb{S}^{N-1}$ in the next three lemmas.
Since $f(x,\bm{0})\equiv\bm{0}$ and $f\in C^{1,\beta}(\mathbb{T}^N\times B_\infty(\bm{0},\sigma))^d$ with $\sigma>0$  by assumption \ref{item:smoothness},
then there exists $M>0$ such that
\begin{align}\label{sharp-regularity-hypothesis}
   \left|{f(x,u)-D_uf(x,\bm{0})u}\right|_\infty\leq M|u|_\infty^{1+\beta}
   ~~\text{ for all }(x,u)\in\mathbb{T}^N\times\mathbb{R}^d
   \text{ with }|u|_\infty\leq\sigma.
\end{align}
Even if it means decreasing $\sigma>0$, one can assume without loss of generality that
\begin{equation}\label{sigma}
  0<\sigma<\widehat{\eta}
\end{equation}
where $\widehat{\eta}$ is given in assumption \ref{item:upper-bound}.
Let us mention that conditions similar to \eqref{sharp-regularity-hypothesis} have early been formulated for scalar nonlinear functions in \cite{hamel2008qualitative},
but there is a slight difference:
We allow in \eqref{sharp-regularity-hypothesis} that all coordinates of the vector $u$  may not have the same sign.
What it comes into play will be clearly understood in the next two lemmas.
\begin{lemma}\label{subsol-construction-larger-speeds}
Let $\omega=(\omega_1,\ldots,\omega_d)^T$ be the vector-valued function defined by
\begin{align}\label{subsol-form-larger-speeds}
\omega(t,x)=e^{-\lambda_c(x\cdot e-ct)}\phi_{\lambda_c}(x)-
Ke^{-(\lambda_c+\delta)(x\cdot e-ct)}\phi_{\lambda_c+\delta}(x)
\end{align}
where $\phi_{\lambda_c}$ and $\phi_{\lambda_c+\delta}$ are the unique principal eigenfunctions of \eqref{nonsymmetric-periodic-PE} in the sense of \eqref{normalization-principal-eigenfunction}
corresponding to $\lambda=\lambda_{c}$ and $\lambda_{c}+\delta$ with $\delta>0$, respectively.
Then for any $c>c^{*}$, there exist $K>0$ and
$\delta\in\big(0,\min(\beta\lambda_c, \lambda^+_{c}-\lambda_{c})\big)$,
where $\lambda^+_{c}$ and $\beta$ are given in Proposition \ref{minimal-speed} and \eqref{sharp-regularity-hypothesis}, such that$\colon$
\begin{itemize}
  \item $\sup_{(t,x)\in\mathbb{R}\times\mathbb{R}^N}\omega(t,x)\ll\widehat{\eta}\bm{1}$,
  \item  $\omega(t,x)\ll\bm{0}$ for all $(t,x)\in\mathbb{R}\times\mathbb{R}^N$ satisfying $x\cdot e-ct\leq0$,
  \item there holds
  $$
 \partial_t\omega+L\omega\leq f(x,\omega)
  $$
  on the set
  \begin{equation}\label{Omega+}
   \Omega_+:=\Big\{(t,x)\in\mathbb{R}\times\mathbb{R}^{N}\,\big|\,
   \max_{1\leq i\leq d}\omega_i(t,x)>0\Big\},
  \end{equation}
\end{itemize}
where $\widehat{\eta}$ is given in \ref{item:upper-bound} and $L={\rm diag}(L^1,\ldots,L^d)$ denotes a diagonal matrix of the operators $L^i$ given by \eqref{nondivergence}.
\end{lemma}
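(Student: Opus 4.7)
The construction is motivated by viewing $v^{(1)}(t,x) := e^{-\lambda_c(x\cdot e - ct)}\phi_{\lambda_c}(x)$ as an exact solution of the linearized equation at $\bm{0}$, which holds because $c\lambda_c + k(\lambda_c,e) = 0$, whereas the correction $v^{(2)}(t,x) := e^{-(\lambda_c+\delta)(x\cdot e - ct)}\phi_{\lambda_c+\delta}(x)$ contributes a residual of definite sign. Indeed, picking $\delta \in (0, \lambda_c^+ - \lambda_c)$ places $\lambda_c+\delta$ strictly between the two roots $\lambda_c$ and $\lambda_c^+$ of $k(\lambda,e) + c\lambda = 0$, so the strict concavity of $\lambda \mapsto k(\lambda,e)$ together with $k(0,e) = \lambda_1^p < 0$ yields
\[
A := c(\lambda_c+\delta) + k(\lambda_c+\delta,e) > 0.
\]

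Using the conjugation $L^i(e^{-\lambda e\cdot x}\psi) = e^{-\lambda e\cdot x} L^i_{\lambda e}\psi$ together with \eqref{nonsymmetric-periodic-PE} applied with $H \equiv D_uf(x,\bm{0})$, a direct computation, setting $\xi := x\cdot e - ct$, gives
\[
\bigl(\partial_t + L - D_uf(x,\bm{0})\bigr)\omega = -KA\,e^{-(\lambda_c+\delta)\xi}\phi_{\lambda_c+\delta}(x),
\]
the $v^{(1)}$-contribution vanishing identically. Componentwise the linearized residual is therefore bounded above by $-KAm_0\,e^{-(\lambda_c+\delta)\xi}$ with $m_0 := \min_{i,x}\phi_{\lambda_c+\delta,i}(x) > 0$.

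To obtain the subsolution inequality on $\Omega_+$, the quadratic-order error $f(x,\omega) - D_uf(x,\bm{0})\omega$, of size $M|\omega|_\infty^{1+\beta}$ by \eqref{sharp-regularity-hypothesis}, must be absorbed into this residual. On $\Omega_+$, the inequality $v^{(1)}_i > K v^{(2)}_i$ for some $i$ forces $e^{\delta\xi} \geq K m_0$; as soon as $K m_0 > 1$ one therefore has $\xi > 0$ there, and hence $|\omega|_\infty \leq (1 + K e^{-\delta\xi})e^{-\lambda_c\xi} \leq C_1 e^{-\lambda_c\xi}$ with $C_1 := 1 + 1/m_0$. Enlarging $K$ further ensures $|\omega|_\infty \leq C_1 (Km_0)^{-\lambda_c/\delta} \leq \sigma$, so that \eqref{sharp-regularity-hypothesis} applies. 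Comparing decay rates, $MC_1^{1+\beta}e^{-(1+\beta)\lambda_c\xi}$ is dominated by $KAm_0\,e^{-(\lambda_c+\delta)\xi}$ provided $(1+\beta)\lambda_c - (\lambda_c+\delta) = \beta\lambda_c - \delta > 0$, which is exactly the second restriction $\delta < \beta\lambda_c$ in the statement. Since $\xi > 0$ on $\Omega_+$, the factor $e^{-(\beta\lambda_c-\delta)\xi}$ is at most $1$, so any choice of $K$ satisfying $KAm_0 \geq MC_1^{1+\beta}$ yields the desired componentwise subsolution inequality.

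The remaining two bullets are elementary. For $\xi \leq 0$, one has $e^{\delta\xi} \leq 1$ and $\phi_{\lambda_c,i}(x) \leq 1$, so $\omega_i = \phi_{\lambda_c,i}(x)e^{-\lambda_c\xi} - K\phi_{\lambda_c+\delta,i}(x)e^{-(\lambda_c+\delta)\xi}$ is strictly negative for every $i$ as soon as $Km_0 > 1$; the uniform bound $\sup\omega \ll \widehat{\eta}\bm{1}$ then follows from $|\omega|_\infty \leq C_1(Km_0)^{-\lambda_c/\delta}$ on $\Omega_+$ and $\omega \leq \bm{0}$ elsewhere, upon choosing $K$ still larger. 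The main obstacle is the interlocking of the four parameter constraints: one must first fix $\delta \in (0, \min(\beta\lambda_c, \lambda_c^+ - \lambda_c))$, derive $A$, $m_0$ and $C_1$ from it, and only then pick $K$ large enough to simultaneously (i) force $\xi > 0$ on $\Omega_+$, (ii) ensure $|\omega|_\infty \leq \sigma$ there, (iii) dominate the nonlinear error by the linear residual, and (iv) bring $\sup\omega$ below $\widehat{\eta}\bm{1}$.
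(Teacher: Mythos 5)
Your proposal is correct and follows essentially the same route as the paper's proof: the linear part is computed exactly using the eigenvalue relations, the residual coefficient $A$ (the paper's $r_\delta$) is positive for $\delta\in(0,\lambda_c^+-\lambda_c)$ by strict concavity, the inclusion $\Omega_+\subset\{x\cdot e-ct>0\}$ for $K$ large gives $|\omega|_\infty\le\sigma$ there so that \eqref{sharp-regularity-hypothesis} applies, and the nonlinear error is absorbed using $\delta<\beta\lambda_c$. The only differences are cosmetic (your single constant $C_1=1+1/m_0$ versus the paper's explicit $K_1,K_2$).
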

\begin{proof}
Let
\begin{equation}\label{a0-b0-def}
a_0:={\min_{1\leq i\leq d}\min_{x\in\mathbb{T}^N}(\phi_{\lambda_c})_i(x)}~\text{ and }~
b_0:={\min_{1\leq i\leq d}\min_{x\in\mathbb{T}^N}(\phi_{\lambda_c+\delta})_i(x)}.
\end{equation}
Since $\bm{0}\ll\phi_\lambda\leq\bm{1}$ by \eqref{normalization-principal-eigenfunction}, one has
\begin{align*}
\max_{1\leq i\leq d}\omega_i(t,x)&\leq
e^{-\lambda_c(x\cdot e-ct)}-b_0Ke^{-(\lambda_c+\delta)(x\cdot e-ct)}\\
&\leq\left(\frac{1}{b_0K}\right)^{\frac{\lambda_c}{\delta}}\times
\left[\left(\frac{\lambda_c}{\lambda_c+\delta}\right)^{\frac{\lambda_c}{\delta}}-
\left(\frac{\lambda_c}{\lambda_c+\delta}\right)^{\frac{\lambda_c+\delta}{\delta}}\right]
\end{align*}
where the second inequality holds since the resulting upper bound is the global maximum of the middle term and it is reached on the line $x\cdot e-ct=s_0$ with
$s_0:=\delta^{-1}\ln[b_0K(\lambda_c+\delta)/{\lambda_c}]$.
By \eqref{sigma}, we then obtain that
$$
\sup_{(t,x)\in\mathbb{R}\times\mathbb{R}^N}{\omega(t,x)}\leq
\sigma\bm{1}\ll\widehat{\eta}\bm{1}~~
\text{ if $K>K_1$}
$$
where
\begin{equation}\label{K1-def}
K_1:=\frac{1}{b_0}\left(\frac{1}{\sigma}\right)^{\frac{\delta}{\lambda_c}}\times
\left[\left(\frac{\lambda_c}{\lambda_c+\delta}\right)^{\frac{\lambda_c}{\delta}}-
\left(\frac{\lambda_c}{\lambda_c+\delta}\right)^{\frac{\lambda_c+\delta}{\delta}}\right]
^{\frac{\delta}{\lambda_c}}>0.
\end{equation}
Furthermore, in order to match the condition \eqref{sharp-regularity-hypothesis},
we need to search for $K$ such that
\begin{equation}\label{max-norm-control}
\forall(t,x)\in\Omega_+,~~~
|\omega(t,x)|_\infty=\max_{1\leq i\leq d}|\omega_i(t,x)|\leq\sigma,
\end{equation}
where $\Omega_+$ is defined by \eqref{Omega+} and $\sigma$ is given in \eqref{sharp-regularity-hypothesis}.
It suffices to choose some $K>K_1$ such that
$$
\forall(t,x)\in\Omega_+,~~~a_0e^{-\lambda_c(x\cdot e-ct)}
-Ke^{-(\lambda_c+\delta)(x\cdot e-ct)}\geq-\sigma,
$$
where $a_0$ has been defined in \eqref{a0-b0-def}.
Observe that
$\Omega_+\subset\left\{(t,x)\mid
x\cdot e-ct\geq\delta^{-1}\ln(b_0K)\right\}$. Then
\begin{align*}
 a_0e^{-\lambda_c(x\cdot e-ct)}-Ke^{-(\lambda_c+\delta)(x\cdot e-ct)}
 \geq a_0(b_0K)^{-\frac{\lambda_c}{\delta}}-
 K(b_0K)^{-\frac{\lambda_c+\delta}{\delta}}
 ~~\text{ in }\Omega_+,
\end{align*}
where $b_0$ has been defined in \eqref{a0-b0-def}.
Hence \eqref{max-norm-control} holds as long as $K\geq\max(K_1,K_2)$ where $K_1$ is given by \eqref{K1-def} and one has set
\begin{equation}\label{K2-def}
K_2:=\left(\frac{1-a_0b_0}{\sigma b_0}\right)^{\frac{\delta}{\lambda_c}}>0.
\end{equation}

Next, one can easily check that $\omega(t,x)\ll\bm{0}$ for any $\delta>0$ and for all
$(t,x)\in\mathbb{R}\times\mathbb{R}^N$ satisfying $x\cdot e-ct\leq0$ as long as $K>1/b_0$ where $b_0$ is defined in \eqref{a0-b0-def}.
On the other hand, direct computations show
$$
\partial_t\omega+L\omega
 =D_uf(x,\bm{0})\omega-\left(c\lambda_{c}+c\delta+k(\lambda_{c}+\delta)\right)
 K\phi_{\lambda_{c}+\delta}(x)e^{-(\lambda_{c}+\delta)(x\cdot e-ct)}.
$$
From Propositions \ref{periodic-principal-eigenvalue-properties} and \ref{minimal-speed},
if $c>c^*$, one has
$$
 r_\delta:=c\lambda_c+c\delta+k(\lambda_c+\delta)>0~~
 \text{ for all }\delta\in(0,\lambda_c^+-\lambda_c).
$$
To conclude one has to check that
\begin{equation}\label{ineq-sub}
\forall(t,x)\in\Omega_+,~~D_uf(x,\bm{0})\omega(t,x)-f(x,\omega(t,x))\leq
r_\delta K\phi_{\lambda_{c}+\delta}(x) e^{-(\lambda_{c}+\delta)(x\cdot e-ct)}.
\end{equation}

Fix $\delta\in\left(0,\min\{\beta\lambda_c,\lambda^+_{c}-\lambda_{c}\}\right)$ and
$K\geq\max(K_1,K_2,b^{-1}_0,Mb^{-1}_0/r_\delta)$.
Due to \eqref{max-norm-control},
one can then infer from \eqref{sharp-regularity-hypothesis} and the expression \eqref{subsol-form-larger-speeds} of $\omega$ that for all $(t,x)\in\Omega_+$,
\begin{align*}
|f(x,\omega(t,x))-D_uf(x,\bm{0})\omega(t,x)|_\infty\leq
M|\omega(t,x)|_\infty^{1+\beta}
\leq M\Big(\max_{1\leq i\leq d}(\phi_{\lambda_c})_i(x)\Big)^{1+\beta}
e^{-\lambda_c(1+\beta)(x\cdot e-ct)}.
\end{align*}
As a consequence, \eqref{ineq-sub} holds if
$$
M\left(\max_{1\leq i\leq d}\left(\phi_{\lambda_c}\right)_i(x)\right)^{1+\beta}
e^{-\lambda_c(1+\beta)(x\cdot e-ct)}\leq
r_\delta K\min_{1\leq i\leq d}(\phi_{\lambda_{c}+\delta})_i(x)
e^{-(\lambda_{c}+\delta)(x\cdot e-ct)}.
$$
With the above choice for the parameters $K$ and $\delta$, we further obtain that all points $(t,x)\in\mathbb{R}\times\mathbb{R}^N$ contained in $\Omega_+$ satisfy $x\cdot e-ct>0$ and that for all $(t,x)\in\Omega_+$,
\begin{align*}
&~M\left(\max_{1\leq i\leq d}\left(\phi_{\lambda_c}\right)_i(x)\right)^{1+\beta}
e^{-(1+\beta)\lambda_c(x\cdot e-ct)}-
r_\delta K\min_{1\leq i\leq d}(\phi_{\lambda_{c}+\delta})_i(x)
e^{-(\lambda_{c}+\delta)(x\cdot e-ct)}\\
&\leq\left[M\left(\max_{1\leq i\leq d}\left(\phi_{\lambda_c}\right)_i(x)\right)^{1+\beta}-
r_\delta K\min_{1\leq i\leq d}(\phi_{\lambda_{c}+\delta})_i(x)\right]
e^{-(\lambda_{c}+\delta)(x\cdot e-ct)}\\
&\leq(M-r_\delta Kb_0)e^{-(\lambda_{c}+\delta)(x\cdot e-ct)}\\
&\leq0.
\end{align*}
This ends the proof of Lemma \ref{subsol-construction-larger-speeds}.
\end{proof}

Inspired by \cite[Proposition 4.5]{hamel2008qualitative},
we now construct a delicate subsolution of \eqref{monotone-system} for $c=c^*$.
Recall first that
from Proposition \ref{minimal-speed} and the formula \eqref{formula-minimal-speed} of $c^*$, we know that $\lambda^*$ is a unique minimizer of the function
$(0,\infty)\ni\lambda\mapsto-k(\lambda)/\lambda$.
Therefore, by analyticity and strict concavity of $\lambda\mapsto k(\lambda)$, we have
\begin{equation}\label{multiplicity}
   k(\lambda^*)+c^*\lambda^*=0,~k^{\prime}(\lambda^*)+c^*=0
   ~\text{ and }~k^{\prime\prime}(\lambda^*)\leq0.
\end{equation}
Furthermore, it also holds that
\begin{equation}\label{r*-delta-def}
r^*_\delta:=(\lambda^*+\delta)c^*+k(\lambda^*+\delta)<0
~\text{ for all }\delta>0.
\end{equation}

To proceed we define
\begin{equation}\label{v-star}
 v^*(t,x;\lambda):=e^{-\lambda\left(x\cdot e-c^*t\right)}\phi_{\lambda}(x)~~
\text{ for $(t,x)\in\mathbb{R}\times\mathbb{R}^{N}$ and $\lambda\in\mathbb{R}$}.
\end{equation}
Since the normalized principal eigenfunction
 $\phi_{\lambda}$ of \eqref{nonsymmetric-periodic-PE} in the sense of
 \eqref{normalization-principal-eigenfunction} is also analytic with respect to
$\lambda\in\mathbb{R}$, we have that
\begin{equation}\label{v-derivative-lambda}
\frac{\partial v^*(t,x;\lambda)}{\partial\lambda}
=-\left[(x\cdot e-c^*t)\phi_{\lambda}(x)-\frac{\partial\phi_{\lambda}(x)}
{\partial\lambda}\right]e^{-\lambda(x\cdot e-c^*t)},
\end{equation}
and that $\frac{\partial\phi_{\lambda}}
{\partial\lambda}\in C^2(\mathbb{T}^N)^d$ satisfies
$$
L_{\lambda}\frac{\partial\phi_{\lambda}}{\partial\lambda}
-D_uf(x,\bm{0})\frac{\partial\phi_{\lambda}}{\partial\lambda}
-k(\lambda)\frac{\partial\phi_{\lambda}}{\partial\lambda}
=L^\prime_{\lambda}\phi_{\lambda}-k^\prime(\lambda)\phi_{\lambda}
~~\text{ in }\mathbb{T}^N,
$$
where $L^\prime_{\lambda}$ denotes the derivative of the operator $L_{\lambda}$ given by \eqref{modified-operator} with respect to $\lambda\in\mathbb{R}$.
\begin{lemma}\label{subsol-construction-critical}
   Let $\omega^*=(\omega^*_1,\ldots,\omega^*_d)^T$ be the vector-valued function defined by
  \begin{equation}\label{subsol-form-critical}
  \omega^*(t,x)=
  \begin{dcases}
    \kappa_1v^*(t,x;\lambda^*+\delta)-\kappa_2\left(\frac{\partial v^*(t,x;\lambda)}
    {\partial\lambda}\bigg|_{\lambda=\lambda^*}+Kv^*(t,x;\lambda^*)\right)
           & \text{if }x\cdot e-c^*t\geq 0, \\
    \bm{0} & \text{if }x\cdot e-c^*t<0,
  \end{dcases}
\end{equation}
where $v^*$ is defined by \eqref{v-star}. Then for any fixed $\delta\in(0,\beta\lambda^*)$ where $\beta$ is given in \eqref{sharp-regularity-hypothesis}, there exist $K>0$, $s_0>0$ and $0<\kappa_1\leq\kappa_2$ (depending on $s_0$) such that$\colon$
\begin{itemize}
  \item $\omega^*(t,x)<\bm{0}$ for all $(t,x)\in\mathbb{R}\times\mathbb{R}^N$ with
  $x\cdot e-c^*t=0$,
  \item $\omega^*(t,x)\gg\bm{0}$ for all $(t,x)\in\mathbb{R}\times\mathbb{R}^N$ with $x\cdot e-c^*t\geq s_0$,
  \item $\sup_{(t,x)\in\mathbb{R}\times\mathbb{R}^N}\omega(t,x)\ll\widehat{\eta}\bm{1}$,
  \item there holds
    $$
       \partial_t\omega^*+L\omega^*\leq f(x,\omega^*)
    $$
  on the set
    \begin{equation}\label{Omega+*}
     \Omega^*_+:=\Big\{(t,x)\in\mathbb{R}\times\mathbb{R}^N\,\big|\,
     \max_{1\leq i\leq d}\omega_i^*(t,x)>0\Big\},
    \end{equation}
\end{itemize}
where $\widehat{\eta}$ is given in \ref{item:upper-bound} and $L={\rm diag}(L^1,\ldots,L^d)$ denotes a diagonal matrix of the operators $L^i$ given by \eqref{nondivergence}.
\end{lemma}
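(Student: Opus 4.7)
The proof rests on two algebraic identities for the family $v^*(\cdot;\lambda)$. The defining eigenvalue relation \eqref{nonsymmetric-periodic-PE} together with the relations $L(e^{-\lambda e\cdot x}\phi_\lambda) = e^{-\lambda e\cdot x}L_{\lambda e}\phi_\lambda$ yields, for every $\lambda \in \mathbb{R}$,
\begin{equation*}
(\partial_t + L - D_uf(x,\bm{0}))\,v^*(t,x;\lambda) = (c^*\lambda + k(\lambda))\,v^*(t,x;\lambda).
\end{equation*}
Setting $\lambda = \lambda^*$ and using $c^*\lambda^* + k(\lambda^*) = 0$ from \eqref{multiplicity} shows that $v^*(\cdot;\lambda^*)$ lies in the kernel of $\partial_t + L - D_uf(x,\bm{0})$, so the $Kv^*(\cdot;\lambda^*)$ piece of \eqref{subsol-form-critical} is annihilated. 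Differentiating the same identity in $\lambda$, evaluating at $\lambda^*$, and using both $c^*\lambda^* + k(\lambda^*) = 0$ and $c^* + k'(\lambda^*) = 0$ from \eqref{multiplicity}, one finds that $\partial_\lambda v^*|_{\lambda=\lambda^*}$ is annihilated as well. Only the piece $\kappa_1 v^*(\cdot;\lambda^*+\delta)$ contributes nontrivially, producing on the half-space $\{s>0\}$ with $s := x\cdot e - c^*t$,
\begin{equation*}
(\partial_t + L - D_uf(x,\bm{0}))\,\omega^* = \kappa_1 r^*_\delta\, v^*(\cdot;\lambda^*+\delta),
\end{equation*}
which is componentwise strictly negative since $r^*_\delta < 0$ by \eqref{r*-delta-def}.

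To upgrade this linear strict inequality to the required nonlinear subsolution inequality in $\Omega_+^*$, the plan is to invoke \eqref{sharp-regularity-hypothesis} to obtain the componentwise bound $f(x,\omega^*) - D_uf(x,\bm{0})\omega^* \geq -M|\omega^*|_\infty^{1+\beta}\bm{1}$, which is legitimate provided $|\omega^*|_\infty \leq \sigma$ in $\Omega_+^*$. It then suffices to check, for each component,
\begin{equation*}
\kappa_1 |r^*_\delta|\, b^*_0\, e^{-(\lambda^*+\delta)s} \;\geq\; M|\omega^*|_\infty^{1+\beta},
\end{equation*}
where $b^*_0 := \min_{i,x}(\phi_{\lambda^*+\delta})_i(x) > 0$. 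Combined with the pointwise bound $|\omega^*|_\infty \leq \kappa_1 e^{-(\lambda^*+\delta)s} + \kappa_2(s+C_K)e^{-\lambda^* s}$, where $C_K$ collects $K$ and $\|\partial_\lambda\phi_\lambda|_{\lambda^*}\|_\infty$, the inequality reduces, after dividing by $e^{-(\lambda^*+\delta)s}$, to control by a right-hand side that remains bounded on $s \geq 0$; the key cancellation is that $(s+C_K)^{1+\beta}e^{-(\beta\lambda^* - \delta)s}$ is bounded on $[0,\infty)$ precisely because $\delta < \beta\lambda^*$. This forces a scaling of the form $\kappa_1 \geq C_5\, \kappa_2^{1+\beta}$ for some structural constant $C_5 > 0$, compatible with the requirement $\kappa_1 \leq \kappa_2$ provided $\kappa_2$ is chosen small enough.

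The remaining bullets are verified by direct inspection of \eqref{subsol-form-critical}. For the sign at $s=0$, the constant $K$ is taken large enough that $\partial_\lambda\phi_\lambda|_{\lambda^*} + K\phi_{\lambda^*} \gg \bm{0}$; under the scaling $\kappa_1 \sim \kappa_2^{1+\beta} \ll \kappa_2$ for small $\kappa_2$, the subtracted $\kappa_2$-term dominates and $\omega^*(s=0) \ll \bm{0}$, so in particular $\omega^*(s=0) < \bm{0}$. For the positivity at $s \geq s_0$, rewrite
\begin{equation*}
\omega^* = \kappa_1 e^{-(\lambda^*+\delta)s}\phi_{\lambda^*+\delta} + \kappa_2 e^{-\lambda^* s}\bigl[(s-K)\phi_{\lambda^*} - \partial_\lambda\phi_\lambda|_{\lambda^*}\bigr];
\end{equation*}
for $s \geq s_0$ with $s_0$ large depending only on $K$ and $\|\partial_\lambda\phi_\lambda|_{\lambda^*}\|_\infty/\min_{i,x}(\phi_{\lambda^*})_i(x)$, the bracket is componentwise strictly positive, giving $\omega^* \gg \bm{0}$. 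The supremum bound $\sup \omega^* \ll \widehat{\eta}\bm{1}$ follows from the global boundedness of the explicit formula on $[0,\infty)$ together with \eqref{sigma}, by further shrinking $\kappa_2$ if required. The main obstacle throughout is the simultaneous satisfiability of three competing constraints on $(\kappa_1,\kappa_2,K)$ — namely $\kappa_1 \leq \kappa_2$, the lower bound $\kappa_1 \gtrsim \kappa_2^{1+\beta}$ forced by the nonlinear error, and $\kappa_2 K \gtrsim \kappa_1$ forced by the sign condition at $s=0$ — which is made possible precisely by the sharp-regularity exponent $\beta > 0$ and the double-root structure of $\lambda \mapsto c^*\lambda + k(\lambda)$ at $\lambda^*$ encoded in \eqref{multiplicity}.
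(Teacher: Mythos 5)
Your proposal is correct and follows essentially the same route as the paper: the same cancellation of the $v^*(\cdot;\lambda^*)$ and $\partial_\lambda v^*|_{\lambda^*}$ pieces via \eqref{multiplicity}, the same use of $r^*_\delta<0$ and \eqref{sharp-regularity-hypothesis}, and the same compatibility window $C\kappa_2^{1+\beta}\leq\kappa_1\leq\kappa_2$ with $K$ fixed first to control the sign at $x\cdot e-c^*t=0$. The only (immaterial) difference is that you absorb the nonlinear error uniformly on $s\geq 0$ via the boundedness of $(s+C_K)^{1+\beta}e^{-(\beta\lambda^*-\delta)s}$, whereas the paper splits into the two regions $0\leq s<s_0$ and $s\geq s_0$ before combining the resulting constants.
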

\begin{proof}
Since $\phi_{\lambda^*}$ and $\phi_{\lambda^*+\delta}$ are positive, one may fix some $K>0$ such that
\begin{equation}\label{K-choice}
\forall x\in\mathbb{T}^N,~~
\max_{1\leq i\leq d}\max_{x\in\mathbb{T}^N}(\phi_{\lambda^*+\delta})_i(x)
-\left(\frac{\partial\phi_{\lambda}(x)}{\partial\lambda}
\bigg|_{\lambda=\lambda^*}\right)_i
\leq
K\min_{1\leq i\leq d}\min_{x\in\mathbb{T}^N}
\left(\phi_{\lambda^*}\right)_i(x).
\end{equation}
From \eqref{v-derivative-lambda} applied at $\lambda=\lambda^*$,
such a choice for the parameter $K$ in \eqref{K-choice} then ensures that the function $\omega^*$ given by \eqref{subsol-construction-critical} is nonpositive and nonzero whenever $x\cdot e-c^*t=0$ as soon as $\kappa_2\geq\kappa_1>0$.
To proceed let us set, for notational concision,
\begin{equation}\label{cap-w-def}
W(t,x):=-\frac{\partial v^*(t,x;\lambda)}{\partial\lambda}
\bigg|_{\lambda=\lambda^*}-Kv^*(t,x;\lambda^*)
\end{equation}
so that
\begin{equation}\label{omega*-expression}
  \omega^*(t,x)=\kappa_1v^*(t,x;\lambda^*+\delta)+\kappa_2W(t,x)
  ~~\text{ if $x\cdot e-c^*t\geq0$}.
\end{equation}
Notice also that for each $\lambda>0$,
\begin{equation}\label{leading-term}
\frac{\partial v^*(t,x;\lambda)}{\partial\lambda}
=-(x\cdot e-c^*t)v^*(t,x;\lambda)
+o\big((x\cdot e-c^*t)v^*(t,x;\lambda)\big)~
\text{ as $x\cdot e-c^*t\to+\infty$}.
\end{equation}
Applying this estimate at $\lambda=\lambda^*$, we find that the leading term of $W$ is still given by
$$
(x\cdot e-c^*t)e^{-\lambda^*(x\cdot e-c^*t)}\phi_{\lambda^*}(x)
~~\text{  as $x\cdot e-c^*t\to+\infty$}.
$$
Thus, as $\delta>0$ and $\bm{0}\ll\phi_\lambda\leq\bm{1}$ (by \eqref{normalization-principal-eigenfunction}),
then there exists $s_0>0$ large enough such that
\begin{equation}\label{w-estimate-leading-edge}
\begin{split}
v^*(t,x;\lambda^*+\delta)&\leq e^{-(\lambda^*+\delta)(x\cdot e-c^*t)}\bm{1}
\leq W(t,x)\leq2(x\cdot e-c^*t)e^{-\lambda^*(x\cdot e-c^*t)}\bm{1},\\
&\text{for all $(t,x)\in\mathbb{R}\times\mathbb{R}^N$ satisfying
$x\cdot e-c^*t\geq s_0$},
\end{split}
\end{equation}
whence it holds that
$$
\forall \kappa_1>0, \kappa_2>0,~~~
\omega^*(t,x)=\kappa_1v^*(t,x;\lambda^*+\delta)+\kappa_2W(t,x)\gg\bm{0}
$$
for all $(t,x)\in\mathbb{R}\times\mathbb{R}^N$ satisfying $x\cdot e-c^*t\geq s_0$.

On the other hand, direct computations show that the function $v^*$ given by \eqref{v-star} satisfies
\begin{equation}\label{v-star-equation}
  \partial_t v^*(t,x;\lambda)+Lv^*(t,x;\lambda)-D_uf(x,\bm{0})v^*(t,x;\lambda)=
  \left(c^*\lambda+k(\lambda)\right)v^*(t,x;\lambda).
\end{equation}
Differentiating both sides of \eqref{v-star-equation} with respect to $\lambda$ yields
  \begin{align*}
    &\partial_t\left(\frac{\partial v^*(t,x;\lambda)}
    {\partial\lambda}\right)+L\left(\frac{\partial v^*(t,x;\lambda)}
    {\partial\lambda}\right)-D_uf(x,\bm{0})
    \frac{\partial v^*(t,x;\lambda)}{\partial\lambda}\\
    &=\left(c^*+k^\prime(\lambda)\right)v^*(t,x;\lambda)+
    (k(\lambda)+c^*\lambda)\frac{\partial v^*(t,x;\lambda)}
    {\partial\lambda}.
  \end{align*}
It then follows from \eqref{multiplicity} that
  \begin{equation}\label{star-v-star-equation}
    \partial_t\left(\frac{\partial v^*(t,x;\lambda)}
    {\partial\lambda}\bigg|_{\lambda=\lambda^*}\right)+
    L\left(\frac{\partial v^*(t,x;\lambda)}{\partial\lambda}\bigg|_{\lambda=\lambda^*}\right)
    =D_uf(x,\bm{0})\frac{\partial v^*(t,x;\lambda)}
    {\partial\lambda}\bigg|_{\lambda=\lambda^*}.
  \end{equation}
Note also that the function $v^*$ given by \eqref{v-star} with $\lambda=\lambda^*+\delta$ satisfies
\begin{align*}
  \partial_tv^*(t,x;\lambda^*+\delta)+Lv^*(t,x;\lambda^*+\delta)-
  D_uf(x,\bm{0})v^*(t,x;\lambda^*+\delta)=r_\delta^*v^*(t,x;\lambda^*+\delta)
\end{align*}
where $r_\delta^*$ has been defined in \eqref{r*-delta-def}.
As a result, the function $\omega^*$ given by \eqref{subsol-form-critical} when $x\cdot e-c^*t\geq 0$ then satisfies
\begin{align*}
\partial_t\omega^*(t,x)+L\omega^*(t,x)=D_uf(x,\bm{0})\omega^*(t,x)
+\kappa_1r_\delta^*v^*(t,x;\lambda^*+\delta).
\end{align*}

Our next aim is to prove that
\begin{equation}\label{ineq-proof}
\forall(t,x)\in\Omega_+^*,~~~
 D_uf(x,\bm{0})\omega^*(t,x)+\kappa_1r_\delta^*v^*(t,x;\lambda^*+\delta)\leq f(x,\omega^*(t,x)),
\end{equation}
where $\Omega_+^*$ is defined by \eqref{Omega+*}.
Let
\begin{equation}\label{Omega-prime-def}
\Omega^\prime:=\{(t,x)\in\mathbb{R}\times\mathbb{R}^N\mid x\cdot e-c^*t\geq0\}
\end{equation}
and $\overline{W}:=\|W\|_{L^\infty(\Omega^\prime)}$ where $W$ is given by \eqref{cap-w-def}.
From \eqref{omega*-expression}, we then find that
\begin{equation}\label{omega*-upper-bound}
\forall(t,x)\in\Omega^\prime,~~~
\omega^*(t,x)\leq\left(\kappa_1+\kappa_2\overline{W}\right)\bm{1}.
\end{equation}
To match the condition \eqref{sharp-regularity-hypothesis}, we take now any
$$
0<\kappa_1\leq\kappa_2\leq\frac{\sigma}{1+\overline{W}}
$$
so that $|\omega^*(t,x)|_\infty\leq\sigma$ for all $(t,x)\in\Omega^\prime$
where $\sigma$ is given in \eqref{sharp-regularity-hypothesis}.
In particular, due to \eqref{sigma}, it is straightforward to see from the expression \eqref{subsol-form-critical} of $\omega^*$ that
$$
\sup_{(t,x)\in\mathbb{R}\times\mathbb{R}^N}\omega^*(t,x)\ll\widehat{\eta}\bm{1}.
$$
By \eqref{sharp-regularity-hypothesis}, one also has
\begin{equation*}
  \left|{f(x,\omega^*(t,x))-D_uf(x,\bm{0})\omega^*(t,x)}\right|_\infty
  \leq M|\omega^*(t,x)|_\infty^{1+\beta}~~\text{ for all }(t,x)\in\Omega^\prime.
\end{equation*}
On the other hand, due to \eqref{K-choice}, all points $(t,x)\in\mathbb{R}\times\mathbb{R}^N$ contained in $\Omega_+^*$ necessarily satisfy $x\cdot e-c^*t>0$ and thus $\Omega_+^*\subset\Omega^\prime$,
where $\Omega_+^*$ and $\Omega^\prime$ are defined by \eqref{Omega+*} and \eqref{Omega-prime-def}, respectively.
Note furthermore that the constant $r_\delta^*$, given in \eqref{r*-delta-def}, is negative.
Consequently, to prove \eqref{ineq-proof} it suffices to show that
$$
\forall(t,x)\in\Omega^\prime,~~~\kappa_1r_\delta^*v^*(t,x;\lambda^*+\delta)+
M\bm{1}|\omega^*(t,x)|_\infty^{1+\beta}\leq\bm{0}.
$$

To that aim, we distinguish between two cases. If $x\cdot e-c^*t\geq s_0$, it follows from $\bm{0}\ll\phi_{\lambda^*+\delta}\leq\bm{1}$, \eqref{r*-delta-def}, \eqref{omega*-expression}-\eqref{w-estimate-leading-edge} and $\kappa_2\geq\kappa_1>0$ that
\begin{align*}
  &\kappa_1r_\delta^*v^*(t,x;\lambda^*+\delta)
  +M\bm{1}|\omega^*(t,x)|_\infty^{1+\beta}\\
&\leq\kappa_1r^*_\delta e^{-(\lambda^*+\delta)(x\cdot e-c^*t)}\bm{1}+
M\bm{1}\left[2\kappa_2\times 2(x\cdot e-c^*t)
e^{-\lambda^*(x\cdot e-c^*t)}\right]^{1+\beta}\\
&=\left[\kappa_1r^*_\delta e^{-\delta(x\cdot e-c^*t)}
+M\kappa_2^{1+\beta}4^{1+\beta}(x\cdot e-c^*t)^{(1+\beta)}
e^{-\beta\lambda^*(x\cdot e-c^*t)}\right]\bm{1}e^{-\lambda^*(x\cdot e-c^*t)},
\end{align*}
where $s_0$ is given in \eqref{w-estimate-leading-edge}.
Since $0<\delta<\beta\lambda^*$ and $r^{*}_\delta<0$ by \eqref{r*-delta-def}, then there exists a constant $C_1=C_1(s_0)>0$ such that
\begin{equation*}
\begin{split}
0<&(-{r^{*}_\delta}^{-1})4^{1+\beta}M(x\cdot e-c^*t)^{(1+\beta)}
e^{\delta(x\cdot e-c^*t)}\leq C_1e^{\beta\lambda^*(x\cdot e-c^*t)}\\
&\text{for all $(t,x)\in\mathbb{R}\times\mathbb{R}^N$ satisfying $x\cdot e-c^*t\geq s_0$},
\end{split}
\end{equation*}
whence there holds
\begin{align*}
  \kappa_1r^{*}_\delta v^*(t,x;\lambda^*+\delta)
  +M\bm{1}|\omega^*(t,x)|_\infty^{1+\beta}
\leq r^{*}_\delta\big(\kappa_1-C_1\kappa_2^{1+\beta}\big)
\bm{1}e^{-(\lambda^*+\delta)(x\cdot e-c^*t)}.
\end{align*}
If $0\leq x\cdot e-c^*t<s_0$, it follows from $\bm{0}\ll\phi_{\lambda^*+\delta}\leq\bm{1}$, $\kappa_2\geq\kappa_1>0$, \eqref{r*-delta-def} and  \eqref{omega*-upper-bound} that
\begin{align*}
  &\kappa_1r^{*}_\delta v^*(t,x;\lambda^*+\delta)
  +M\bm{1}|\omega^*(t,x)|_\infty^{1+\beta}\\
&\leq\kappa_1r^*_\delta e^{-(\lambda^*+\delta)s_0}\bm{1}+
M\bm{1}\left[\kappa_2\big(1+\overline{W}\big)\right]^{1+\beta}\\
&\leq r^*_\delta\big(\kappa_1-C_2\kappa_2^{1+\beta}\big)
e^{-(\lambda^*+\delta)s_0}\bm{1},
\end{align*}
where one has set
$$
C_2:=(-{r^{*}_\delta}^{-1})M(1+\overline{W})^{1+\beta}
e^{(\lambda^*+\delta)s_0}>0.
$$
In both cases, \eqref{ineq-proof} then follows.

Finally, to conclude we set $C(s_0):=\max(C_1,C_2)$ and fix $\kappa_1$, $\kappa_2$ such that
$$
0<C\kappa_2^{1+\beta}\leq\kappa_1\leq\kappa_2\leq\frac{\sigma}{1+\overline{W}}.
$$
Under such a choice and with the parameter $K$ satisfying \eqref{K-choice}, the function $\omega^*$ given by \eqref{subsol-form-critical} then satisfies all the properties stated in Lemma \ref{subsol-construction-critical} and thereby the proof is done.
\end{proof}
\begin{lemma}\label{supersol-construction-critical}
   With the notations of Lemma \ref{subsol-construction-critical}, let $h^*=(h^*_1,\ldots,h^*_d)^T$ be the vector-valued function given by
  \begin{equation}\label{supersol-form-critical}
  h^*(t,x)=
  \begin{dcases}
  Mv^*(t,x;\lambda^*)
    -\kappa_2\frac{\partial v^*(t,x;\lambda)}{\partial\lambda }
    \bigg|_{\lambda=\lambda^*}
  &\text{if }x\cdot e-c^*t>0, \\
    \widehat{\eta}\bm{1} & \text{if }
    x\cdot e-c^*t\leq0,
  \end{dcases}
\end{equation}
Then there exists a constant $M>0$ large enough such that
$$\forall(t,x)\in\mathbb{R}\times\mathbb{R}^N,~~~
h^*(t,x)\gg\omega^*(t,x),~~h^*(t,x)\gg\bm{0},
$$
and $h^*\to\bm{0}$ uniformly as $x\cdot e-c^*t\to+\infty$.
Moreover, $h^*$ is a classical supersolution of \eqref{monotone-system} whenever $x\cdot e-c^*t>0$.
\end{lemma}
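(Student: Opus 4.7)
The heart of the argument is that $h^{*}$ is built from two solutions of the linearized system at $\bm{0}$, so the supersolution property will reduce to the sublinearity assumption \eqref{sublinearity}. Indeed, \eqref{v-star-equation} together with $c^{*}\lambda^{*}+k(\lambda^{*})=0$ gives
\begin{equation*}
\partial_{t}v^{*}(\cdot;\lambda^{*})+Lv^{*}(\cdot;\lambda^{*})-D_{u}f(x,\bm{0})v^{*}(\cdot;\lambda^{*})=\bm{0},
\end{equation*}
while \eqref{star-v-star-equation} yields the same identity for $\frac{\partial v^{*}}{\partial\lambda}|_{\lambda=\lambda^{*}}$. Thus I would first observe that in the open set $\{x\cdot e-c^{*}t>0\}$ the function $h^{*}$ satisfies exactly $\partial_{t}h^{*}+Lh^{*}=D_{u}f(x,\bm{0})h^{*}$. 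Combined with the sublinearity \eqref{sublinearity}, this immediately gives the supersolution inequality $\partial_{t}h^{*}+Lh^{*}\geq f(x,h^{*})$, \emph{provided that} $h^{*}\geq\bm{0}$ on that region; so the whole problem reduces to choosing $M$ so large that the positivity and ordering claims hold.

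I would then verify the three remaining properties one at a time. Using the explicit formula \eqref{v-derivative-lambda}, rewrite
\begin{equation*}
h^{*}(t,x)=e^{-\lambda^{*}(x\cdot e-c^{*}t)}\Big\{\bigl(M+\kappa_{2}(x\cdot e-c^{*}t)\bigr)\phi_{\lambda^{*}}(x)-\kappa_{2}\tfrac{\partial\phi_{\lambda}}{\partial\lambda}\big|_{\lambda=\lambda^{*}}(x)\Big\}\quad\text{on }\{x\cdot e-c^{*}t>0\}.
\end{equation*}
Since $\phi_{\lambda^{*}}\gg\bm{0}$ on the compact torus and $\frac{\partial\phi_{\lambda}}{\partial\lambda}|_{\lambda=\lambda^{*}}$ is bounded, choosing $M$ larger than $\kappa_{2}\,\|(\partial_{\lambda}\phi_{\lambda}|_{\lambda=\lambda^{*}})/\phi_{\lambda^{*}}\|_{\infty}$ ensures $h^{*}\gg\bm{0}$ throughout the moving half-space. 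For the limit as $x\cdot e-c^{*}t\to+\infty$, the polynomial factor $M+\kappa_{2}(x\cdot e-c^{*}t)$ is dominated by the exponential $e^{-\lambda^{*}(x\cdot e-c^{*}t)}$ with $\lambda^{*}>0$, which gives the uniform convergence $h^{*}\to\bm{0}$. For the ordering $h^{*}\gg\omega^{*}$, subtract the expressions \eqref{supersol-form-critical} and \eqref{subsol-form-critical}: the $\kappa_{2}\partial_{\lambda}v^{*}$ terms cancel, leaving
\begin{equation*}
h^{*}-\omega^{*}=(M+\kappa_{2}K)v^{*}(t,x;\lambda^{*})-\kappa_{1}v^{*}(t,x;\lambda^{*}+\delta)=e^{-\lambda^{*}(x\cdot e-c^{*}t)}\bigl\{(M+\kappa_{2}K)\phi_{\lambda^{*}}(x)-\kappa_{1}e^{-\delta(x\cdot e-c^{*}t)}\phi_{\lambda^{*}+\delta}(x)\bigr\}
\end{equation*}
on $\{x\cdot e-c^{*}t>0\}$, and since $\delta>0$ and $\phi_{\lambda^{*}}$ is bounded below componentwise by a positive constant while $\phi_{\lambda^{*}+\delta}\leq\bm{1}$, increasing $M$ further makes the bracket componentwise positive. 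In the complementary region $\{x\cdot e-c^{*}t\leq 0\}$ the bounds $h^{*}=\widehat{\eta}\bm{1}\gg\bm{0}$ and $\omega^{*}\leq\bm{0}$ (by the first bullet of Lemma \ref{subsol-construction-critical}) settle the ordering immediately.

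The only point that requires genuine care is making all three conditions hold \emph{simultaneously} with a single $M$: the threshold from $h^{*}\gg\omega^{*}$ involves $(\kappa_{1},\kappa_{2},K)$ already fixed in Lemma \ref{subsol-construction-critical}, while the positivity threshold involves $\kappa_{2}$ and $\|\partial_{\lambda}\phi_{\lambda}|_{\lambda^{*}}\|_{\infty}$. Both are finite constants depending only on the data, so one can simply take $M$ larger than the maximum of the two. I do not foresee a real obstacle here, only the bookkeeping; the slight subtlety is that $h^{*}$ is merely upper-semicontinuous across the moving front $\{x\cdot e-c^{*}t=0\}$ (it jumps up to $\widehat{\eta}\bm{1}$), but this is harmless because the lemma only asks for a classical supersolution in the open region $\{x\cdot e-c^{*}t>0\}$, and the jump is in the favorable direction for the comparison arguments that will consume this lemma later.
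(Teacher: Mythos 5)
Your proposal is correct and follows essentially the same route as the paper: the supersolution property comes from the fact that both $v^*(\cdot;\lambda^*)$ and $\partial_\lambda v^*|_{\lambda=\lambda^*}$ solve the linearized system (via \eqref{v-star-equation} and \eqref{star-v-star-equation}) combined with \eqref{sublinearity}, the ordering $h^*\gg\omega^*$ reduces after cancellation of the $\kappa_2\partial_\lambda v^*$ terms to $(M+\kappa_2K)v^*(\cdot;\lambda^*)\gg\kappa_1 v^*(\cdot;\lambda^*+\delta)$, and the remaining claims follow from the explicit formula \eqref{v-derivative-lambda} for $M$ large. Your treatment of $h^*\gg\bm{0}$ via the single inequality $M\phi_{\lambda^*}\gg\kappa_2\partial_\lambda\phi_\lambda|_{\lambda=\lambda^*}$ on the whole half-space is in fact slightly cleaner than the paper's two-case split at $s_0$, but the argument is the same in substance.
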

\begin{proof}
Due to $\phi_{\lambda^*}\gg\bm{0}$ and $\phi_{\lambda^*+\delta}\gg\bm{0}$, under the fixed parameters in \eqref{subsol-form-critical}, one can choose $M_1>0$ such that
$$
(M_1+\kappa_2K)\min_{1\leq i\leq d}\min_{x\in\mathbb{T}^N}
\left(\phi_{\lambda^*}\right)_i(x)>\kappa_1
\max_{1\leq i\leq d}\max_{x\in\mathbb{T}^N}(\phi_{\lambda^*+\delta})_i(x).
$$
By Lemma \ref{subsol-construction-critical}, one has $|\omega^*(t,x)|_\infty<\widehat{\eta}$ for all $(t,x)\in\mathbb{R}\times\mathbb{R}^N$ where $\omega^*$ is given by \eqref{subsol-form-critical}.
Thus, for any $M>M_1$, one can easily check that $h^*(t,x)\gg\omega^*(t,x)$ for all $(t,x)\in\mathbb{R}\times\mathbb{R}^N$.
Thanks to \eqref{leading-term}, one also has
$h^*(t,x)\to\bm{0}$ uniformly as $x\cdot e-c^*t\to+\infty$.

Now, using \eqref{v-star-equation} with $\lambda=\lambda^*$ and   \eqref{star-v-star-equation}, we obtain that
$$
\partial_t h^*+Lh^*=D_uf(x,\bm{0})h^*.
$$
It then follows from \eqref{sublinearity} that $h^*(t,x)$ is a supersolution of \eqref{monotone-system} for all $(t,x)\in\mathbb{R}\times\mathbb{R}^N$ satisfying $x\cdot e-c^*t>0$ as soon as $h^*\geq\bm{0}$.

It remains to fix a suitable $M>0$ such that $h^*\gg\bm{0}$.
As in the proof of Lemma \ref{subsol-construction-critical}, we distinguish between two cases.
If $x\cdot e-c^*t\geq s_0$ ($s_0>0$ is as in Lemma \ref{subsol-construction-critical}), then the estimate \eqref{leading-term} applied at $\lambda=\lambda^*$ yields $h^*\gg\bm{0}$ for any $M>0$.
Now, recall from $\eqref{v-derivative-lambda}$ that
$$
h^*(t,x)=e^{-\lambda^*(x\cdot e-c^*t)}
\left[M\phi_{\lambda^*}(x)+\kappa_2(x\cdot e-c^*t)
-\kappa_2\left(\frac{\partial\phi_{\lambda}(x)}{\partial\lambda}
\bigg|_{\lambda=\lambda^*}\right)\right].
$$
where $\partial_\lambda\phi_{\lambda}|_{\lambda=\lambda^*}$ is bounded and $\phi_{\lambda^*}\gg\bm{0}$.
Thus, if $0<x\cdot e-c^*t<s_0$, then there exists $M_2>0$ large enough such that
$$
M_2\min_{1\leq i\leq d}\min_{x\in\mathbb{T}^N}\left(\phi_{\lambda^*}\right)_i(x)
+\kappa_2(x\cdot e-c^*t)\geq\kappa_2
\max_{1\leq i\leq d}\max_{x\in\mathbb{T}^N}
\left(\frac{\partial\phi_{\lambda}}{\partial\lambda}
\bigg|_{\lambda=\lambda^*}\right)_i(x)
$$
for all $(t,x)\in\mathbb{R}\times\mathbb{R}^N$ satisfying $0<x\cdot e-c^*t<s_0$.
To conclude we fix some $M>\max(M_1,M_2)$ and then Lemma \ref{supersol-construction-critical} follows.
\end{proof}

\subsection{Description of a direction-dependent coordinate system}
\label{subsect:new-coordinate}
Here we introduce a coordinate system that depends on the direction of propagation for pulsating waves.

Let $e\in\mathbb{S}^{N-1}$ be given and denote by $\{e^1,\ldots,e^N\}$ the canonical basis of $\mathbb R^N$.
Consider a linear orthogonal transformation $R\in\mathcal{O}(\mathbb{R}^{N})$ such that
\begin{equation}\label{orthogonal-transf}
Re^1=e~\text{ and }~
e^\perp={\rm span}\{Re^i\}_2^N=
\left\{x\in\mathbb{R}^{N}\mid
\forall y\in\mathbb{R}^{N-1},~x=R(0,y)^{T}\right\}.
\end{equation}
For any $k\in\mathbb{Z}^{N}$, we write $k=(k\cdot e)e+k_\perp$ with
$k_\perp:=k-(k\cdot e)e\in e^\perp$.
By \eqref{orthogonal-transf}, we then have
\begin{equation}\label{R-property}
\begin{split}
&R^{-1}k=
\begin{pmatrix}
k\cdot e\\
\bm{0}
\end{pmatrix}
+R^{-1}k_\perp
~~\text{ and}\\
&R^{-1}k_\perp=
\begin{pmatrix}
 0\\
 \widehat{R}k_\perp
\end{pmatrix}
\in \bigoplus_{i=2}^N \mathbb R e^i=\{0\}\times\mathbb R^{N-1}
\end{split}
\end{equation}
for a linear map $\widehat{R}:e^\perp\to\mathbb{R}^{N-1}$.
Let $u(t,x)$ be a pulsating travelling wave of \eqref{monotone-system} propagating in the direction $e$ with speed $c\neq0$.
We define a function $\tilde{u}=(\tilde{u}_1,\ldots,\tilde{u}_d)^T(t,r,y)$ for
$(t,r,y)\in\mathbb{R}\times\mathbb{R}\times\mathbb{R}^{N-1}$ as
\begin{equation}\label{directional-var}
\tilde{u}(t,r,y)=u(t,R(r,y)^T)~\text{ with }r=x\cdot e.
\end{equation}
From \eqref{pulsating-condition} and \eqref{R-property}, we obtain
\begin{align*}
\tilde{u}\left(t+\frac{k\cdot e}{c},r,y\right)
&=u\left(t, R(r,y)^T-k\right)\\
&=u\left(t, R(r-k\cdot e,r)^T-R^{-1}k_\perp\right)\\
&=\tilde{u}\left(t,r-k\cdot e,y-\widehat{R}k_\perp\right).
\end{align*}
Furthermore, $\tilde{u}$ satisfies the system of parabolic equations
\begin{align}\label{new-var-system}
\partial_{t}\tilde{u}+\mathcal{L}\tilde{u}=\tilde{f}(r,y,\tilde{u}),
~~~(t,r,y)\in\mathbb{R}\times\mathbb{R}\times\mathbb{R}^{N-1},
\end{align}
where the nonlinearity $\tilde{f}=(\tilde{f}_1,\ldots,\tilde{f}_d)^T: \mathbb{R}\times\mathbb{R}^{N-1}\times\mathbb{R}^d\to\mathbb{R}^d$ is defined by
$$
\tilde{f}(r,y,\tilde{u})=f(R(r,y)^T,\tilde{u}),
$$
and $\mathcal{L}:={\rm diag}(\mathcal{L}^1,\cdots,\mathcal{L}^d)$ denotes a diagonal matrix of operators with $\mathcal{L}^i$ being a second-order elliptic operator of the form
\begin{equation}\label{nondivergence-operator-new-var}
\mathcal{L}^i(r,y)\tilde{u}_i:=-\tr\big(\widetilde{A}^i(r,y)
D^2_{(r,y)}\tilde{u}_i\big)
+\widetilde{q}^{\,i}(r,y)\cdot\nabla_{(r,y)}\tilde{u}_i,
~~i=1,\ldots,d.
\end{equation}
Here the matrix-valued function $\widetilde{A}^i:\mathbb{R}\times\mathbb{R}^{N-1}
\to\mathcal{S}_{N}(\mathbb{R})$ is given by
$$
\widetilde{A}^i(r,y)=RA^i(R(r,y)^T)R^T,
~~i=1,\ldots,d,
$$
and the vector-valued function $\widetilde{q}^{\,i}:\mathbb{R}\times\mathbb{R}^{N-1}\to\mathbb{R}^N$ is given by
$$
\widetilde{q}^{\,i}(r,y)=q^i(R(r,y)^T)R,
~~i=1,\ldots,d.
$$

As a conclusion, for each given $e\in\mathbb{S}^{N-1}$, the pulsating condition \eqref{pulsating-condition} for the wave $(c,u)$ of \eqref{monotone-system} can be rewritten as the following condition for the function $\tilde{u}$ given by \eqref{directional-var}:
\begin{equation}\label{R-pulsating-condition}
\begin{split}
 &\forall k=(k\cdot e)e+k_\perp\in \mathbb Z^N,~
  \forall(t,r,y)\in\mathbb R\times\mathbb{R}\times\mathbb{R}^{N-1},\\
  &\qquad\tilde{u}\left(t+\frac{k\cdot e}{c},r,y\right)=
  \tilde{u}\left(t,r-k\cdot e,y-\widehat{R}k_\perp\right),
  \end{split}
\end{equation}
and the limiting condition \eqref{limiting-condition} is now equivalent to
\begin{equation}\label{new-var-limiting-condition}
  \liminf_{r\to-\infty}\tilde{u}(t,r,y)\gg\bm{0}~~\text{ and }~~
  \lim_{r\to+\infty}\tilde{u}(t,r,y)=\bm{0}
\end{equation}
locally uniformly for $t\in\mathbb{R}$ and uniformly for $y\in\mathbb{R}^{N-1}$,
in which $\tilde{u}$ becomes an entire solution to the parabolic system \eqref{new-var-system}.
As in \cite{deng2023existence}, we call \eqref{R-pulsating-condition} the \emph{$R$-pulsating condition} in the sequel.

Since $f(\cdot,u)$ and all coefficients of \eqref{divergence} are $\mathbb{Z}^N$-periodic,
it is straightforward to check from \eqref{orthogonal-transf}-\eqref{R-property} that $\tilde{f}(\cdot,\cdot,\tilde{u})$ and all coefficients of the operator $\mathcal{L}$ involved in \eqref{new-var-system} satisfy
\begin{equation}\label{new-coef-pulsating}
\begin{split}
&\forall k=(k\cdot e)e+k_\perp\in \mathbb Z^N,
~\forall(r,y)\in\mathbb{R}\times\mathbb{R}^{N-1},\\
&\qquad\tilde{a}\left(r+k\cdot e,y+\widehat{R}k_\perp\right)=\tilde{a}(r,y),
\end{split}
\end{equation}
where $\tilde{a}\equiv\widetilde{A}^i(r,y),\,\widetilde{q}^{\,i}(r,y),\,\tilde{f}(r,y,\cdot)$
and one has set $\tilde{a}(r,y)=a(x)$ with $x=R(r,y)^T$.
\subsection{Construction of the wave in each rational direction}
Under the framework of section \ref{subsect:new-coordinate}, we prove in this part the existence of pulsating waves for \eqref{monotone-system}
propagating in the direction of any unit vector with rational coordinates.

In this and the next subsection we always assume $\lambda_1<0$, where $\lambda_1$ is defined by \eqref{generalized-PE}, from which the formulae \eqref{maximum-periodic-PE} and \eqref{formula-minimal-speed} imply that
$$
c^*(e)>0~\text{ for all }e\in\mathbb{S}^{N-1}.
$$
We also require a key lemma, which has been proved in \cite{deng2023existence}.
\begin{lemma}\label{change-basis}
Let $\zeta\in\mathbb{Q}^N\cap\mathbb{S}^{N-1}$ for some $N\geq2$ be given.
Then one can find an orthogonal basis $\{\zeta^1,\ldots,\zeta^N\}$ of $\mathbb{R}^N$ and $N$ constants $\tau_i>0$ such that
\begin{equation}\label{tau1}
  \zeta^1=\zeta,~~~\tau_1=\min_{k\cdot\zeta>0,\,k\in\mathbb{Z}^N}k\cdot\zeta,
\end{equation}
and
  $$
  \mathbb{Z}^N=\bigoplus_{i=1}^N\tau_i\mathbb{Z}\zeta^i,
  ~~~\zeta^{i}\perp\zeta^{j}~(i\neq j),~~i,j=1,\ldots,N.
  $$
\end{lemma}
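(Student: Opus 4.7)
The proof is explicit and crucially exploits the rationality of $\zeta$. Since $\zeta \in \mathbb{Q}^N \cap \mathbb{S}^{N-1}$, we may write $\zeta = p/|p|$ where $p \in \mathbb{Z}^N$ is a \emph{primitive} integer vector (i.e.\ the greatest common divisor of its entries equals $1$). The first basis vector is taken to be $\zeta^1 := \zeta$.

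To identify $\tau_1$, primitivity of $p$ combined with Bezout's identity yields some $k_0 \in \mathbb{Z}^N$ with $k_0 \cdot p = 1$, hence $k_0 \cdot \zeta = 1/|p|$; conversely, every $k \in \mathbb{Z}^N$ satisfies $k \cdot \zeta = (k \cdot p)/|p| \in (1/|p|)\mathbb{Z}$. Thus the additive subgroup $\{k \cdot \zeta : k \in \mathbb{Z}^N\}$ of $\mathbb{R}$ equals $(1/|p|)\mathbb{Z}$, whose smallest positive element gives $\tau_1 = 1/|p|$, matching the claimed formula \eqref{tau1}.

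The remaining vectors $\zeta^2,\ldots,\zeta^N$ are then built inductively inside the orthogonal complement of the previously constructed ones. Rationality of $\zeta$ ensures that $\zeta^\perp$ is itself a rational subspace, so $\mathbb{Z}^N \cap \zeta^\perp$ is a sublattice of rank $N-1$; at each subsequent step I would select a primitive integer vector in the intersection of the remaining orthogonal subspace with $\mathbb{Z}^N$, normalize it to obtain $\zeta^i$, and recurse inside the next orthogonal complement. For each $i \geq 2$, the scalar $\tau_i := \min\{k \cdot \zeta^i > 0 : k \in \mathbb{Z}^N\}$ is then well-defined and strictly positive by the same Bezout-type argument applied to the primitive integer vector extracted at step $i$. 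Pairwise orthogonality of the $\zeta^i$'s and the fact that they span $\mathbb{R}^N$ follow directly from the construction.

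The main obstacle is the verification of the direct-sum decomposition $\mathbb{Z}^N = \bigoplus_{i=1}^N \tau_i \mathbb{Z}\zeta^i$. The easy direction --- that every $k \in \mathbb{Z}^N$ has orthogonal coordinates $k \cdot \zeta^i \in \tau_i \mathbb{Z}$ --- follows immediately from the definition of $\tau_i$ together with the orthogonality of the basis. The delicate part is to ensure that the $\zeta^i$'s and constants $\tau_i$ can be chosen so that the $\mathbb{Z}$-span of $\{\tau_i \zeta^i\}_{i=1}^N$ reconstructs $\mathbb{Z}^N$ exactly; this requires aligning the successive orthogonal complements $(\zeta^1,\ldots,\zeta^{i-1})^\perp$ with primitive integer directions in a compatible way, and then checking, by an inductive descent on $i$, that the residual lattice after peeling off the $i$-th coordinate remains of the same form in the next orthogonal slice. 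I expect this compatibility check, rather than the orthogonalization itself, to be the main technical step of the proof.
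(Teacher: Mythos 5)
The paper offers no proof of Lemma \ref{change-basis} to compare against (it is quoted from \cite{deng2023existence}), so your proposal must stand on its own. The parts you do carry out are fine: writing $\zeta=p/|p|$ with $p\in\mathbb{Z}^N$ primitive, B\'ezout indeed gives $\{k\cdot\zeta: k\in\mathbb{Z}^N\}=|p|^{-1}\mathbb{Z}$, hence $\tau_1=1/|p|$; the inductive choice of primitive integer vectors $w^i$ in successive rational orthogonal complements is sound; and for the resulting orthonormal $\zeta^i=w^i/|w^i|$ the inclusion $\mathbb{Z}^N\subseteq\bigoplus_i\tau_i\mathbb{Z}\zeta^i$ is immediate from $k=\sum_i(k\cdot\zeta^i)\zeta^i$ and $k\cdot\zeta^i\in\tau_i\mathbb{Z}$.

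The genuine gap is precisely the step you defer, and it is not a ``compatibility check'' that a cleverer choice of $\zeta^2,\ldots,\zeta^N$ can repair: the reverse inclusion $\bigoplus_i\tau_i\mathbb{Z}\zeta^i\subseteq\mathbb{Z}^N$ is equivalent to $\tau_i\zeta^i\in\mathbb{Z}^N$ for every $i$, and for $i=1$, with $\zeta^1=\zeta$ and $\tau_1=1/|p|$ both prescribed by \eqref{tau1}, this reads $p/|p|^2\in\mathbb{Z}^N$, which by primitivity forces $|p|^2\mid\gcd(p_1,\ldots,p_N)=1$, i.e.\ $\zeta=\pm e^j$. (More structurally, if $v$ belongs to an orthogonal $\mathbb{Z}$-basis of $\mathbb{Z}^N$ then the projection of each $e^j$ onto $\mathbb{R}v$ must lie in $\mathbb{Z}v$, forcing $|v|^2\mid v_j$ for all $j$ and hence $|v|=1$; so the only orthogonal lattice bases of $\mathbb{Z}^N$ are signed permutations of the standard one.) Already for $\zeta=(1,1)/\sqrt2$ one finds $\tau_1\zeta^1=(1/2,1/2)$, $\tau_2\zeta^2=(1/2,-1/2)$ and $\bigoplus_i\tau_i\mathbb{Z}\zeta^i\supsetneq\mathbb{Z}^2$ with index $2$. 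What your construction actually establishes, and what is used to derive \eqref{rational-direction-pulsating}, is the single inclusion $\mathbb{Z}^N\subseteq\bigoplus_i\tau_i\mathbb{Z}\zeta^i$; the periodicity \eqref{coef-periodicity-rational} rests on the separate fact that the integer vectors $w^i=\zeta^i/\tau_i$ (i.e.\ suitable integer multiples of $\tau_i\zeta^i$) belong to $\mathbb{Z}^N$. You should therefore retrieve the precise formulation from \cite{deng2023existence} and prove that two-sided but weaker pair of statements, rather than the literal equality $\mathbb{Z}^N=\bigoplus_i\tau_i\mathbb{Z}\zeta^i$, which your plan cannot deliver because it is false except for coordinate directions.
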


Now, let $\zeta\in\mathbb{Q}^N\cap\mathbb{S}^{N-1}$ be given.
For any $k\in\mathbb{Z}^N$, we decompose $k$ along $\zeta$ and $\zeta^\perp$, that is,
$k=(k\cdot\zeta)\zeta+k_\perp$ with $k_\perp\in\zeta^\perp$.
By Lemma \ref{change-basis}, we have
$$
k\cdot\zeta=\tau_{1}p_1,~~~
k_\perp=\sum_{i=2}^{N}\tau_ip_i\zeta^i,
$$
where $p_i\in\mathbb{Z}$ for all $1\leq i\leq N$.
The $R$-pulsating condition \eqref{R-pulsating-condition} then reduces to
\begin{equation}\label{rational-direction-pulsating}
  \begin{dcases}
  \forall(t,r)\in\mathbb{R}^{2},~\forall p_1\in\mathbb{Z},~~
  \tilde{u}\left(t+\frac{p_1\tau_1}{c},r,y\right)=\tilde{u}(t,r-p_1\tau_1,y),
  \\
  \forall y\in\mathbb{R}^{N-1},~\forall l\in\tau_2\mathbb{Z}\times\cdots
  \times\tau_N\mathbb{Z},~~\tilde{u}(t,r,y+l)=\tilde{u}(t,r,y),
  \end{dcases}
\end{equation}
where $r=x\cdot\zeta$ by \eqref{directional-var}.
Moreover,
the property \eqref{new-coef-pulsating} satisfied by $\tilde{f}$ and all coefficients involved in \eqref{nondivergence-operator-new-var} reduces to the following periodicity:
\begin{equation}\label{coef-periodicity-rational}
\begin{split}
&\forall(r,y)\in\mathbb{R}\times\mathbb{R}^{N-1},
~\forall l\in\tau_2\mathbb{Z}\times\cdots\times\tau_N\mathbb{Z},\\
&\qquad \tilde{a}(r+\tau_1,y)=\tilde{a}(r,y+l)=\tilde{a}(r,y).
\end{split}
\end{equation}
For notational simplicity, set $\tau^\prime:=(\tau_2,\ldots,\tau_N)$ and define an $(N-1)$-dimensional torus as
$$
\mathbb{T}^{N-1}_{\tau^\prime}=\mathbb{R}/\tau_2\mathbb{Z}\times\cdots
\times\mathbb{R}/\tau_N\mathbb{Z}.
$$
We then conclude from \eqref{new-var-system}-\eqref{new-coef-pulsating} that a pulsating travelling wave of \eqref{monotone-system}  propagating along the direction $\zeta\in\mathbb{Q}^{N}\cap\mathbb{S}^{N-1}$ with speed $c\neq0$ is a solution to the problem
\begin{equation}\label{rational-direction-problem}
\begin{dcases}
\partial_{t}\tilde{u}+\mathcal{L}\tilde{u}=\tilde{f}(r,y,\tilde{u})~~~~~\text{ for }
(t,r,y)\in\mathbb{R}\times\mathbb{R}\times\mathbb{T}^{N-1}_{\tau^\prime},\\
\forall(t,r)\in\mathbb{R}^2,~y\in\mathbb{T}^{N-1}_{\tau^\prime},~~
\tilde{u}\left(t+\frac{\tau_1}{c},r,y\right)=\tilde{u}(t,r-\tau_1,y),
\\\bm{0}\ll\tilde{u}\leq\widehat{\eta}\bm{1},~
\lim_{r\to+\infty}\tilde{u}(t,r,y)=\bm{0},~~
  \liminf_{r\to-\infty}\tilde{u}(t,r,y)\gg\bm{0},
\end{dcases}
\end{equation}
where the convergence holds locally uniformly for $t\in\mathbb{R}$ and uniformly for
$y\in\mathbb{T}^{N-1}_{\tau^\prime}$.

To address \eqref{rational-direction-problem}, we follow and extend some ideas taken from    \cite{griette2021propagation}.
Roughly speaking, we first consider a similar problem posed
in a semi-infinite periodic cylinder with a boundary moving at a velocity $c>0$ equipped with homogeneous Neumann data.
This approximated problem will be solved via a fixed-point argument.
Afterwards, we pass to the limit to find a one-dimensional front propagation inside the whole cylinder.

The main result of this subsection reads as follows.
\begin{proposition}\label{rational-direction-existence}
Let $\zeta\in\mathbb{Q}^N\cap\mathbb{S}^{N-1}$ be given and assume \eqref{sublinearity} holds.
Then for every $c\geq c^*(\zeta)$, the problem \eqref{rational-direction-problem} has a classical solution $\tilde{u}_c$.
\end{proposition}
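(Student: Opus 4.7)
The plan is to follow the roadmap sketched in Section \ref{outline-proof}. I would treat the cases $c>c^*(\zeta)$ and $c=c^*(\zeta)$ in parallel, using the supersolution $h$ from \eqref{supersol-form-any-speed} paired with the subsolution $\omega$ from Lemma \ref{subsol-construction-larger-speeds} when $c>c^*(\zeta)$, and the pair $(h^*,\omega^*)$ from Lemmas \ref{subsol-construction-critical}--\ref{supersol-construction-critical} at the critical speed $c=c^*(\zeta)$. After passing to the direction-dependent coordinates $(r,y)$, the ``twisted'' pulsating condition \eqref{rational-direction-pulsating} amounts to periodicity in $y$ on $\mathbb T^{N-1}_{\tau'}$ together with a shift-in-time/shift-in-$r$ symmetry with shift $\tau_1/c$ in time and $\tau_1$ in $r$. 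This suggests searching for $\tilde u_c$ as an entire solution on the cylinder $\mathbb R\times\mathbb R\times\mathbb T^{N-1}_{\tau'}$ satisfying this shift symmetry.

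The core idea is to realize the shift symmetry as a fixed-point identity for a solution map. Given an admissible initial datum $\tilde u_0(r,y)$ belonging to the order interval between $\max(\omega(0,\cdot),\bm 0)$ and $h(0,\cdot)$ (with the critical-speed analogue $\omega^*(0,\cdot)$, $h^*(0,\cdot)$ when $c=c^*(\zeta)$), I would define the map $\mathcal T\tilde u_0(r,y):=\tilde U(\tau_1/c, r-\tau_1,y)$, where $\tilde U$ is the solution of the Cauchy problem for \eqref{new-var-system} with initial datum $\tilde u_0$. Because $h$ and $\omega$ are themselves $R$-pulsating (the $r-ct$ variable in \eqref{supersol-form-any-speed}--\eqref{subsol-form-larger-speeds} is invariant under such a shift), and by the comparison principle for the cooperative system \eqref{new-var-system}, the order interval between $\max(\omega(0,\cdot),\bm0)$ and $h(0,\cdot)$ is invariant under $\mathcal T$. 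To make Schauder's theorem applicable I would first approximate on a moving semi-infinite cylinder $\Omega_a:=\{(t,r,y)\mid r-ct>a\}\times\mathbb T^{N-1}_{\tau'}$ equipped with homogeneous Neumann conditions on the moving boundary $\{r=ct+a\}$; the lateral boundary condition is compatible with the order interval because $\max(\omega,\bm 0)$ and $h$ satisfy a suitable inequality there (by construction $\omega$ is negative and the Neumann trace of $h$ can be controlled). Standard Schauder/Lipschitz parabolic estimates then give compactness of $\mathcal T$ in an appropriate subset of $BUC(\mathbb R\times\mathbb T^{N-1}_{\tau'})^d$, yielding a fixed point $\tilde u_{c,a}$; by construction and by \eqref{coef-periodicity-rational}, this fixed point extends to an $R$-pulsating classical solution of \eqref{new-var-system} on $\Omega_a$.

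Letting $a\to-\infty$ and invoking standard interior Schauder estimates for parabolic systems together with a diagonal extraction, I would extract a limit $\tilde u_c$ that is an entire classical solution of \eqref{new-var-system} on the full cylinder, lies in the order interval $[\max(\omega,\bm0),h]$ (respectively $[\max(\omega^*,\bm 0),h^*]$), and inherits the $R$-pulsating symmetry. The limit $\lim_{r\to+\infty}\tilde u_c(t,r,y)=\bm 0$ is then immediate from the upper bound $\tilde u_c\leq h$ (respectively $h^*$), which decays to $\bm 0$ as $r-ct\to+\infty$. Strict positivity $\tilde u_c\gg\bm 0$ follows from the strong maximum principle and the fact that $D_u f(\cdot,\bm 0)$ is fully coupled by \ref{item:irreducibility}, since the subsolution is not identically zero.

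The main obstacle is verifying the persistence condition $\liminf_{r\to-\infty}\tilde u_c(t,r,y)\gg\bm 0$: neither $\omega$ nor $\omega^*$ furnishes this lower bound, as both vanish as $r-ct\to-\infty$. Following the device of Griette and Matano \cite{griette2024front}, I would construct an auxiliary stationary lower barrier $\underline v$ that is uniformly positive for $r\leq r_0$ and below any nontrivial equilibrium-like state of the system. A natural candidate is a small multiple of the principal Dirichlet eigenfunction $\varphi^R$ from \eqref{Dirichlet-PE} (extended by zero outside $B_R$), rescaled via a suitable cut-off in the $r$-variable to respect the $\tau_1$-periodicity of $\tilde a$: by choosing $R$ so large that $\lambda_1^R<0$ (possible since $\lambda_1\leq\lambda_1^p<0$ by \eqref{relations-among-3PEs} and $\lambda_1^R\searrow\lambda_1$ by Proposition \ref{generalized-principal-eigenvalue-properties}) and scaling the amplitude below $\widehat\eta$, the assumptions \ref{item:upper-bound} and \eqref{sublinearity} together with a Taylor expansion near $\bm 0$ turn this into a genuine subsolution. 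A parabolic comparison/sliding argument then pushes $\tilde u_c$ above $\underline v$ on the semi-infinite half $\{r\leq r_0\}$, uniformly in $t$ and $y$. The critical speed $c=c^*(\zeta)$ requires extra care because the convergence of $\omega^*$ to $\bm 0$ near $r=c^*t$ is polynomial-times-exponential rather than purely exponential; here the specific structure of $\omega^*$ given in \eqref{subsol-form-critical}, combined with the strict negativity of $r^*_\delta$ in \eqref{r*-delta-def}, is what guarantees that the lower-barrier argument still closes.
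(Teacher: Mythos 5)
Your proposal reproduces the paper's architecture almost exactly — the direction-dependent coordinates, the reformulation as a fixed point of the time-$\tau_1/c$ solution map composed with a spatial shift by $\tau_1$, the approximation on a semi-infinite cylinder with a moving Neumann boundary, Schauder's theorem on the order interval, and the limit $a\to-\infty$ — but it diverges on the one point the paper advertises as its main novelty here: the left limit $\liminf_{r\to-\infty}\tilde u_c\gg\bm{0}$. The paper does not take $\max(\omega,\bm{0})$ as the lower barrier; it takes $\underline{\tilde u}_c(t,r,y)=\max\big(\bm{0},\sup_{n\in\mathbb{N}}\tilde\omega(t,r+n\tau_1,y)\big)$ (and its finite truncation $\underline{\tilde u}^a_c$ on $\Sigma^+_a$), i.e.\ the componentwise supremum of all forward integer translates of the subsolution. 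Each translate is a subsolution on its own positivity set, the whole family is invariant under the combined shift $(t,r)\mapsto(t+\tau_1/c,\,r+\tau_1)$, so Claim \ref{invariance} propagates the barrier; and since for $r-ct$ very negative some translate sits near the maximizer of $\omega$, this barrier is uniformly positive as $r\to-\infty$, so the persistence condition falls out of the sandwich with no further argument. Your alternative — keep the weak barrier $\max(\omega,\bm{0})$ and recover persistence afterwards from a Dirichlet-eigenfunction subsolution plus the pulsating relation — is viable (it is precisely how the paper handles irrational directions, via Proposition \ref{local-uniform-persistence} in the proof of Theorem \ref{main-result-existence}, under the standing hypothesis $\lambda_1<0$ which gives $\lambda_1^R<0$ for $R$ large), but it costs you the explicit lower bound \eqref{estimate-rational-c} and makes the rational-direction step less self-contained; also no cut-off in $r$ is needed — one uses the $\mathbb{Z}^N$-translates of $\varphi^R$, compares with the Cauchy problem started from $\max(\omega(t_0,\cdot),\bm{0})$, and converts $\liminf_{t\to+\infty}$ into $\liminf_{r\to-\infty}$ through \eqref{rational-direction-pulsating}. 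Two slips to fix: your inequality chain should read $\lambda_1^p\le\lambda_1<0$ (cf.\ \eqref{relations-among-3PEs}), and the fixed-point map must be $\varphi\mapsto\tilde u(\tau_1/c,\cdot+\tau_1,\cdot)$ — your shift $r-\tau_1$ has the wrong sign for a front moving in the $+\zeta$ direction, since \eqref{rational-direction-pulsating} gives $\tilde u(\tau_1/c,r+\tau_1,y)=\tilde u(0,r,y)$. Finally, the compatibility with the Neumann condition at the moving boundary is handled in the paper not by controlling the Neumann trace of $h$ but by choosing $a$ so that $\tilde h\ge\widehat\eta\bm{1}$ near $r=a+ct$, which makes the upper barrier constantly equal to $\widehat\eta\bm{1}$ (and the lower barrier $\bm{0}$) there.
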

\begin{proof}
The proof is divided into three steps.
We shall treat two cases $c>c^*(\zeta)$ and $c=c^*(\zeta)$ in parallel,
rather than using a usual limiting argument to deal with the latter.
In addition, the uniform positivity of solutions as $r\to-\infty$ will be directly derived from a novel lower barrier.
\begin{step}
Some preliminaries.
\end{step}
For any $c>c^*(\zeta)$, we define for all $(t,r,y)\in\mathbb{R}\times\mathbb{R}\times\mathbb{R}^{N-1}$:
\begin{align}
\tilde{h}(t,r,y)&:=h(t,R(r,y)^T)=e^{-\lambda_c(r-ct)}\phi_{\lambda_c}(R(r,y)^T),\label{h-tilde-def}\\
\tilde{\omega}(t,r,y)&:=\omega(t,R(r,y)^T)=h(t,R(r,y)^T)
-Ke^{-(\lambda_c+\delta)(r-ct)}\phi_{\lambda_c+\delta}(R(r,y)^T),\label{omega-tilde-def}
\end{align}
where $h$ and $\omega$ are defined by \eqref{supersol-form-any-speed} and \eqref{subsol-form-larger-speeds} (recall that $x=R(r,y)^T$ under the change of variables \eqref{directional-var}).
Similarly, for $c=c^*(\zeta)$, we define
$$
\tilde{h}^*(t,r,y):=h^*(t,R(r,y)^T),~~~
\tilde{\omega}^*(t,r,y):=\omega^*(t,R(r,y)^T),
$$
where $h^*$ and $\omega^*$ are defined by \eqref{supersol-form-critical} and \eqref{subsol-form-critical}.
It is clear to see that $\tilde{h}$, $\tilde{\omega}$, $\tilde{h}^*$ and $\tilde{\omega}^*$ satisfy the same pulsating properties of $(t,r)$ and periodicity of $y$ as in \eqref{rational-direction-pulsating}.
Observe also that
\begin{equation*}
\begin{split}
&\forall n\in\mathbb{N}^*,~~~\forall(t,r,y)\in\mathbb{R}\times\mathbb{R}\times
\mathbb{T}_{\tau^\prime}^{N-1},\\
\tilde{\omega}(t,r+n\tau_1,&y)\ll\tilde{h}(t,r+n\tau_1,y)=
e^{-n\tau_1\lambda_c}\tilde{h}(t,r,y)\ll\tilde{h}(t,r,y),\\
\end{split}
\end{equation*}
where equality holds since the principal eigenfunction $\phi_{\lambda}(R(r,y)^T)$ in the direction $\zeta\in\mathbb{Q}^N\cap\mathbb{S}^{N-1}$ satisfies the same periodicity of $(r,y)$ as in \eqref{coef-periodicity-rational}.
As for the case $c=c^*(e)$, from \eqref{v-star}, \eqref{v-derivative-lambda} and \eqref{supersol-form-critical}, we find that
$$
h^*(t,R(r+n\tau_1,y)^T)=e^{-n\tau_1\lambda^*}h^*(t,R(r,y)^T)
+\kappa_2n\tau_1e^{-\lambda^*(r+n\tau_1-c^*(\zeta)t)}\phi_{\lambda^*}(R(r,y)^T).
$$
Thus, if $r+n\tau_1-c^*(\zeta)t>0$, one can choose a constant $\widehat{\gamma}=\widehat{\gamma}_{\kappa_2,\tau_1}>0$ such that
$$
\forall n\in\mathbb{N}^*,~\forall(t,r,y)\in\mathbb{R}^2\times
\mathbb{T}_{\tau^\prime}^{N-1},~~
\tilde{h}^*(t,r+n\tau_1,y)\leq\widehat{\gamma}\tilde{h}^*(t,r,y).
$$
By Lemma \ref{supersol-construction-critical}, one also has
\begin{equation*}
\begin{split}
\forall n\in\mathbb{N}^*,~\forall(t,r,y)\in\mathbb{R}^2\times
\mathbb{T}_{\tau^\prime}^{N-1},~~
\tilde{\omega}^*(t,r+n\tau_1,y)\ll\widehat{\gamma}\tilde{h}^*(t,r,y).
\end{split}
\end{equation*}
Consider now the following functions defined for $(t,r,y)\in\mathbb{R}\times\mathbb{R}\times\mathbb{T}_{\tau^\prime}^{N-1}$ as
\begin{align}
&\overline{\tilde{u}}_c(t,r,y)=\min\big(\widehat{\eta}\bm{1},\tilde{h}(t,r,y)\big),
~~~~~~\underline{\tilde{u}}_c(t,r,y)=\max\left(\bm{0},
\sup_{n\in\mathbb{N}}\tilde{\omega}(t,r+n\tau_1,y)\right),\label{bound-rational-c}\\
&\overline{\tilde{u}}_{c^*}(t,r,y)=\min\big(\widehat{\eta}\bm{1},\widehat{\gamma}
\tilde{h}^*(t,r,y)\big),~~
\underline{\tilde{u}}_{c^*}(t,r,y)=\max\left(\bm{0},
\sup_{n\in\mathbb{N}}\tilde{\omega}^*(t,r+n\tau_1,y)\right).\label{bound-rational-c*}
\end{align}
Then, $\overline{\tilde{u}}_c\gg\underline{\tilde{u}}_c\geq\bm{0}$ and $\overline{\tilde{u}}_{c^*}\gg\underline{\tilde{u}}_{c^*}\geq\bm{0}$.
Moreover, $\underline{\tilde{u}}_c$ and $\underline{\tilde{u}}_{c^*}$ are uniformly positive (that is, bounded from below by a positive constant) as $r\to-\infty$.
\begin{step}
An approximated problem posed in semi-infinite periodic cylinders.
\end{step}
Denote by $\lfloor x\rfloor$ the largest integer below $x$ and fix $a<0$ such that
$$
\forall t\geq0,~r\leq\lfloor{-a}/{\tau_1}\rfloor+ct,~y\in\mathbb{T}_{\tau^\prime}^{N-1},
~~\min_{1\leq i\leq d}\tilde{h}_i(t,r,y)\geq\widehat{\eta}
~~\text{ if $c>c^*(\zeta)$}.
$$
Note that by Lemma \ref{supersol-construction-critical}, the above holds for any $a<0$ when $c=c^*(\zeta)$.
Define
\begin{align}
&\underline{\tilde{u}}_c^a(t,r,y)=\max\left(\bm{0},
\max_{0\leq n\leq\lfloor-a/\tau_1\rfloor}\tilde{\omega}(t,r+n\tau_1,y)\right),
\label{lower-barrier-finite-c}\\
&\underline{\tilde{u}}_{c^*}^a(t,r,y)=\max\left(\bm{0},
\max_{0\leq n\leq\lfloor-a/\tau_1\rfloor}\tilde{\omega}^*(t,r+n\tau_1,y)\right).
\label{lower-barrier-finite-c*}
\end{align}
For all $c\geq c^*(\zeta)$, it is clear that $\underline{\tilde{u}}_c^a\to\underline{\tilde{u}}_c$ as $a\to-\infty$ where $\underline{\tilde{u}}_c$ and $\underline{\tilde{u}}_{c^*}$ are defined in \eqref{bound-rational-c} and \eqref{bound-rational-c*}.
For brevity we drop the subscripts $c$ and $c^*$ in \eqref{bound-rational-c}-\eqref{lower-barrier-finite-c*} when $t=0$.

Set $\Sigma^+_a:=(a,\infty)\times\mathbb{T}_{\tau^\prime}^{N-1}$.
From the above construction and Lemmas \ref{subsol-construction-larger-speeds}-\ref{subsol-construction-critical},
we find that $\overline{\tilde{u}}(0,r,y)\gg\underline{\tilde{u}}^a(0,r,y)$, and that $\overline{\tilde{u}}(0,r,y)$ and $\underline{\tilde{u}}^a(0,r,y)$ are identically equal to $\widehat{\eta}\bm{1}$ and $\bm{0}$ near the bottom of $\Sigma^+_a$, respectively.
Let $E_a$ be the set defined by
$$
E_a=\left\{\varphi\in BUC
(\overline{\Sigma^+_a})^d\mid\underline{\tilde{u}}^a(0,r,y)\leq
\varphi(r,y)\leq\overline{\tilde{u}}(0,r,y)\right\},
$$
which is convex and compactly included in $BUC(\overline{\Sigma^+_a})^d$.
We now equip $E_a$ with the uniform convergence topology and
set up a fixed-point problem for an approximation of the solution to \eqref{rational-direction-problem} in $E_a$.
To do so, fix $c\geq c^*(\zeta)$ and define a map as
\begin{equation}\label{Q-def}
\begin{split}
Q^a\colon E_a&\to BUC(\overline{\Sigma^+_a})^d,\\
\varphi&\mapsto\tilde{u}\left(\frac{\tau_1}{c},\cdot+\tau_1,\cdot\right),
\end{split}
\end{equation}
where $\tilde{u}$ is the solution to the initial-boundary value problem
\begin{equation}\label{fixed-point-problem}
  \begin{dcases}
  \partial_{t}\tilde{u}+\mathcal{L}\tilde{u}=\tilde{f}(r,y,\tilde{u})
   &\text{for }(t,r,y)\in(0,\infty)\times(a+ct,\infty)
   \times\mathbb{T}^{N-1}_{\tau^\prime},\\
   \partial_{r}\tilde{u}(t,a+ct,y)=\bm{0}
  &\text{for }(t,y)\in(0,\infty)\times
  \mathbb{T}^{N-1}_{\tau^\prime},\\
  \tilde{u}(0,r,y)=\varphi(r,y)
  &\text{for }(r,y)\in\overline{\Sigma^+_a}.
  \end{dcases}
\end{equation}
\noindent
It is known that for any given $\varphi\in BUC(\overline{\Sigma^+_a})^d$, problem \eqref{fixed-point-problem} admits a unique solution $\tilde{u}(t;\varphi)\in BUC([a+ct,\infty)\times\mathbb{T}^{N-1}_{\tau^\prime})^d$ defined on $\mathbb{R}_+$,
and it further becomes smooth as $t$ runs.

Our next aim is to check that the set $E_a$ is stable under $Q^a$,
and that the map $Q^a$ is compact from $E_a$ into itself.
The former follows from the following claim.
\begin{claim}\label{invariance}
  The solution $\tilde{u}=\tilde{u}(t,r,y)$ of \eqref{fixed-point-problem} satisfies
  $$
  \underline{\tilde{u}}^a_c(t,r,y)\leq\tilde{u}(t,r,y)\leq\overline{\tilde{u}}_c(t,r,y)
  ~~\text{ in }\mathbb{R}_+\times[a+ct,\infty)\times\mathbb{T}^{N-1}_{\tau^\prime},
  $$
where $\overline{\tilde{u}}_c$ and $\underline{\tilde{u}}^a_c$ for all $c\geq c^*(\zeta)$ are defined in \eqref{bound-rational-c}-\eqref{lower-barrier-finite-c*}, respectively.
\end{claim}

Before proving this claim let us proceed with our argument.
For any given $\varphi\in E_a$, we infer from Claim \ref{invariance} and the pulsating properties of $\underline{\tilde{u}}^a_c$ and $\overline{\tilde{u}}_c$ that the map $Q^a$, defined by \eqref{Q-def}, satisfies
$$
\underline{\tilde{u}}^a(0,r,y)=
\underline{\tilde{u}}^a_c\left(\frac{\tau_1}{c},r+\tau_1,y\right)\leq
Q^a(\varphi)(r,y)
\leq\overline{\tilde{u}}_c\left(\frac{\tau_1}{c},r+\tau_1,y\right)
=\overline{\tilde{u}}(0,r,y)
$$
for all $(r,y)\in\overline{\Sigma^+_a}$, that is, $Q^a(\varphi)\in E_a$.
The latter compactness property is standard, owing to the smoothing effect of the uniform parabolic operators.
For completeness we include its proof.
Take $\{\varphi^n\}_{n\in\mathbb{N}}\subset E_a$ and set $\tilde{u}^n:=Q^a(\varphi^n)$.
Due to $\tau_1/c>0$, the classical parabolic estimates yield that for every fixed $M>0$, $\{\tilde{u}^n(\tau_1/c,\cdot+\tau_1,\cdot)\}_{n\in\mathbb{N}}$ is uniformly bounded in $C^{2+\alpha}([a,M]\times\mathbb{T}^{N-1}_{\tau^\prime})^d$.
Hence one may extract (using a diagonal process) a subsequence of $\{\tilde{u}^n\}$, not relabelled, which converges in
$C^2_{\rm loc}([a,\infty)\times\mathbb{T}^{N-1}_{\tau^\prime})^d$ to a vector-valued function $\tilde{u}$ satisfying \eqref{Q-def} and remaining in $E_a$ due to Claim \ref{invariance}.
Furthermore, the convergence is uniform for $r\in[a,\infty)$.
Indeed, since $\overline{\tilde{u}}(0,r,\cdot)\to\bm{0}$ as $r\to+\infty$, by continuity, for any given $\epsilon>0$ and $M$ sufficiently large, there exists
$n_0\in\mathbb{N}$ independent of $M$ such that
$$
\forall n\geq n_0,~\forall r\geq M,~\forall y\in\mathbb{T}^{N-1}_{\tau^\prime},
~~|\tilde{u}^n(\tau_1/c,r+\tau_1,y)|_\infty\leq\epsilon.
$$
Thus, the map $Q^a$ is compact from $E_a$ into itself.

As a consequence, the set $\overline{Q^a(E_a)}$ is compact in $E_a$.
We then conclude from the Schauder fixed-point theorem that $Q^a$ has a fixed point in $E_a$, denoted as $\tilde{u}^a$ satisfying
$$
Q^a(\tilde{u}^a)=\tilde{u}^a\left(\frac{\tau_1}{c},\cdot+\tau_1,\cdot\right)\in E_a.
$$
\begin{step}
Passage to the limit in the infinite cylinders.
\end{step}
Take any sequence $\{a_n\}_{n\in\mathbb{N}}\subset(-\infty,0)$ such that $a_n\to-\infty$ as $n\to\infty$.
Let
$$
Q^{a_n}(\tilde{u}^{a_n})(r,y)=\tilde{u}^{a_n}\left(\frac{\tau_1}{c},r+\tau_1,y\right)
~\text{ for }(r,y)\in\overline{\Sigma^+_{a_n}}:=[a_n,\infty)\times
\mathbb{T}_{\tau^\prime}^{N-1}
$$
be the solutions to the problem \eqref{Q-def}.
Due to parabolic regularity,
$Q^{a_n}(\tilde{u}^{a_n})$ for each $n\in\mathbb{N}$ is bounded in $C^{2+\alpha}(\overline{\Sigma^+_{a_n}})^d$.
Up to extracting a subsequence,
one may assume that
$$
\tilde{u}^{a_n}\left(\frac{\tau_1}{c},r+\tau_1,\cdot\right)\to
\tilde{u}^{\infty}\left(\frac{\tau_1}{c},r+\tau_1,\cdot\right)
\text{ locally uniformly for }r\in\mathbb{R}.
$$
Moreover, the positive invariance of $E_{a_n}$ ensures that
$$
\tilde{u}^{\infty}\in E_\infty:=\left\{\varphi\in BUC
(\mathbb{R}\times\mathbb{T}_{\tau^\prime}^{N-1})^d
\mid\underline{\tilde{u}}(0,r,y)\leq
\varphi(r,y)\leq\overline{\tilde{u}}(0,r,y)\right\},
$$
whence
$\tilde{u}^{\infty}$ becomes a fixed point of the map $Q^\infty$ given by \eqref{Q-def} with $a=-\infty$, that is, $Q^\infty(\tilde{u}^{\infty})=\tilde{u}^{\infty}$.
As a consequence, for each given $c\geq c^*(\zeta)$, the problem \eqref{rational-direction-problem} admits a classical entire solution $\tilde{u}\equiv\tilde{u}^{\infty}(t,r,y)$ satisfying
$$
\begin{dcases}
\tilde{u}^{\infty}\left(t+\frac{\tau_1}{c},r,y\right)=\tilde{u}^{\infty}(t,r-\tau_1,y)\\
\underline{\tilde{u}}_c(t,r,y)\leq\tilde{u}^\infty(t,r,y)\leq\overline{\tilde{u}}_c(t,r,y)
\end{dcases}~\text{ for all }
(t,r,y)\in\mathbb{R}\times\mathbb{R}\times\mathbb{T}^{N-1}_{\tau^\prime}.
$$
Furthermore, from our construction of the lower and upper bounds in \eqref{bound-rational-c}-\eqref{bound-rational-c*},
it is straightforward to see that
$$
\lim_{r\to+\infty}\tilde{u}^\infty(t,r,y)=\bm{0}~~\text{ and }~~
  \liminf_{r\to-\infty}\tilde{u}^\infty(t,r,y)\gg\bm{0}
$$
locally uniformly in $r\in\mathbb{R}$ and uniformly in $y\in\mathbb{T}^{N-1}_{\tau^\prime}$.

Eventually, we conclude from the above three steps that for each $c\geq c^*(\zeta)$, the function $\tilde{u}_c:=\tilde{u}^\infty(t,r,y)$ is the desired solution to \eqref{rational-direction-problem}.
This ends the proof of Proposition \ref{rational-direction-existence}.
\end{proof}
\begin{proof}[Proof of Claim \ref{invariance}]
We only deal with the case $c>c^*(\zeta)$.
The case $c=c^*(\zeta)$ can be handled similarly.
Moreover, it is straightforward to derive the upper estimates for the solution $\tilde{u}$ of problem \eqref{fixed-point-problem} because $\widehat{\eta}\bm{1}$ and $\tilde{h}$ are two global supersolutions to \eqref{new-var-system}.

Let us now turn to the proof of the lower estimates.
Due to $\tilde{u}(0,r,y)\geq\underline{\tilde{u}}^a(0,r,y)\not\equiv\bm{0}$,
the strong maximum principle and Hopf lemma together with assumption \ref{item:irreducibility} yields
\begin{equation}\label{left-stable-conclusion1}
\tilde{u}(t,r,y)\gg\bm{0}~~~\text{for all $t>0$ and }
(r,y)\in[a+ct,\infty)\times\mathbb{T}^{N-1}_{\tau^\prime}.
\end{equation}
Since $\tilde{f}$ and all coefficients of $\mathcal{L}$ in \eqref{fixed-point-problem} are periodic in $(r,y)$ as stated in \eqref{coef-periodicity-rational},
we infer from Lemma \ref{subsol-construction-larger-speeds} that
\begin{align*}
\partial_{t}\tilde{u}+\mathcal{L}\tilde{u}-\tilde{f}(r,y,\tilde{u})\geq
\partial_{t}\tilde{v}^l+\mathcal{L}\tilde{v}^l-\tilde{f}(r,y,\tilde{v}^l)
~~\text{ in }\{t>0\}\cap\Omega^l_+,
\end{align*}
where one has set (remember that $r=x\cdot\zeta$ by \eqref{directional-var})
\begin{equation}\label{left-stable-v-tilde-def}
\begin{split}
&\tilde{v}^l(t,r,y):=\tilde{\omega}(t,r+l\tau_1,y)~~\text{ for each $l\in\{0,1,\ldots,\lfloor-a/\tau_1\rfloor\}$, and}\\
&\Omega^l_+:=\Big\{(t,r,y)\in\mathbb{R}\times\mathbb{R}\times\mathbb{T}^{N-1}_{\tau^\prime}
\,\big|\,\max_{1\leq i\leq d}\tilde{v}_i^l(t,r,y)>0\Big\}.
\end{split}
\end{equation}
Notice that the map $l\mapsto\tilde{v}^l$ is decreasing.
Due to $a<0$ and $l\leq\lfloor-a/\tau_1\rfloor$, one also has
\begin{equation}\label{Omega-a}
\Omega^l_+\subset\Omega^a:=\left\{(t,r,y)\in\mathbb{R}\times\mathbb{R}\times
\mathbb{T}^{N-1}_{\tau^\prime}\mid r-ct\geq a\right\},
\end{equation}
which implies that
\begin{equation}\label{left-stable-conclusion2}
\tilde{v}^l<\bm{0}~~\text{ in }\Omega^a\setminus\Omega^l_+.
\end{equation}

On the other hand, the above construction \eqref{left-stable-v-tilde-def} implies that
$\tilde{v}^l\leq\bm{0}$ on $\partial\Omega^l_+$.
Combining \eqref{left-stable-conclusion1} with \eqref{Omega-a},
it then follows that
$\tilde{u}\gg\tilde{v}^l$ on $\{t>0\}\cap\partial\Omega^l_+$.
Recalling the choice of initial data in \eqref{fixed-point-problem}, we know that for all $(r,y)\in\mathbb{R}\times\mathbb{T}^{N-1}_{\tau^\prime}$ and for each $l\in\mathbb{N}$,
$(\tilde{u}-\tilde{v}^l)(0,r,y)\geq(\underline{\tilde{u}}^a-\tilde{v}^l)(0,r,y)>\bm{0}$, where $\underline{\tilde{u}}^a$ is defined by \eqref{lower-barrier-finite-c}.
To summarize, we have
\begin{equation}\label{left-stable-problem1}
\begin{dcases}
\partial_{t}(\tilde{u}_i-\tilde{v}^l_i)+\mathcal{L}^i(\tilde{u}_i-\tilde{v}^l_i)
\geq\sum_{j=1}^{d}\tilde{h}^l_{ij}(\tilde{u}_j-\tilde{v}^l_j)
~\text{ in }\{t>0\}\cap\Omega^l_+,\\
\tilde{u}_i-\tilde{v}^l_i>0~\text{ on }\{t>0\}\cap\partial\Omega^l_+,
~~~~i=1,\ldots,d,\\
\forall(r,y)\in\mathbb{R}\times\mathbb{T}^{N-1}_{\tau^\prime},~~
(\tilde{u}_i-\tilde{v}^l_i)(0,r,y)\geq(\neq)\,0,~~~i=1,\ldots,d,
\end{dcases}
\end{equation}
where the operator $\mathcal{L}^i$ is defined by \eqref{nondivergence-operator-new-var} and one has set
$$
\tilde{h}^l_{ij}(t,r,y):=\int_{0}^{1}\frac{\partial \tilde{f}_{i}}{\partial\tilde{u}_j}
(r,y,\tilde{v}^l_1+s\tilde{u}_1,\ldots,\tilde{v}^l_d+s\tilde{u}_d)\mathrm{d}s,
~~l\in\mathbb{N},~i,j=1,\ldots,d.
$$
Note that the functions $\tilde{h}^l_{ij}$ are bounded.
Following
\cite[page 189 in Chap.3]{protter2012maximum} and
\cite[Lemma 2.2]{berestycki2008asymptotic}, we next prove that
\begin{equation}\label{left-stable-conclusion3}
\forall(t,r,y)\in\{t>0\}\cap\Omega^l_+,~~~
\tilde{u}(t,r,y)\gg\tilde{v}^l(t,r,y)~\text{ for each }l\in\mathbb{N}.
\end{equation}

To do so, fix $\epsilon>0$ and define
\begin{equation}\label{w-epsilon-def}
w^\epsilon(t,r,y)=\tilde{u}(t,r,y)-\tilde{v}^l(t,r,y)+\epsilon\bm{1}e^{Mt},~~
(t,r,y)\in\mathbb{R}_+\times\mathbb{R}\times\mathbb{T}^{N-1}_{\tau^\prime},
\end{equation}
where the constant $M>0$ is chosen such that
$$
M-\sum_{j=1}^{d}\tilde{h}^l_{ij}(t,r,y)>0~\text{ for all $i=1,\ldots,d$ and $l\in\mathbb{N}$}.
$$
From \eqref{left-stable-problem1} and \eqref{w-epsilon-def}, each component of $w^\epsilon$ then satisfies
\begin{equation}\label{w-epsilon-inequ}
\begin{dcases}
\partial_{t}w^\epsilon_i+\mathcal{L}^iw^\epsilon_i-\sum_{j=1}^{d}
\tilde{h}^l_{ij}w^\epsilon_i>0~~\text{ in }\{t>0\}\cap\Omega^l_+,\\
w^\epsilon_i\geq\epsilon~
~\text{ on }\partial(\{t>0\}\cap\Omega^l_+),~~i=1,\ldots,d.
\end{dcases}
\end{equation}
We now claim that $w^\epsilon\gg\bm{0}$ in $\{t>0\}\cap\Omega^l_+$ for any $\epsilon>0$.
To prove this we argue by contradiction.
Set $\Omega_T:=\{0<t\leq T\}\cap\Omega^l_+$ for $T>0$.
Suppose there exist $\epsilon>0$, $(t,r,y)\in\overline{\Omega_T}$ and, without loss of generality, $i_0\in\{1,\ldots,d\}$ such that $w_{i_0}^\epsilon(t,r,y)=0$.
Notice first that, by \eqref{subsol-form-larger-speeds}, we can easily check that the function $\tilde{v}^l$ defined in \eqref{left-stable-v-tilde-def} satisfies
$\max_{1\leq i\leq d}\tilde{v}_i^l(t,r,\cdot)\to-\infty$ as $r\to-\infty$ locally uniformly for $(t,l)\in\mathbb{R}\times\mathbb{Z}$,
and $\max_{1\leq i\leq d}\tilde{v}_i^l(t,r,\cdot)\to0$ as $r\to+\infty$ locally uniformly in $t$ and uniformly in $l$.
From \eqref{left-stable-conclusion1} and \eqref{w-epsilon-def}, we find that the minimum of $w_{i_0}^\epsilon$ over $\overline{\Omega_T}$ cannot be attained at $|r|=\infty$.
Consequently, there exists $(t_0,r_0,y_0)\in\overline{\Omega_T}$ such that
\begin{equation}\label{w-epsilon-minimum-point}
w_{i_0}^\epsilon(t_0,r_0,y_0)=\min_{\overline{\Omega_T}}w_{i_0}^\epsilon=0~
\text{ and }~w^\epsilon_j\geq0\text{ for }j\neq i_0.
\end{equation}
Observe from \eqref{w-epsilon-inequ} that one necessarily has $(t_0,r_0,y_0)\in\Omega_T$.
Therefore, $\nabla_{(r,y)}w^\epsilon_{i_0}(t_0,r_0,y_0)=\bm{0}$ and
$D^2_{(r,y)}w^\epsilon_{i_0}(t_0,r_0,y_0)$ is nonnegative definite.
Using the uniform ellipticity and symmetry involved in the operator $\mathcal{L}^{i_0}$, one can further deduce that
$$
\left(\mathcal{L}^{i_0}w^\epsilon_{i_0}\right)(t_0,r_0,y_0)\leq0.
$$
Combining \eqref{w-epsilon-inequ}, \eqref{w-epsilon-minimum-point}
and $\tilde{h}_{ij}\geq0$ for $i\neq j$, we obtain
$$
\partial_tw^\epsilon_{i_0}(t_0,r_0,y_0)>0.
$$
However, the definition of the minimum in \eqref{w-epsilon-minimum-point} yields that for all $t\in[0,t_0]$, if $(t,r_0,y_0)\in\Omega^l_+$, then $w^\epsilon_{i_0}(t,r_0,y_0)\geq w^\epsilon_{i_0}(t_0,r_0,y_0)$ by continuity.
Since $t_0>0$, for $\sigma>0$ sufficiently small, one has $(t_0-\sigma,r_0,y_0)\in\{t>0\}\cap\Omega^l_+$.
Therefore, it is possible to differentiate the inequality to obtain $\partial_tw^\epsilon_{i_0}(t_0,r_0,y_0)\leq0$.
This leads to a contradiction.

As a result, we obtain
$$
\forall\,T>0,~\forall\epsilon>0,~~~
w^\epsilon=(\tilde{u}-\tilde{v}^l)+\epsilon\bm{1}e^{Mt}\gg\bm{0}~
\text{ in $\Omega_T$},
$$
whence it also holds in $\{t>0\}\cap\Omega^l_+$.
Letting $\epsilon\to0$, then $\tilde{u}-\tilde{v}^l\geq\bm{0}$ in $\{t>0\}\cap\Omega^l_+$.
Furthermore, the inequality is strict, that is, $\tilde{u}\gg\tilde{v}^l$ in $\{t>0\}\cap\Omega^l_+$.
Indeed, suppose not, then there exist $(t^\prime,r^\prime,y^\prime)\in\{t\geq0\}\cap\overline{\Omega_+}$ and  $i^\prime\in\{1,\ldots,d\}$ such that
$(\tilde{u}-\tilde{v}^l)_{i^\prime}(t^\prime,r^\prime,y^\prime)=0$.
Applying the strong maximum principle and Hopf lemma
\cite[Theorem 7 in Chap.3]{protter2012maximum} to the $i^\prime$-th differential inequality of \eqref{left-stable-problem1}, one gets $(\tilde{u}-\tilde{v})_{i^\prime}=0$ in the connected component of $\{0<t\leq t^\prime\}\cap\Omega_+$ containing $(t^\prime,r^\prime,y^\prime)$.
Since the matrix $(\tilde{h}_{ij})_{1\leq i,j\leq d}$ is fully coupled,
one also has $(\tilde{u}-\tilde{v})_j=0$ for all $j\in\{1,\ldots,d\}$
over the same domain and, by continuity, up to the boundary as well.
Recalling \eqref{left-stable-conclusion1} and the definition of $\Omega_+$ in \eqref{left-stable-v-tilde-def}, a contradiction has been reached.

Finally, we conclude from \eqref{left-stable-conclusion1}-\eqref{left-stable-conclusion2} and \eqref{left-stable-conclusion3} that
$$
\forall t\geq0,~\forall(r,y)\in[a+ct,\infty)\times\mathbb{T}^{N-1}_{\tau^\prime},~~
\tilde{u}(t,r,y)\geq\max\left(\bm{0},
\max_{0\leq n\leq\lfloor-a/\tau_1\rfloor}\tilde{\omega}(t,r+n\tau_1,y)\right).
$$
Claim \ref{invariance} then follows.
\end{proof}
\subsection{Completion of the proof of Theorem \ref{main-result-existence}}
In this subsection, we return to the original coordinate system with the aim of proving the existence of pulsating waves propagating in any given direction.

Proposition \ref{rational-direction-existence} equivalently says that for each $c\geq c^*(\zeta)$, system \eqref{monotone-system} admits a pulsating travelling wave $u=u(t,x;\zeta)$ propagating in the direction $\zeta\in\mathbb{Q}^N\cap\mathbb{S}^{N-1}$ with speed $c$.
Moreover, from the proof of Proposition \ref{rational-direction-existence}, one also has,
for all $(t,x)\in\mathbb{R}\times\mathbb{R}^{N}$,
\begin{align}
 \max\left(\bm{0},\sup_{n\in\mathbb{N}}\omega(t,x+n\tau_{1}\zeta)\right)\leq u(t,x;\zeta)\leq \min\big(\widehat{\eta}\bm{1},h(t,x)\big)~~~~&\text{ for $c>c^*(\zeta)$},
 \label{estimate-rational-c}\\
 \max\left(\bm{0},\sup_{n\in\mathbb{N}}\omega^*(t,x+n\tau_{1}\zeta)\right)\leq u^*(t,x;\zeta)\leq \min\big(\widehat{\eta}\bm{1},\widehat{\gamma}h^*(t,x)\big)~&\text{ for $c=c^*(\zeta)$},
 \label{estimate-rational-c*}
\end{align}
where $h$, $\omega$, $\omega^*$ and $h^*$ are defined by \eqref{supersol-form-any-speed}, \eqref{subsol-form-larger-speeds}, \eqref{subsol-form-critical} and \eqref{supersol-form-critical},
and the parameter $\widehat{\gamma}$ in \eqref{estimate-rational-c*} is the same as in \eqref{bound-rational-c*}.
Recall that the asymptotics of the waves at infinity
were directly obtained from \eqref{estimate-rational-c}-\eqref{estimate-rational-c*}.
However, such lower bounds no longer hold for a general direction $e\in\mathbb{S}^{N-1}$ (more on this later).
Furthermore, since $f$ is not assumed to be subhomogeneous in Theorem \ref{main-result-existence}, the monotonicity results derived in Proposition \ref{monotonocity-proof} may not be valid.

On this account, we prove a persistence property for the solution of the Cauchy problem associated with the monotone system \eqref{monotone-system} before proving the existence of pulsating waves propagating in any given direction.
This kind of property is known as the \emph{hair-trigger effect}.
\begin{proposition}[Asymptotic persistence]\label{local-uniform-persistence}
Let $\psi=(\psi_1,\ldots,\psi_d)^T\in BUC(\mathbb{R}^N)^d$
be such that $\psi\geq\bm{0}$ and $\psi_i\not\equiv0$ for some $i$.
Denote by $u=u(t,x;\psi)$ the solution to \eqref{monotone-system} with the initial datum $u(0,x)\equiv\psi(x)$.
Let $\lambda_1$ be the generalized principal eigenvalue defined by \eqref{generalized-PE}.

Assume $\lambda_1<0$.
Then there exists $\epsilon>0$ independent of $\psi$ such that
$$
\liminf_{t\to+\infty}u(t,x;\psi)\geq\epsilon\bm{1}~~
\text{locally uniformly for $x\in\mathbb{R}^N$}.
$$
\end{proposition}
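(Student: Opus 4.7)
The plan is to build a compactly supported stationary subsolution from the Dirichlet principal eigenpair on a large ball, exploit the cooperative structure to run a monotone iteration starting from it, and close the argument by a compactness and renormalization step that makes the resulting positive asymptotic lower bound independent of the initial datum $\psi$.

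First I would fix $R>0$ large enough that $\lambda_1^R<0$, which is possible by Proposition~\ref{generalized-principal-eigenvalue-properties} since $\lambda_1^R\searrow\lambda_1<0$ as $R\to+\infty$, and let $\varphi^R$ denote the associated Dirichlet principal eigenfunction on $B_R$. A Taylor expansion at $\bm{0}$ using Assumption~\ref{item:smoothness} yields, componentwise in $B_R$,
\[
L(\delta\varphi^R)-f(x,\delta\varphi^R)=\delta\lambda_1^R\varphi^R+O(\delta^{1+\beta}),
\]
which is $\leq\bm{0}$ for every $\delta\in(0,\delta^*]$, with $\delta^*$ depending only on $R$ and $f$. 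Extending $\delta\varphi^R$ by $\bm{0}$ outside $B_R$ then produces a stationary subsolution of \eqref{monotone-system} on all of $\mathbb{R}^N$ (the corner on $\partial B_R$ is oriented correctly, since $\partial_\nu\varphi^R<0$ there).

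Next, for any nontrivial $\psi$, the parabolic strong maximum principle combined with the full coupling of $D_uf(x,\bm{0})$ (Assumption~\ref{item:irreducibility}) yields $u(t_0,\cdot;\psi)\gg\bm{0}$ everywhere as soon as $t_0>0$. Given any $x_0\in\mathbb{Z}^N$, the $\mathbb{Z}^N$-periodicity of the coefficients makes $\varphi^R(\cdot-x_0)$ the Dirichlet principal eigenfunction on $B_R(x_0)$, so one can choose $\delta=\delta(\psi,x_0)\in(0,\delta^*]$ with $u(t_0,\cdot;\psi)\geq\delta\varphi^R(\cdot-x_0)$ in $\mathbb{R}^N$. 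Denote by $v(s,\cdot)$ the solution of the Cauchy problem started from this translated subsolution: since $\delta\varphi^R(\cdot-x_0)$ is a subsolution and $\widehat{\eta}\bm{1}$ a supersolution, the monotonicity of the semiflow gives $v(s,\cdot)\nearrow V$ locally uniformly as $s\to+\infty$, where $V$ is a bounded stationary solution of $LV=f(x,V)$ with $V\geq\delta\varphi^R(\cdot-x_0)\not\equiv\bm{0}$, hence $V\gg\bm{0}$ in $\mathbb{R}^N$ by the elliptic strong maximum principle. Comparison then yields $\liminf_{t\to+\infty}u(t,x;\psi)\geq V(x)$ locally uniformly.

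The main obstacle is to show that $V(x_0)\geq\epsilon\bm{1}$ for some $\epsilon>0$ depending neither on $\psi$ nor on $x_0$, so that the pointwise bound upgrades to a genuine locally uniform lower bound (any compact is contained in a bounded union of $\mathbb{Z}^N$-translates of $B_R$). I would argue by contradiction: if there existed sequences $\delta_n\downarrow 0$ and $x_n\in\mathbb{Z}^N$ for which the associated stationary solutions $V^n$ satisfied $\min_i V_i^n(x_n)\to 0$, then, setting $\widetilde V^n(\cdot):=V^n(\cdot+x_n)$ and using the $\mathbb{Z}^N$-periodicity of $A^i$, $q^i$ and $f$ (so that $\widetilde V^n$ solves the same stationary problem), elliptic regularity would produce a subsequential limit $\widetilde V^\infty$ in $C^2_{\mathrm{loc}}(\mathbb{R}^N)^d$, nonnegative, stationary, and vanishing in some component at the origin. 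If $\widetilde V^\infty\not\equiv\bm{0}$, the strong maximum principle and full coupling force $\widetilde V^\infty\gg\bm{0}$, a contradiction. Otherwise $\widetilde V^n\to\bm{0}$ locally uniformly, and I would renormalize $W^n:=\widetilde V^n/\|\widetilde V^n\|_{L^\infty}$, further translating along $\mathbb{Z}^N$ if necessary so that the sup is essentially attained in a bounded set, pass to the limit in $LW^n=f(\cdot,\widetilde V^n)/\|\widetilde V^n\|_{L^\infty}\to D_uf(\cdot,\bm{0})W^n$, and obtain a bounded, nontrivial $W^\infty\geq\bm{0}$ solving $(L-D_uf(x,\bm{0}))W^\infty=\bm{0}$ in $\mathbb{R}^N$. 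Strong max gives $W^\infty\gg\bm{0}$, and the characterization \eqref{generalized-PE} of $\lambda_1$ forces $\lambda_1\geq 0$, contradicting the hypothesis. The delicate point is this translation-plus-renormalization step, whose success relies crucially on the periodicity of the coefficients to preserve the structure of the limiting equation.
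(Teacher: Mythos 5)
Your construction of the compactly supported stationary subsolution $\delta\varphi^R$ (extended by $\bm{0}$ outside $B_R$, with the corner on $\partial B_R$ oriented correctly), the use of the strong maximum principle to fit it under $u(t_0,\cdot;\psi)$, and the monotone convergence $v(s,\cdot)\nearrow V$ to a positive stationary state all match the paper's proof. The divergence — and the gap — is in how you make the lower bound independent of $\psi$ and of the translate $x_0$.

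Your contradiction argument breaks in the second case. If $\widetilde V^n\to\bm{0}$ only \emph{locally} uniformly, the normalization $W^n=\widetilde V^n/\|\widetilde V^n\|_{L^\infty}$ does not produce the linearized equation: the remainder in $f(x,\widetilde V^n)/\|\widetilde V^n\|_{L^\infty}=D_uf(x,\bm{0})W^n+O\bigl(|\widetilde V^n|^{1+\beta}\bigr)/\|\widetilde V^n\|_{L^\infty}$ is negligible only where $\widetilde V^n$ itself is small compared to nothing — one needs $|\widetilde V^n|^{\beta}$ small \emph{at the points where $W^n$ stays of order one}. But you then translate to where the sup is "essentially attained", i.e., precisely to points where $\widetilde V^n$ is of size $\|\widetilde V^n\|_{L^\infty}$, which need not tend to $0$; there the Taylor expansion of $f$ at $\bm{0}$ is unavailable and the limit solves the nonlinear stationary equation, yielding no contradiction with $\lambda_1<0$. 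The step is repairable — e.g.\ renormalize by the \emph{local} value $m_n:=\max_i\widetilde V^n_i(0)$ and invoke the elliptic Harnack inequality of Theorem \ref{elliptic-Har-ineq} (which also collapses your first case, since $\min_iV^n_i(x_n)\to0$ already forces $\widetilde V^n\to\bm{0}$ locally uniformly); then $W^n$ is locally bounded, the remainder is $O(m_n^{\beta})$ on compacts, and the limit $W^\infty\gg\bm{0}$ plugged into \eqref{generalized-PE} (which requires no boundedness of the test function) gives $\lambda_1\ge0$. The paper avoids all of this with a direct sliding argument: it fixes once and for all a $\psi$-independent $\kappa_0$ such that $\kappa\varphi^R$ is a subsolution of the stationary problem for every $\kappa\in(0,\kappa_0]$, and shows that the increasing limit $p^\kappa$ must satisfy $p^\kappa\ge\kappa_0\varphi^R$ in $B_R$ by setting $\gamma^*=\sup\{\gamma>0\mid p^\kappa>\gamma\varphi^R\text{ in }\overline{B_R}\}$ and ruling out $\gamma^*<\kappa_0$ via the strong elliptic maximum principle and the boundary values on $\partial B_R$. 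That argument delivers the uniform $\epsilon$ with no compactness, and you should either adopt it or carry out the Harnack-based repair explicitly.
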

\begin{proof}
Applying Proposition \ref{generalized-principal-eigenvalue-properties} to problem \eqref{Dirichlet-PE} yields $\lambda^R_{1}\to\lambda_1$ as $R\to+\infty$.
Therefore, under the assumption $\lambda_1<0$, there exists $R_0>0$ large enough such that
$$
\forall R\geq R_0,~~~\lambda^R_{1}<0.
$$
In the sequel, fix some $R>R_0$ and
let $\varphi^R$ be the corresponding principal eigenfunction of \eqref{Dirichlet-PE}, normalized such that $\|\varphi^R\|_{L^\infty(B_R)^d}=1$.
Recall from our assumption \ref{item:smoothness} that $f(x,\bm{0})\equiv\bm{0}$ and
$f\in C^1(\mathbb{T}^N\times B_\infty(\bm{0},\sigma))^d$ with $\sigma>0$.
Thus, one can choose $\kappa_0\in(0,\sigma]$ small enough such that
\begin{equation}\label{C1-regularity-kappa}
\forall(x,\kappa)\in\mathbb{T}^N\times(0,\kappa_0],~~~
\left|D_uf(x,\bm{0})\kappa\bm{1}-f(x,\kappa\bm{1})\right|_\infty
\leq-\kappa\lambda^R_1.
\end{equation}
Due to $\bm{0}\ll\varphi^R\leq\bm{1}$, we then have that
\begin{align}\label{sub-stationary-equ}
\forall\kappa\in(0,\kappa_0],~~L(\kappa\varphi^R)-f(x,\kappa\varphi^R)
=D_uf(x,\bm{0})\kappa\varphi^R+\lambda^R_1\kappa\varphi^R
-f(x,\kappa\varphi^R)\leq\bm{0}~\text{ in }B_R,
\end{align}
where $L$ denotes a diagonal matrix of the operators given by \eqref{nondivergence}.

On the other hand, since the initial datum $\psi$ is assumed to be nonnegative and not identically equal to $\bm{0}$, we infer from the strong maximum principle coupled with assumption \ref{item:irreducibility} that
$u(1,x;\psi)\gg\bm{0}$ for all $x\in\mathbb{R}^N$.
Consequently, possibly by further decreasing $\kappa_0$ in \eqref{sub-stationary-equ},
$u(1,x;\psi)\gg\kappa_0\varphi^R(x)$ for all $x\in\overline{B_R}$.
For any $\kappa\in(0,\kappa_0]$, let $\psi^\kappa$ be the function defined by
$$
\psi^\kappa(x)=
\begin{dcases}
\kappa\varphi^R(x)&\text{ if }x\in B_R,\\
\bm{0}&\text{ otherwise}.
\end{dcases}
$$
From \eqref{sub-stationary-equ}, the function $\psi^\kappa$ then becomes a generalized subsolution of the stationary equation associated with \eqref{monotone-system} in $\mathbb{R}^N$.
Moreover, one has $u(1,x;\psi)\geq\psi^\kappa(x)$ for all $x\in\mathbb{R}^N$.
Consider now the solution $u=\underline{u}(t,x;\psi^\kappa)$ to \eqref{monotone-system} with the initial datum $\psi^\kappa$.
By the parabolic comparison principle, we obtain that
\begin{equation}\label{sub-Cauchy-problem}
\forall(t,x)\in(0,\infty)\times\mathbb{R}^N,~~
\bm{0}\ll\underline{u}(t,x;\psi^\kappa)\leq u(t+1,x;\psi),
\end{equation}
and that $\underline{u}$ is nondecreasing in $t$.
The classical parabolic estimates then imply that $\underline{u}$ converges in $C^2_{\rm loc}(\mathbb{R}^N)^d$ as $t\to+\infty$ to a bounded stationary solution to \eqref{monotone-system}, denoted as $p^\kappa$ satisfying
$\bm{0}\leq p^\kappa(x)\leq\widehat{\eta}\bm{1}$ for all $x\in\mathbb{R}^N$,
where $\widehat{\eta}$ is given in assumption \ref{item:upper-bound}.
Furthermore, due to
$$
p^\kappa(0)\geq\underline{u}(0,0;\psi^\kappa)=\kappa\varphi^R(0)\gg\bm{0},
$$
it follows from the strong maximum principle that $p^\kappa\gg\bm{0}$ in $\mathbb{R}^N$ for any $\kappa\in(0,\kappa_0]$,
and thus $\min_{x\in\overline{B_R}}p^\kappa(x)\gg\bm{0}$.
Our next aim is to prove that
\begin{equation}\label{sub-inital-datum}
p^\kappa(x)\geq\kappa_0\varphi^R(x)~~\text{ in }B_R.
\end{equation}
Note that one can define
$$
\gamma^*:=\sup\left\{\gamma>0\mid p^\kappa>\gamma\varphi^R
\,\text{ in }\overline{B_R}\right\}.
$$
We claim that $\gamma^*\geq\kappa_0$.
Indeed, if $\gamma^*<\kappa_0$, then there exist $x_0\in\overline{B_R}$ and $i_0\in\{1,\ldots,d\}$ such that
$$
\gamma^*\varphi^R_{i_0}(x_0)=p^\kappa_{i_0}(x_0)~\text{ and }~
\gamma^*\varphi^R(x)\leq p^\kappa(x)~~\text{in $\overline{B_R}$}.
$$
One necessarily has $x_0\in B_R$ since $\varphi^R\equiv\bm{0}$ on $\partial B_R$ while $\min_{\overline{B_R}}p^\kappa\gg\bm{0}$.
Observe also that due to $\gamma^*<\kappa_0$ and \eqref{sub-stationary-equ},
$\gamma^*\varphi^R$ is still a subsolution of the stationary equations associated with \eqref{monotone-system} in $B_R$.
From the strong elliptic maximum principle coupled with assumption \ref{item:irreducibility},
we deduce that $\gamma^*\varphi^R\equiv p^\kappa$ in $\overline{B_R}$,
which contradicts the values of both on $\partial B_R$.
We thereby conclude from \eqref{sub-Cauchy-problem} and \eqref{sub-inital-datum} that
$$
\liminf_{t\to+\infty}u(t,x;\psi)\geq\kappa_0\varphi^R(x)
~~\text{ in $\overline{B_{R/2}}$}.
$$

Now, recall that $R$ was chosen sufficiently large. Then, $[0,1]^N\subset B_{R/2}$.
Furthermore, since all coefficients of the operator $L$ and $f(\cdot,u)$ are $\mathbb{Z}^N$-periodic, it follows that for all $l\in\mathbb{Z}^N$ and any $\kappa\in(0,\kappa_0]$, $\underline{u}(t,x-l;\psi^\kappa(\cdot-l))$ satisfies the same system of equations as $\underline{u}(t,x;\psi^\kappa)$.
Note also that the Dirichlet principal eigenvalue $\lambda_1^R$ is invariant under periodic translations of the coefficients.
With the same arguments as above, we can obtain that
$$
\forall l\in\mathbb{Z}^N,~~~\liminf_{t\to+\infty}u(t,x;\psi)\geq\kappa_0\varphi^R(x-l)
~\text{ in }\overline{B_{R/2}(l)},
$$
where $B_{R}(l)$ denotes the ball of radius $R$ centered at $l\in\mathbb{Z}^N$.
Finally, since $\inf_{B_R}\varphi^R\gg\bm{0}$ for all $R>0$, then there exists a constant $\epsilon>0$ independent of $\psi$ such that
$$
\forall x\in\mathbb{R}^N,~~~\liminf_{t\to+\infty}u(t,x;\psi)\geq\epsilon\bm{1},
$$
which concludes the proof of Proposition \ref{local-uniform-persistence}.
\end{proof}
\begin{remark}
Recall from Remark \ref{sufficient-condition} that in general only $\lambda^p_1<\lambda_1$ holds,
where both quantities are given by \eqref{periodic-threshold} and \eqref{generalized-PE}, respectively.
It is worthy pointing out that in the case of $\lambda^p_1<0\leq\lambda_1$,
the persistence property relies strongly on the kind of additional conditions to be imposed on the initial data.
We refer to \cite{girardin2024persistence,nadin2010existence} for more details in a space-time periodic setting.
In particular, if the initial datum is periodic, a proof similar to the one above yields the uniform persistence of solutions associated with \eqref{monotone-system}.
\end{remark}

Following \cite{deng2023existence,SI-system} for the rational approximation to any direction of propagation, we are now in a position to complete the:
\begin{proof}
[Proof of Theorem \ref{main-result-existence}]
Fix any $e\in\mathbb{S}^{N-1}$.
Since $\mathbb{Q}^{N}\cap\mathbb{S}^{N-1}$ is a dense subset of $\mathbb{S}^{N-1}$ (see \cite[Lemma 4.1]{ding2021admissible}), there exists a sequence $\{\zeta^m\}_{m\in\mathbb{N}}\subset\mathbb{Q}^{N}\cap\mathbb{S}^{N-1}$ such that $\zeta^m\to e$ as $m\to+\infty$.
Moreover, for each given $c_e>c^*(e)$, we can choose a sequence $\{c^m\}_{m\in\mathbb{N}}\subset(c^*(\zeta^m),\infty)$ such that $c^m\to c_e$ as $m\to+\infty$ since the map $\mathbb{S}^{N-1}\ni e\mapsto c^*(e)$ is continuous.
Consider now the sequence of pulsating waves propagating in the direction $\zeta^m$ with speed $c^m$ for \eqref{monotone-system}:
$$
c^m>c^*(\zeta^m),~~u^m(t,x):=u(t,x;\zeta^m),~~~m\in\mathbb{N}.
$$
For each $m\in\mathbb{N}$, the vector-valued function $u^m$ satisfies
$$
u^m\left(t+\frac{k\cdot\zeta^m}{c^m},x\right)=u^m(t,x-k)
~\text{ for all }(t,x,k)\in\mathbb{R}\times\mathbb{R}^N\times\mathbb{Z}^N,
$$
and by \eqref{estimate-rational-c}, there holds
\begin{equation}\label{estimate-rational-sequence}
 \max\left(\bm{0},\sup_{n\in\mathbb{N}}\omega(t,x+n\tau^m_{1}\zeta^m;\zeta^m)\right)
 \leq u^m(t,x)\leq \min(\widehat{\eta}\bm{1},h(t,x;\zeta^m)),
\end{equation}
where the parameter $\tau^m_{1}$ is given by \eqref{tau1} with $\zeta=\zeta^m$.
From standard parabolic estimates, $\{u^m\}$ converges (up to extracting a subsequence) in
$C_{\rm loc}^{1,2}(\mathbb{R}\times\mathbb{R}^N)^d$  as $m\to+\infty$ to a bounded entire solution $u^\infty$ to \eqref{monotone-system}, which is nonnegative and verifies
\begin{equation}\label{general-diredtion-pulsating}
\forall(t,x,k)\in\mathbb{R}\times\mathbb{R}^N\times\mathbb{Z}^N,~~
u^\infty\left(t+\frac{k\cdot e}{c_e},x\right)=u^\infty(t,x).
\end{equation}
From statement \ref{item:continuity} of Proposition \ref{periodic-principal-eigenvalue-properties} and Proposition \ref{minimal-speed},
we find that the functions $h$ and $\omega$ given by \eqref{supersol-form-any-speed} and \eqref{subsol-form-larger-speeds} are continuous in both $e\in\mathbb{S}^{N-1}$ and $c\in(c^*(e),\infty)$ with respect to the uniform topology.
Note also that $u^\infty$ is bounded and $c_e>c^*(e)$.
We then infer from \eqref{estimate-rational-sequence} and the periodicity of equations that
\begin{equation}\label{general-diredtion-estimate}
\forall(t,x)\in\mathbb{R}\times\mathbb{R}^N,~~
\max(\bm{0},\omega(t,x;e))\leq u^\infty(t,x)\leq
\min(\widehat{\eta}\bm{1},h(t,x;e)),
\end{equation}
where
\begin{align*}
&h(t,x;e)=e^{-\lambda_{c_e}(e)(x\cdot e-c_et)}\phi_{\lambda_{c_e}(e)}(x),\\
&\omega(t,x;e)=h(t,x;e)-Ke^{-(\lambda_{c_e}(e)+\delta)
(x\cdot e-{c_e}t)}\phi_{\lambda_{c_e}(e)+\delta}(x),
\end{align*}
with $\lambda_{c_e}(e)>0$ given by \eqref{decay-rate}.
Here only such a lower bound stated in \eqref{general-diredtion-estimate} holds since $\tau^m_1\to0$ as $m\to+\infty$ by the definition of $\tau_1$ in \eqref{tau1}.
From the strong maximum principle coupled with assumption \ref{item:irreducibility}, one further gets $u^\infty\gg\bm{0}$.

It remains to verify the limiting condition \eqref{limiting-condition}.
Using the pulsating property of $u^\infty$ in \eqref{general-diredtion-pulsating} and the positivity of speed $c_e$ (remember that $c^*(e)>0$ under assumption $\lambda_1<0$),
it is equivalent to prove that
$$
\lim_{t\to-\infty}u^{\infty}(t,x)=\bm{0}~\text{ and }~
\liminf_{t\to+\infty}u^{\infty}(t,x)\gg\bm{0}
$$
locally uniformly for $x\in\mathbb{R}^N$.
The former follows immediately from \eqref{general-diredtion-estimate} and the fact that the function $h$ is componentwise decreasing in $t$.
To prove the latter limit, we consider a solution $\underline{u}:\mathbb{R}_+\times\mathbb{R}^N\to\mathbb{R}^d$ to \eqref{monotone-system} starting from $\underline{u}(0,x)\equiv\max(\bm{0},\omega(0,x;e))$.
Since $u^\infty$ is a solution to \eqref{monotone-system}, the parabolic comparison principle then ensures that
$$
\forall(t,x)\in\mathbb{R}_+\times\mathbb{R}^N,~~~
\underline{u}(t,x)\leq u^\infty(t,x)\leq\widehat{\eta}\bm{1}.
$$
Applying Proposition \ref{local-uniform-persistence} to $\underline{u}$, we obtain that there exists $T>0$ large enough such that
$$
\forall t\geq T,~\forall R>0,~~\inf_{|x|\leq R}u^\infty(t,x)\geq
\inf_{|x|\leq R}\underline{u}(t,x)\geq\epsilon\bm{1},
$$
whence the latter limit follows.
Thus, the function $u(t,x;e)\equiv u^\infty(t,x)$ is the desired pulsating travelling wave for \eqref{monotone-system} propagating along direction $e$ with speed $c_e$.

For the critical case $c_e=c^*(e)$, let us set
$$
c^m:=c^*(\zeta^m),~~u^{m}(t,x):=u^*(t,x;\zeta^m),~~~m\in\mathbb{N}.
$$
Passing to the limit $m\to+\infty$ as before, we can obtain the existence of a bounded entire solution of \eqref{monotone-system} verifying pulsating conditions as in \eqref{general-diredtion-pulsating} with $c_e=c^*(e)$,
and similarly, by \eqref{estimate-rational-c*} and Lemma \ref{supersol-construction-critical}, there also holds
$$
\forall(t,x)\in\mathbb{R}\times\mathbb{R}^N,~~
\max(\bm{0},\omega^*(t,x;e))\leq u^*(t,x;e)\leq
\min(\widehat{\eta}\bm{1},h^*(t,x;e)),
$$
where $\omega^*$ and $h^*$ are given by \eqref{subsol-form-critical} and \eqref{supersol-form-critical}.
The strong maximum principle coupled with assumption \ref{item:irreducibility} further yields $u^*(t,x;e)\gg\bm{0}$ in $\mathbb{R}\times\mathbb{R}^N$.
Furthermore, by Lemma \ref{supersol-construction-critical} and $c^*(e)>0$, it is straightforward to check that
$u^*(t,x)\to\bm{0}$ as $t\to-\infty$ locally uniformly for $x\in\mathbb{R}^N$.
With the same argument as before, by Proposition \ref{local-uniform-persistence}, we can also prove the other limit of $u^*$, that is,
$\liminf_{t\to+\infty}u^*(t,x;e)\gg\bm{0}$  locally uniformly for $x\in\mathbb{R}^N$.

This ends the proof of Theorem \ref{main-result-existence}.
\end{proof}
\section{Nonexistence of pulsating waves}\label{sect:nonexistence}
This section is devoted to the proof of Theorem \ref{main-result-nonexistence}.
The non-existence results will rest on the following proposition.
\begin{proposition}
[Extinction]\label{extinction}
Let $\psi=(\psi_1,\ldots,\psi_d)^T\in BUC(\mathbb{R}^N)^d$
be such that $\psi\geq\bm{0}$.
Denote by $u=u(t,x;\psi)$ the solution to \eqref{monotone-system} with the initial datum $u(0,x)\equiv\psi(x)$.
Let $\lambda_1^p$ be the periodic principal eigenvalue given by \eqref{periodic-threshold}.

Assume that \eqref{strict-sublinear-condition} holds.
If $\lambda_1^p\geq0$, then
$$
\lim_{t\to+\infty}u(t,x;\psi)=\bm{0}~~
\text{uniformly for $x\in\mathbb{R}^N$}.
$$
\end{proposition}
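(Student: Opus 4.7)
The plan is to control $u(t,x;\psi)$ by a decreasing multiple of the periodic principal eigenfunction $\phi_0$ associated with $\lambda_1^p$ (i.e. $\phi_{\lambda e}$ from \eqref{periodic-PE} evaluated at $\lambda=0$), and to prove that this multiple decays to $0$ using a translation/limit argument combined with the strong maximum principle and the \emph{strict} sublinearity \eqref{strict-sublinear-condition}.

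\textbf{Step 1: Initial boundedness.} Observe that for any $M>0$ the function $\bar{u}(x):=M\phi_0(x)$ satisfies, using $L\phi_0=\lambda_1^p\phi_0+D_uf(x,\bm{0})\phi_0$,
\begin{equation*}
L\bar{u}-f(x,\bar{u})=M\lambda_1^p\phi_0+\bigl[D_uf(x,\bm{0})(M\phi_0)-f(x,M\phi_0)\bigr]\geq\bm{0},
\end{equation*}
since $\lambda_1^p\geq0$ and $\eqref{strict-sublinear-condition}\Rightarrow D_uf(x,\bm{0})u\geq f(x,u)$ for $u\geq\bm{0}$. Hence $M\phi_0$ is a stationary supersolution. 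Since $\phi_0\gg\bm{0}$ on $\mathbb{T}^N$ and $\psi$ is bounded, one can pick $M_0$ large enough so that $\psi\leq M_0\phi_0$ on $\mathbb{R}^N$, and the parabolic comparison principle (valid for cooperative systems by \ref{item:monotonicity}) yields $u(t,x;\psi)\leq M_0\phi_0(x)$ for all $t\geq0$, $x\in\mathbb{R}^N$.

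\textbf{Step 2: The optimal decay constant.} Define, for $t\geq0$,
\begin{equation*}
M^*(t):=\sup_{s\geq t,\,x\in\mathbb{R}^N,\,1\leq i\leq d}\frac{u_i(s,x;\psi)}{\phi_{0,i}(x)},\qquad M^\infty:=\lim_{t\to+\infty}M^*(t).
\end{equation*}
Since $\phi_0$ is bounded away from $\bm{0}$ on $\mathbb{T}^N$ and $u(t,\cdot;\psi)$ is bounded by Step~1, $M^*(t)\in[0,M_0]$ is nonincreasing, so $M^\infty\in[0,M_0]$ exists. The goal is to show $M^\infty=0$, which directly yields $u(t,\cdot;\psi)\to\bm{0}$ uniformly.

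\textbf{Step 3: The contradiction argument.} Suppose by contradiction $M^\infty>0$. Pick sequences $t_n\to+\infty$, $x_n=k_n+y_n$ with $k_n\in\mathbb{Z}^N$, $y_n\in[0,1]^N$, and $i_n\in\{1,\dots,d\}$ realizing $u_{i_n}(t_n,x_n;\psi)/\phi_{0,i_n}(x_n)\to M^\infty$. Up to subsequences, $y_n\to y_\infty$ and $i_n\equiv i_0$. Using the $\mathbb{Z}^N$-periodicity of $L$ and $f$, the translates $\tilde{v}_n(t,x):=u(t+t_n,x+k_n;\psi)$ solve \eqref{monotone-system}; by standard interior parabolic estimates they converge (along a subsequence) in $C^{1,2}_{\mathrm{loc}}(\mathbb{R}\times\mathbb{R}^N)^d$ to an entire solution $\tilde{v}_\infty$ satisfying, by periodicity of $\phi_0$ and monotonicity of $M^*$,
\begin{equation*}
\tilde{v}_\infty(t,x)\leq M^\infty\phi_0(x)\quad\text{on }\mathbb{R}\times\mathbb{R}^N,\qquad \tilde{v}_{\infty,i_0}(0,y_\infty)=M^\infty\phi_{0,i_0}(y_\infty).
\end{equation*}
Set $w:=M^\infty\phi_0-\tilde{v}_\infty\geq\bm{0}$. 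A direct computation using $L\phi_0=\lambda_1^p\phi_0+D_uf(x,\bm{0})\phi_0$ gives
\begin{equation*}
\partial_t w+Lw-D_uf(x,\bm{0})w=M^\infty\lambda_1^p\phi_0+\bigl[D_uf(x,\bm{0})\tilde{v}_\infty-f(x,\tilde{v}_\infty)\bigr]\geq\bm{0},
\end{equation*}
the right-hand side being nonnegative thanks to $\lambda_1^p\geq0$ and \eqref{sublinearity}. Since $D_uf(x,\bm{0})$ is cooperative, $w_{i_0}$ satisfies a scalar parabolic differential inequality with bounded coefficients, and $w_{i_0}(0,y_\infty)=0$ is its minimum. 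The strong maximum principle forces $w_{i_0}\equiv0$ on $(-\infty,0]\times\mathbb{R}^N$; assumption \ref{item:irreducibility} (full coupling of $D_uf(x,\bm{0})$) propagates this to $w\equiv\bm{0}$ on the same set. Thus $\tilde{v}_\infty(t,x)\equiv M^\infty\phi_0(x)$ for $t\leq0$, and substituting into \eqref{monotone-system} yields
\begin{equation*}
M^\infty\lambda_1^p\phi_0+D_uf(x,\bm{0})(M^\infty\phi_0)=f(x,M^\infty\phi_0)\quad\text{in }\mathbb{R}^N.
\end{equation*}
Since $M^\infty\phi_0\gg\bm{0}$, the strict inequality \eqref{strict-sublinear-condition} gives $f(x,M^\infty\phi_0)<D_uf(x,\bm{0})(M^\infty\phi_0)$, hence $M^\infty\lambda_1^p\phi_0<\bm{0}$, contradicting $M^\infty>0$, $\phi_0\gg\bm{0}$ and $\lambda_1^p\geq0$. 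Therefore $M^\infty=0$, which concludes the proof.

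\textbf{Main difficulty.} The subtle point is the \emph{vector-valued} contact analysis: the zero of $w$ occurs only in a single component $w_{i_0}$, and one has to use the cooperativity of $D_uf(x,\bm{0})$ to apply the scalar strong maximum principle to $w_{i_0}$, and then the full coupling hypothesis \ref{item:irreducibility} to propagate the vanishing to all components before invoking the strict sublinearity. The fact that $\tilde{v}_\infty$ is an entire solution (thanks to $t_n\to+\infty$) is crucial, since it provides a past on which the strong maximum principle can be run.
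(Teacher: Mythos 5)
Your proof is correct, but it takes a genuinely different route from the one in the paper. The paper first dominates $u(\cdot,\cdot;\psi)$ by the solution $\overline{u}$ issued from the constant initial datum $\max(\widehat{\eta},\|\psi\|_\infty)\bm{1}$; this $\overline{u}$ is nonincreasing in time and $\mathbb{Z}^N$-periodic in $x$, hence converges to a periodic \emph{stationary} solution $U$, and the whole argument is then reduced to the elliptic problem: a sliding argument with $\gamma^*:=\inf\{\gamma>0\mid\gamma\phi_0\gg U\}$, the elliptic strong maximum principle and full coupling force $\gamma^*\phi_0\equiv U$ if $\gamma^*>0$, which \eqref{strict-sublinear-condition} excludes. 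You instead dominate $u$ directly by $M_0\phi_0$ (observing that every multiple of $\phi_0$ is a stationary supersolution when $\lambda_1^p\geq0$), track the optimal multiplier $M^*(t)$, and run a parabolic translation--compactness argument producing an entire solution $\tilde{v}_\infty\leq M^\infty\phi_0$ with an interior contact point; the parabolic strong maximum principle plus full coupling then force $\tilde{v}_\infty\equiv M^\infty\phi_0$ on $\{t\leq0\}$, and \eqref{strict-sublinear-condition} gives the contradiction. What each approach buys: the paper's reduction to a stationary limit makes the final contact argument purely elliptic and avoids extracting an entire solution, but it relies on the constant $\max(\widehat{\eta},\|\psi\|_\infty)\bm{1}$ being a supersolution; your version bypasses that and never leaves the eigenfunction $\phi_0$, at the price of the (standard but heavier) entire-solution extraction and of needing $t_n\to+\infty$ so that the limit has an infinite past. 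Two cosmetic remarks: you invoke \eqref{sublinearity}, which is not literally among the hypotheses of the proposition, but it follows from \eqref{strict-sublinear-condition} by continuity on the closure of $(0,\infty)^d$ (the paper's own proof uses it in exactly the same way); and the propagation from $w_{i_0}\equiv0$ to $w\equiv\bm{0}$ via full coupling deserves the usual partition-and-iterate sentence, which the paper also elides.
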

\begin{proof}
Consider the solution $\overline{u}=\overline{u}(t,x)$ of \eqref{monotone-system} with the initial datum given by
$$
\overline{u}(0,x)\equiv\bm{1}\max(\widehat{\eta},\|\psi\|_\infty),
$$
where the constant $\widehat{\eta}$ is given in \ref{item:upper-bound}.
It then follows from the comparison principle that
\begin{equation}\label{super-Cauchy-problem}
\forall(t,x)\in\mathbb{R}_+\times\mathbb{R}^N,~~
\bm{0}\leq u(t,x;\psi)\leq\overline{u}(t,x),
\end{equation}
and that $\overline{u}$ is nonincreasing in $t$.
Moreover, for all $t>0$, $\overline{u}(t,x)$ is periodic in $x$
because $\overline{u}(0,x)$ is constant.
As a consequence, the classical parabolic estimates imply that $\overline{u}$ converges as $t\to+\infty$ in $C^2(\mathbb{R}^N)^d$ to a $\mathbb{Z}^N$-periodic stationary solution to \eqref{monotone-system}, denoted as $U$,  that is,
\begin{equation}\label{stationary-U}
\begin{dcases}
LU(x)=f\left(x,U(x)\right)~~\text{for $x\in\mathbb{R}^N$},\\
\forall(x,k)\in\mathbb{R}^N\times\mathbb{Z}^N,~~U(x+k)=U(x),\\
\forall x\in\mathbb{R}^N,~~\bm{0}\leq U(x)\leq\overline{u}(0,x),
\end{dcases}
\end{equation}
where $L$ denotes a diagonal matrix of the operators given by \eqref{nondivergence}.

Next, we prove that $U=\bm{0}$.
To do so, let $\phi_0\in C^2(\mathbb{T}^N)^d$ denote the periodic principal eigenfunction of \eqref{periodic-PE} associated with $\lambda_1^p$.
Set
$$
\gamma^*:=\inf\left\{\gamma>0\mid\gamma\phi_0\gg U\text{ in }\mathbb{T}^N\right\}\geq0.
$$
This quantity is well-defined since $U$ is bounded and $\phi_0\gg\bm{0}$.
We claim that $\gamma^*=0$.
Suppose $\gamma^*>0$.
Then, $\gamma^*\phi_0\geq U$ and there exists $\{x_n\}_{n\in\mathbb{N}}\subset\mathbb{T}^N$ such that, for at least one index $i\in\{1,\ldots,d\}$, $(\gamma^*\phi_0-U)_i(x_n)\to0$ as $n\to\infty$.
By the compactness of $\mathbb{T}^N$, one may assume that $x_n\to x_\infty$ as $n\to\infty$, whence there holds $(\gamma^*\phi_0-U)_i(x_\infty)=0$ by continuity.
As $\lambda_1^p\geq0$, we infer from \eqref{sublinearity} and  \eqref{stationary-U} that
$$
L(\gamma^*\phi_0-U)-D_uf(x,\bm{0})(\gamma^*\phi_0-U)\geq
\lambda_1^p\gamma^*\phi_0\geq\bm{0}~~
\text{in }\mathbb{T}^N.
$$
Hence the strong maximum principle coupled with assumption \ref{item:irreducibility} yields $\gamma^*\phi_0\equiv U$.
However,
since $\phi_0\gg\bm{0}$ and $\gamma^*>0$, and by assumption \eqref{strict-sublinear-condition}, one has
$$
L(\gamma^*\phi_0)-f(x,\gamma^*\phi_0)=
\left(D_uf(x,\bm{0})+\lambda_1^p\right)
\gamma^*\phi_0-f(x,\gamma^*\phi_0)\geq(\not\equiv)\,\bm{0}~~
\text{in }\mathbb{T}^N.
$$
Recalling \eqref{stationary-U}, a contradiction has been achieved.
One gets $\gamma^*=0$ and thereby $U=\bm{0}$.

As a consequence, we infer from \eqref{super-Cauchy-problem} that
$$
\bm{0}\leq\lim_{t\to+\infty}u(t,x;\psi)\leq\lim_{t\to+\infty}\overline{u}(t,x)=\bm{0}
~~\text{ uniformly for }x\in\mathbb{R}^N.
$$
The proof of Proposition \ref{extinction} is complete.
\end{proof}
\begin{proof}[Proof of Theorem \ref{main-result-nonexistence}]
  Assume that $u=u(t,x)$ is a pulsating travelling wave of \eqref{monotone-system} according to Definition \ref{pulsating-wave-def}.
  Consider the solution $u=u(t,x;\widehat{\eta}\bm{1})$ to \eqref{monotone-system} starting from $u(0,x;\widehat{\eta}\bm{1})\equiv\widehat{\eta}\bm{1}$.
  Since $u(t,x)$ is bounded from above by $\widehat{\eta}\bm{1}$, the parabolic comparison principle ensures that for each $t\in\mathbb{R}$ and $s\in\mathbb{R}$, there holds
$$
\forall t\geq s,~\forall x\in\mathbb{R}^N,~~
\bm{0}\leq u(t,x)\leq u(t-s,x;\widehat{\eta}\bm{1}).
$$
By Proposition \ref{extinction}, $u(t-s,x;\widehat{\eta}\bm{1})$ converges to $\bm{0}$ as $s\to-\infty$ locally uniformly for $t\in\mathbb{R}$ and uniformly for $x\in\mathbb{R}^N$,
which implies that $u(t,x)\equiv\bm{0}$. This leads to a contradiction.
\end{proof}

\section*{Data availability statement}
\noindent
No new data were created or analysed in this study.

\section*{Acknowledgements}
\noindent
L. Deng's research is supported by the China Postdoctoral Science Foundation
(No. 2025M773053) and in part by the NSF of China (No.12471203).
He would like to thank the last two authors for generous hospitality during his visit to the
Universit\'{e} Le Havre Normandie,
where this work was initiated.
He would also like to express his sincere gratitude to Professors Xuefeng Wang and Xing Liang
for supporting his postdoctoral position.

\noindent
The authors are very grateful to the referees for their valuable comments and suggestions which helped to improve our original manuscript, as well as for pointing out some inspiring problems for further study.

\section*{Appendix}
\appendix
\section{Proof of Proposition
\ref{periodic-principal-eigenvalue-properties}}
\label{appendix:strict-concavity}
Statement \ref{item:existence-uniqueness} follows from the Krein-Rutman theory.
The \emph{max-min} characterization in \ref{item:max-min} is quite similar to the one by Sweers \cite[Remark 3.1]{sweers1992strong} (see also \cite{birindelli1999existence}).
The analyticity of $\lambda\mapsto k(\lambda,e)$ ($\phi_{\lambda e}$ as well) is  standard owing to Kato's perturbation theory (see \cite[Section 7.2]{Kato-book}).
Estimate \eqref{upper-estimate-principal-eigenvalue} is easy to derive by virtue of formula \eqref{max-min}.
Statement \ref{item:continuity} is a direct consequence of the foregoing properties.
Let us now prove the rest of the results.
\begin{proof}
[Proof of Proposition
\ref{periodic-principal-eigenvalue-properties} \ref{item:concavity}]
Only the concavity remains to be proved.
Following
\cite[Lemma 3.1]{berestycki2005analysis-2},
let us first show that for any given $e\in\mathbb{S}^{N-1}$ the function $\lambda\mapsto k(\lambda,e)$ is concave in $\mathbb{R}$.
One has to check that
\begin{equation}\label{k-concavity}
\forall\hat{\lambda}, \tilde{\lambda}\in\mathbb{R},~\forall r\in(0,1),~~ k\big(r\hat{\lambda}+(1-r)\tilde{\lambda},e\big)
\geq
rk(\hat{\lambda},e)+(1-r)k(\tilde{\lambda},e).
\end{equation}
Let $E_\lambda$ be the set defined by
\begin{equation}\label{E-lambda-def}
E_\lambda=\left\{\psi\in C^2(\mathbb{R}^N)^d\mid\psi\gg\bm{0}\text{ and }
\psi e^{\lambda e\cdot x}\text{ is $\mathbb{Z}^N$-periodic}\right\}.
\end{equation}
Then, formula \eqref{max-min} can be rewritten as
\begin{equation}\label{k-formula}
  k(\lambda,e)=\max_{\psi\in E_\lambda}
    \min_{1\leq i\leq d}\inf_{x\in\mathbb{T}^N}
    \frac{\big(e^i(L-H)\psi\big)(x)}{\psi_i(x)},
\end{equation}
where $L={\rm diag}(L^1,\ldots,L^d)$ denotes the operator given by \eqref{nondivergence}.
Let now $\hat{\psi}\in E_{\hat{\lambda}}$ and $\tilde{\psi}\in E_{\tilde{\lambda}}$ be  arbitrarily chosen, respectively.
Define
\begin{align*}
\hat{z}=(\hat{z}_1,\ldots,\hat{z}_d)^T=
\big(\ln\hat{\psi}_1,\ldots,\ln\hat{\psi}_d\big)^T~\text{ and }~
\tilde{z}=(\tilde{z}_1,\ldots,\tilde{z}_d)^T=
\big(\ln\tilde{\psi}_1,\ldots,\ln\tilde{\psi}_d\big)^T.
\end{align*}
Set $z:=r\hat{z}+(1-r)\tilde{z}$, $\lambda:=r\hat{\lambda}+(1-r)\tilde{\lambda}$, and $\psi(x):=(e^{z_1(x)},\ldots,e^{z_d(x)})^T$.
One can easily verify that $\psi\in E_\lambda$.
Hence, it can be used as a test function in \eqref{k-formula} so that
$$
k(\lambda,e)\geq\min_{1\leq i\leq d}\inf_{x\in\mathbb{T}^N}
\frac{\big(e^i(L-H)\psi\big)(x)}{\psi_i(x)}.
$$
Direct computations show that for each $i\in\{1,\ldots,d\}$,
\begin{align*}
&\frac{L^i\psi_i}{\psi_i}
=\frac{1}{\psi_i}\left[-\tr(A^iD^2\psi_i)+q^i\cdot\nabla\psi_i\right]
=-\tr(A^iD^2z_i)-\nabla z_iA^i\nabla z_i+q^i\cdot\nabla z_i,\\
&\frac{e^iB\psi}{\psi_i}=\frac{1}{\psi_i}\sum^d_{j=1}b_{ij}\psi_j
=\sum^d_{j=1}b_{ij}e^{r(\hat{z}_j-\hat{z}_i)+(1-r)(\tilde{z}_j-\tilde{z}_i)}.
\end{align*}
Moreover, due to $0<r<1$ and by \eqref{elliptic-condition}, one has
\begin{equation}\label{scalling}
\begin{split}
\nabla z_iA^i\nabla z_i
=&r\nabla\hat{z}_iA^i\nabla\hat{z}_i
-r(1-r)(\nabla\hat{z}_i-\nabla\tilde{z}_i)A^i(\nabla\hat{z}_i-\nabla\tilde{z}_i)
+(1-r)\nabla\tilde{z}_iA^i\nabla\tilde{z}_i\\
\leq& r\nabla\hat{z}_iA^i\nabla\hat{z}_i+
(1-r)\nabla\tilde{z}_iA^i\nabla\tilde{z}_i,
\end{split}
\end{equation}
and also
\begin{align*}
e^{r(\hat{z}_j-\hat{z}_i)+(1-r)(\tilde{z}_j-\tilde{z}_i)}
\leq re^{\hat{z}_j-\hat{z}_i}+(1-r)e^{\tilde{z}_j-\tilde{z}_i}
\end{align*}
since the function $x\mapsto e^x$ is convex.
As a consequence, we obtain
\begin{align*}
\frac{e^i(L-H)\psi}{\psi_i}
\geq&r\left(-\tr\left(A^i D^2\hat{z}_i\right)-\nabla\hat{z}_iA^i\nabla\hat{z}_i
+q^i\cdot\nabla\hat{z}_i-\sum^d_{j=1}h_{ij}e^{\hat{z}_j-\hat{z}_i}\right)\\
&+(1-r)\left(-\tr\left(A^iD^2\tilde{z}_i\right)-\nabla\tilde{z}_iA^i\nabla\tilde{z}_i
+q^i\cdot\nabla\tilde{z}_i-\sum^d_{j=1}h_{ij}e^{\tilde{z}_j-\tilde{z}_i}\right)
\\
\geq&r\left(\frac{-\tr\big(A^iD^2\hat{\psi}_i\big)
+q^i\cdot\nabla\hat{\psi}_i}{\hat{\psi}_i}
-\sum^d_{j=1}h_{ij}\frac{\hat{\psi}_j}{\hat{\psi}_i}\right)\\
&+(1-r)\left(\frac{-\tr\big(A^iD^2\tilde{\psi}_i\big)
+q^i\cdot\nabla\tilde{\psi}_i}{\tilde{\psi}_i}
-\sum^d_{j=1}h_{ij}\frac{\tilde{\psi}_j}{\tilde{\psi}_i}\right)
\\
=&r\frac{e^i(L-H)\hat{\psi}}{\hat{\psi}_i}+
(1-r)\frac{e^i(L-H)\tilde{\psi}}{\tilde{\psi}_i},
~~~~i=1,\ldots,d.
\end{align*}
This implies that
$$
k(\lambda,e)\geq\min_{1\leq i\leq d}\inf_{\mathbb{T}^N}
\frac{e^i(L-H)\psi}{\psi_i}
\geq r\min_{1\leq i\leq d}\inf_{\mathbb{T}^N}\frac{e^i(L-H)\hat{\psi}}{\hat{\psi}_i}+
(1-r)\min_{1\leq i\leq d}\inf_{\mathbb{T}^N}\frac{e^i(L-H)\tilde{\psi}}{\tilde{\psi}_i}.
$$
Eventually, since $\hat{\psi}$ and $\tilde{\psi}$ were arbitrarily chosen, \eqref{k-concavity} then follows.

Next, inspired by \cite[Proposition 2.2]{griette2021propagation}, we prove the strict concavity of $\lambda\mapsto k(\lambda,e)$.
Assume by contradiction that there exist two real numbers $\hat{\lambda}<\tilde{\lambda}$ such that \eqref{k-concavity} for $r=1/2$ (without loss of generality) is an equality.
Let $\phi_{\hat{\lambda}e}$ and $\phi_{\tilde{\lambda}e}$ be the unique principal eigenfunctions of \eqref{nonsymmetric-periodic-PE} in the sense of \eqref{normalization-principal-eigenfunction} corresponding to $\lambda=\hat{\lambda}$ and $\tilde{\lambda}$, respectively.
Set
\begin{equation}\label{strict-concavity-proof}
\hat{\psi}:=\phi_{\hat{\lambda}e}e^{-\hat{\lambda} e\cdot x}\in E_{\hat{\lambda}},~~~
\tilde{\psi}:=\phi_{\tilde{\lambda}e}e^{-\tilde{\lambda} e\cdot x}\in  E_{\tilde{\lambda}},
\end{equation}
and
\begin{equation}\label{test-function}
\phi=(\phi_1,\ldots,\phi_d)^T=
\left(\sqrt{(\phi_{\hat{\lambda}e})_1(\phi_{\tilde{\lambda}e})_1},
\ldots,\sqrt{(\phi_{\hat{\lambda}e})_d(\phi_{\tilde{\lambda}e})_d}\,\right)^T.
\end{equation}
Such a choice then yields
$$
\psi:=\phi e^{-\frac{\hat{\lambda}+\tilde{\lambda}}{2}e\cdot x}\in E_{(\hat{\lambda}+\tilde{\lambda})/2},
$$
where $E_{(\hat{\lambda}+\tilde{\lambda})/2}$ is given by \eqref{E-lambda-def}.
Thus,
$$
k\big(({\hat{\lambda}+\tilde{\lambda}})/{2},e\big)\geq
\min_{1\leq i\leq d}\inf_{\mathbb{T}^N}\frac{e^i(L-H)\psi}{\psi_i}\geq
\frac{1}{2}\big(k(\hat{\lambda},e)+k(\tilde{\lambda},e)\big).
$$
Combining our contradiction hypothesis with
formula \eqref{max-min}, one gets
\begin{align*}
\min_{1\leq i\leq d}\inf_{\mathbb{T}^N}
\frac{e^i\Big(L_{({\hat{\lambda}+\tilde{\lambda}})e/{2}}-H\Big)\phi}{\phi_i}
=
\frac{1}{2}\min_{1\leq i\leq d}\inf_{\mathbb{T}^N}
\frac{e^i(L_{\hat{\lambda}e}-H)\phi_{\hat{\lambda}e}}{(\phi_{\hat{\lambda}e})_i}+
\frac{1}{2}\min_{1\leq i\leq d}\inf_{\mathbb{T}^N}
\frac{e^i(L_{\tilde{\lambda}e}-H)\phi_{\tilde{\lambda}e}}{(\phi_{\tilde{\lambda}e})_i}.
\end{align*}
This means that the function $\phi$, defined by \eqref{test-function}, is a principal eigenfunction corresponding to $k((\hat{\lambda}+\tilde{\lambda})/2,e)$.
From our preceding computations, and in particular \eqref{scalling}, we then have that
${\nabla\hat{\psi}_i}/{\hat{\psi}_i}={\nabla\tilde{\psi}_i}/{\tilde{\psi}_i}$
for all $i\in\{1,\ldots,d\}$.
By \eqref{strict-concavity-proof}, we further obtain
$$
\frac{\nabla(\phi_{\hat{\lambda}e})_i}{(\phi_{\hat{\lambda}e})_i}-
\frac{\nabla(\phi_{\tilde{\lambda}e})_i}{(\phi_{\tilde{\lambda}e})_i}
=(\hat{\lambda}-\tilde{\lambda})e,~~i=1,\ldots,d.
$$
By integrating these equations over $\mathbb{T}^N$, one can deduce that $\hat{\lambda}=\tilde{\lambda}$.
This leads to a contradiction, thereby completing the proof.
\end{proof}
\begin{proof}[Proof of Proposition \ref{periodic-principal-eigenvalue-properties} \ref{item:maximum}]
The proof is inspired by \cite[Proposition 4.4]{berestycki2008asymptotic} (see also \cite[Theorem 2.12]{nadin2009principal}).
Firstly, by Proposition \ref{generalized-principal-eigenvalue-properties}, there  always exists a generalized principal eigenfunction $\varphi\in C^2(\mathbb{R}^N)^d$ associated with $\lambda_1$, that is,
\begin{equation}\label{generalized-principal-eigenfunction}
(L-H)\varphi=\lambda_1\varphi,~~~\varphi\gg\bm{0}~~\text{in $\mathbb{R}^N$}.
\end{equation}
Here $\varphi=(\varphi_1,\ldots,\varphi_d)^T$ is not necessarily bounded, nor uniformly positive (that is, it is possible that $\inf_{\mathbb{R}^N}\min_{1\leq i\leq d}\varphi_i=0$).
Moreover, if $\phi_{\lambda e}\in C^2(\mathbb{T}^N)^d$ is a principal eigenfunction of \eqref{nonsymmetric-periodic-PE}, then the function $\mathbb{R}^N\ni x\mapsto e^{\lambda e\cdot x}\phi_{\lambda e}$ satisfies \eqref{generalized-principal-eigenfunction}.
By \eqref{generalized-principal-eigenvalue}, one has $\lambda_1\geq k(\lambda,e)$  for all  $\lambda\in\mathbb{R}$ and $e\in\mathbb{S}^{N-1}$ and thus  $\lambda_1\geq\sup_{\lambda e\in\mathbb{R}^N}k(\lambda,e)$.
Note also that for any given $e\in\mathbb{S}^{N-1}$, $\lambda\mapsto k(\lambda,e)$ is strictly concave.
Therefore, to conclude it suffices to show that there exists $\alpha\in\mathbb{R}^N$ such that $\lambda_1=k(\alpha)$.

Let $e^1=(1,0,\ldots,0)^T\in\mathbb{R}^d$ and define a function $\psi=(\psi_1,\ldots,\psi_d)^T(x)$ for $x\in\mathbb{R}^N$ as
\begin{equation}\label{psi-def-shift}
\psi_i(x)=\frac{\varphi_i(x+e^1)}{\varphi_i(x)},~~i=1,\ldots,d.
\end{equation}
Applying the Harnack inequality from Theorem \ref{elliptic-Har-ineq} to \eqref{generalized-principal-eigenfunction},
we deduce that for any fixed $R>0$, there exists a constant $C_R>0$ such that for each $x\in\mathbb{R}^N$,
$$
\forall y\in B_{R+1}(x),~~~
\max_{1\leq i\leq d}\varphi_i(y)\leq C_R
 \min_{1\leq i\leq d}\varphi_i(x).
$$
We then infer from the periodicity of equations that the function $\psi$ defined by \eqref{psi-def-shift} is globally bounded.
Set
\begin{equation}\label{m-def}
M_1:=\sup_{x\in\mathbb{R}^N}\max_{1\leq i\leq d}\psi_i(x).
\end{equation}
Then there exist $\{x_n\}_{n\in\mathbb{N}}\subset\mathbb{R}^N$ and $i_0\in\{1,\ldots,d\}$ such that $\psi_{i_0}(x_n)\to M_1$ as $n\to\infty$.
For each $n\in\mathbb{N}$, let $\bar{x}_n\in[0,1]^N$ be such that $x_n-\bar{x}_n\in\mathbb{Z}^N$.
Up to extracting a subsequence,  $\bar{x}_n\to\bar{x}_\infty\in[0,1]^N$ as $n\to\infty$.
Now, we define two sequences $\{\widetilde{\varphi}^{\,n}\}_{n\in\mathbb{N}}$ and $\{\widehat{\varphi}^{\,n}\}_{n\in\mathbb{N}}$ of vector-valued functions as follows:
\begin{equation}\label{modified-two-shift-sequences}
\begin{split}
&\widetilde{\varphi}^{\,n}_i(x):=\frac{\varphi_i(x+x_n)}{\varphi_{i_0}(x_n)}
~~\text{ and}\\
&\widehat{\varphi}_i^{\,n}(x):=\frac{\varphi_i(x+e^1+x_n)-M_1\varphi_i(x+x_n)}
{\varphi_{i_0}(x_n)},~~i=1,\ldots,d.
\end{split}
\end{equation}
For each $n\in\mathbb{N}$, $\widetilde{\varphi}^{\,n}$ satisfies  \eqref{generalized-principal-eigenfunction} with a spatial shift of $\bar{x}_n$.
Applying the Harnack inequality again, we deduce that $\widetilde{\varphi}^{\,n}(x)$ is locally bounded in $\mathbb{R}^N$.
Indeed, due to $\widetilde{\varphi}^{\,n}_{i_0}(0)=1$, for any given $R>0$, there exists a constant $C_R>0$ such that
$$
\sup_{x\in B_{R}}\max_{1\leq i\leq d}\widetilde{\varphi}^{\,n}_i(x)\leq C_R
 \inf_{x\in B_{R}}\min_{1\leq i\leq d}\widetilde{\varphi}^{\,n}_i(x)\leq C_R,
 ~~~n\in\mathbb{N}.
$$
From Schauder interior estimates and Sobolev's injections, there exists a subsequence of $\{\widetilde{\varphi}^{\,n}\}$ that converges in
$C^{2+\alpha^\prime}(\overline{B_{R/2}})^d$ for all $\alpha^\prime\in[0,\alpha)$.
Since the convergence holds for any $R>0$, a diagonal argument allows us to extract a particular subsequence (not relabeled) such that $\widetilde{\varphi}^{\,n}$ converges in
$C_{\rm loc}^{2+\alpha^\prime}(\mathbb{R}^N)^d$ to a function $\widetilde{\varphi}^{\,\infty}$ satisfying
$$
\begin{dcases}
L[x+\bar{x}_\infty,\widetilde{\varphi}^{\,\infty}]-H(x+\bar{x}_\infty)
\widetilde{\varphi}^{\,\infty}=\lambda_1\widetilde{\varphi}^{\,\infty},\\
\widetilde{\varphi}^{\,\infty}\geq\bm{0}~~\text{in $\mathbb{R}^N$},~~~
\widetilde{\varphi}_{i_0}^{\,\infty}(0)=1.
\end{dcases}
$$
It then follows from the strong maximum principle, together with the fact that $H$ is cooperative and fully coupled, that $\widetilde{\varphi}^{\,\infty}(x)\gg\bm{0}$ for all $x\in\mathbb{R}^N$.

On the other hand, due to the periodicity of equations, the function $\widehat{\varphi}^{\,n}$ defined in \eqref{modified-two-shift-sequences} also satisfies \eqref{generalized-principal-eigenfunction} with a spatial shift of $\bar{x}_n$.
Moreover, it follows from \eqref{psi-def-shift} and \eqref{m-def} that $\widehat{\varphi}^{\,n}(x)\leq\bm{0}$ for all $x\in\mathbb{R}^N$ and
$\widehat{\varphi}^{\,n}_{i_0}(0)=\psi_{i_0}(x_n)-M_1$.
From standard elliptic estimates, we have that $\widehat{\varphi}^{\,n}(x)\to\widehat{\varphi}^{\,\infty}(x)$ as $n\to\infty$, after passing to a subsequence, locally uniformly for $x\in\mathbb{R}^N$, where $\widehat{\varphi}^{\,\infty}$ satisfies
$$
\begin{dcases}
L[x+\bar{x}_\infty,\widehat{\varphi}^{\,\infty}]-
H(x+\bar{x}_\infty)\widehat{\varphi}^{\,\infty}=
\lambda_1\widehat{\varphi}^{\,\infty},\\
\widehat{\varphi}^{\,\infty}\leq\bm{0}~~\text{in $\mathbb{R}^N$},~~
\widehat{\varphi}^{\,\infty}_{i_0}(0)=0.
\end{dcases}
$$
By the strong maximum principle, we deduce that $\widehat{\varphi}^{\,\infty}=\bm{0}$.
Recalling the definition of $\widehat{\varphi}^{\,n}$ in \eqref{modified-two-shift-sequences}, this implies in particular that
$$
\forall x\in\mathbb{R}^N,~~~
\widetilde{\varphi}^{\,\infty}(x+e^1)=
M_1\widetilde{\varphi}^{\,\infty}(x)\gg\bm{0}.
$$

As a consequence, if we set $\alpha_1:=\ln M_1$, then
$\phi_{\alpha_1}(x):=\widetilde{\varphi}^{\,\infty}(x)e^{\alpha_1x_1}$
is $1$-periodic in $x_1$.
Since $\widetilde{\varphi}^{\,\infty}$ is a generalized principal eigenfunction of the operator $L[\cdot+\bar{x}_\infty]-H(\cdot+\bar{x}_\infty)$ in $\mathbb{R}^N$ and $\widetilde{\varphi}_{i_0}^{\,\infty}(0)=1$, we then obtain
$$
\begin{dcases}
e^{\alpha_1x_1}L[x+\bar{x}_\infty,\phi_{\alpha_1}e^{-\alpha_1x_1}]=
H(x+\bar{x}_\infty)\phi_{\alpha_1}
+\lambda_1\phi_{\alpha_1},\\
\phi_{\alpha_1}\gg\bm{0}~\text{ in $\mathbb{R}^N$},~~~(\phi_{\alpha_1})_{i_0}(0)=1.
\end{dcases}
$$
Now let $e^2=(0,1,0,\ldots,0)^T\in\mathbb{R}^d$ and modify the definition \eqref{psi-def-shift} of $\psi$ as follows:
$$
\psi_i(x)=\frac{(\phi_{\alpha_1})_i(x+e^2)}{(\phi_{\alpha_1})_i(x)},~~~i=1,\ldots,d,
$$
which is also globally bounded by the Harnack inequalities once again.
Set
$$
M_2:=\sup_{x\in\mathbb{R}^N}\max_{1\leq i\leq d}\psi_i(x).
$$
Repeating the above proof, we obtain a function $\widetilde{\phi}^\infty_{\alpha_1}$ such that the following function
$$
\phi_{\alpha_1e^1+\alpha_2e^2}(x):=\widetilde{\phi}^\infty_{\alpha_1}(x)e^{\alpha_1x_1+\alpha_2x_2}
~~\text{ with $\alpha_2:=\ln M_2$}
$$
is $\mathbb{Z}^2$-periodic in $(x_1,x_2)$.
Going on the above construction until the $N$-th time, one can find $\alpha_i=\ln M_i$ for all $i\in\{3,\ldots,N\}$ and then get a function $\varphi$ verifying
$$
\begin{dcases}
L[x+\bar{z}_\infty,\varphi]-H(x+\bar{z}_\infty)\varphi=\lambda_1\varphi~~
\text{ for $(x,\bar{z}_\infty)\in\mathbb{R}^N\times\mathbb{R}^N$},\\
\varphi\gg\bm{0},~~\varphi_{i_0}(0)=1~\text{ and }
\varphi(x)e^{\alpha\cdot x}
\text{ is $\mathbb{Z}^N$-periodic in $x$},
\end{dcases}
$$
where one has set $\alpha:=(\alpha_1,\ldots,\alpha_N)^T\in\mathbb{R}^N$.
Define now $\phi_{\alpha}(x):=\varphi(x)e^{\alpha\cdot x}$ and recall that
$$
e^{\alpha\cdot x}L^i(\phi_{\alpha}e^{-\alpha\cdot x})=L^i_\alpha\phi_{\alpha}~~
\text{for }(\alpha,x)\in\mathbb{R}^N\times\mathbb{R}^N,
$$
where $L^i_\alpha$ denotes the operator given by \eqref{modified-operator} with $\lambda e=\alpha$.
Set $L_\alpha:={\rm diag}\left(L^1_{\alpha},\ldots,L^d_{\alpha}\right)$. Then,
$$
\begin{dcases}
L_\alpha[x+\bar{z}_\infty,\phi_{\alpha}]-
H(x+\bar{z}_\infty)\phi_{\alpha}=\lambda_1\phi_{\alpha}~~
\text{ for $(x,\bar{z}_\infty)\in\mathbb{R}^N\times\mathbb{R}^N$},\\
\forall(x,k)\in\mathbb{R}^N\times\mathbb{Z}^N,~~\phi_{\alpha}(x+k)=\phi_{\alpha}(x),~~
\phi_{\alpha}(x)\gg\bm{0}.
\end{dcases}
$$
Since the periodic principal eigenvalue of \eqref{periodic-principal-eigenvalue-properties} is invariant under a translation of the coefficients, it follows from the uniqueness of the principal eigenpairs that
$C\phi_{\alpha}$ for some constant $C>0$ is the principal eigenfunction of \eqref{periodic-principal-eigenvalue-properties}, and $\lambda_1=k(\alpha)$.
This concludes the proof.
\end{proof}

\section{Harnack-type estimates for linear cooperative and fully coupled systems}
\label{appendix:Harnack-inequa}
For the sake of convenience, we present the Harnack inequalities for linear cooperative and fully coupled systems in the context of the present work.
We refer to \cite[Theorem 1.2]{Chen1997Harnack}, \cite[Theorem 2.2]{Arapostathis1999Harnack} and \cite[Corollary 8.1]{Busca2004Harnack} for results on elliptic systems.
\begin{theorem}[Elliptic case]\label{elliptic-Har-ineq}
Let $L={\rm diag}(L^1,\ldots,L^d)$ denote a diagonal matrix of the linear elliptic operators given by \eqref{nondivergence}.
Let $H\in C^\alpha(\mathbb{T}^N,\mathcal{M}_{d}(\mathbb{R}))$ be cooperative and fully coupled in the sense of Definition \ref{cooperative-irreducible}.
  Consider the system of linear elliptic equations
  \begin{equation}\label{elliptic-Har-equation}
  (L-H)u=\bm{0}~~\text{in }\mathbb{R}^N.
  \end{equation}
Denote by $\bar{h}_0$ the smallest positive entry of $\overline{H}=(\overline{h}_{ij})_{1\leq i, j\leq d}$ where
$\overline{h}_{ij}:=\max_{x\in\mathbb{T}^N}h_{ij}(x)$.

 There exists a constant $C_R>0$ determined only by $N$, $d$, the coefficients of $L$ and $\bar{h}_0$ such that
  if
  $u=(u_1,\ldots,u_d)^T\in C(\overline{B_{R+1}})^d$
  is a nonnegative solution to \eqref{elliptic-Har-equation},
  then
  $$
  \max_{1\leq i\leq d}\sup_{x\in B_{R}}u_i(x)\leq C_R
  \min_{1\leq i\leq d}\inf_{x\in B_{R}}u_i(x).
  $$
\end{theorem}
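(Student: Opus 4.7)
The plan is to derive the Harnack inequality by applying scalar Krylov-Safonov-type estimates to each component of $u$ and then coupling the components via the fully coupled structure of $H$. Throughout, I write $B := B_R$, $B' := B_{R+1/2}$, $B'' := B_{R+1}$, and denote by $C$ a generic positive constant that may depend on $N$, $d$, the ellipticity and regularity data of $L$, on $\bar{h}_0$, $\|H\|_\infty$, and $R$.

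First I would use the cooperative sign condition to reduce to scalar inequalities. Since $h_{ij}\ge 0$ for $j\ne i$ and $u\ge\bm 0$, each component satisfies
$$(L^i - h_{ii})u_i \;=\; \sum_{j\neq i} h_{ij}\,u_j \;\geq\; 0 \quad\text{in } B'',$$
so $u_i$ is a nonnegative supersolution of a scalar uniformly elliptic nondivergence operator with bounded coefficients. The Krylov-Safonov weak Harnack inequality then yields an exponent $p=p(N,\underline\gamma,\overline\gamma)>0$ and a constant $C$ with
$$\|u_i\|_{L^p(B')} \;\leq\; C\,\inf_{B}u_i \qquad\text{for each }i. \qquad(\ast)$$
Reading the same equation as $(L^i-h_{ii})u_i=F_i$ with $0\le F_i\in L^\infty$, the scalar local maximum principle combined with $(\ast)$ yields
$$\sup_{B}u_i \;\leq\; C\Bigl(\inf_{B}u_i + \sum_{j\neq i}\sup_{B'}u_j\Bigr),$$
which still involves each $\sup u_j$ on the right and requires a coupling step to close.

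Second, I would use the fully coupled structure of $H$ to transfer estimates between components. Whenever $h_{ij}\not\equiv 0$, the definition of $\bar h_0$ together with the continuity and $\mathbb{Z}^N$-periodicity of $h_{ij}$ furnishes a set $E_{ij}\subset\mathbb T^N$ of positive measure on which $h_{ij}\geq\bar h_0/2$, whose $\mathbb Z^N$-translates tile $\mathbb R^N$ uniformly. Using the pointwise bound $L^i u_i \geq (\bar h_0/2)\,u_j$ on $(E_{ij}+\mathbb Z^N)\cap B''$ together with a barrier (or ABP) comparison inside $B''$, one obtains a quantitative coupling of the form
$$\inf_{B}u_j \;\leq\; C\,\inf_{B'}u_i.$$
By the fully coupled assumption (Definition \ref{cooperative-irreducible}), the directed graph on $\{1,\ldots,d\}$ with edges $\{(i,j):h_{ij}\not\equiv 0\}$ is strongly connected, so for every ordered pair $(i,j)$ there is a chain of length at most $d-1$ joining them. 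Iterating the coupling along such a chain and combining with the first step produces $\sup_{B}u_i \leq C\,\inf_{B}u_j$ for every pair $(i,j)$, which is the stated Harnack inequality.

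The main obstacle is the quantitative coupling step: turning the merely qualitative condition $h_{ij}\not\equiv 0$ into a constant depending only on $\bar h_0$, the ellipticity data, and $R$. The $\mathbb Z^N$-periodicity of $H$ is essential here, since it ensures that for any center of $B_R$ in $\mathbb R^N$, the set $\{h_{ij}\geq\bar h_0/2\}$ occupies a controlled fraction of $B''$, so that the barrier/ABP argument produces a constant independent of the location of $B_R$. The chain length being bounded by $d-1$ guarantees that the iteration preserves the required uniform dependence of the final constant $C_R$.
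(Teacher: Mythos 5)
First, a point of reference: the paper does not prove Theorem \ref{elliptic-Har-ineq} at all — it is quoted from the literature, with pointers to Chen--Zhao, Arapostathis et al.\ and Busca--Sirakov. Your sketch follows the standard analytic route of the last of these: scalar weak Harnack for each component as a supersolution, scalar local maximum principle for each component as a subsolution, a quantitative coupling step made uniform by the periodicity of $H$ and the constant $\bar h_0$, and a chain argument through the strongly connected graph of nonzero off-diagonal entries. That architecture is sound, and the coupling step is the right key idea (one technical care: do not drop the diagonal term — write $(L^i+h_{ii}^-)u_i\ge h_{ii}^+u_i+\sum_{j\ne i}h_{ij}u_j\ge(\bar h_0/2)u_j\chi_{E_{ij}+\mathbb Z^N}$ so that the zeroth-order coefficient has the sign needed for the comparison/ABP argument in $B_{R+1}$).

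The genuine gap is in how you close the estimate for the suprema. The inequality $\sup_{B}u_i\le C\bigl(\inf_{B}u_i+\sum_{j\ne i}\sup_{B'}u_j\bigr)$ cannot be combined with the inf-comparability produced by the coupling step: its right-hand side contains suprema over the strictly larger ball $B'$, iterating it only pushes those suprema onto ever larger balls so the recursion never terminates, and the coupling step controls infima, not suprema. To close the loop you must use the local maximum principle in its integral form, $\sup_{B}u_i\le C\bigl(\|u_i\|_{L^{p}(B')}+\|F_i\|_{L^{N}(B')}\bigr)$ with $F_i=\sum_{j\ne i}h_{ij}u_j$, then interpolate $\|u_j\|_{L^{N}}\le\|u_j\|_{L^{p}}^{\theta}\|u_j\|_{L^{\infty}}^{1-\theta}$, apply Young's inequality, sum over $i$, and absorb the resulting $\epsilon\sum_j\sup u_j$ term by the standard iteration lemma on nested balls. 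This yields $\sum_i\sup_{B}u_i\le C\sum_j\|u_j\|_{L^{p}(B')}\le C\max_j\inf_{B}u_j$ by your $(\ast)$, and only then does the chaining step (all the $\inf_{B}u_j$ are comparable, with constant $c^{d-1}$) deliver the stated inequality. As written, the assembly of your three ingredients does not produce the conclusion.
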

In view of parabolic systems, we refer to \cite[Theorem 3.9]{foldes2009cooperative}, which assumes \emph{pointwise irreducibility} that is slightly stronger than the fully coupled property.
Girardin and Mazari have recently revisited and refined such estimates in the space-time periodic setting, see \cite[Proposition 2.4]{girardin2023generalized} for details.
\begin{theorem}[Parabolic case]\label{Har-ineq}
Let $H(t,x)=(h_{ij}(t,x))_{1\leq i, j\leq d}$ be a cooperative and fully coupled matrix field according to Definition \ref{cooperative-irreducible}.
Assume that each entry $h_{ij}:\mathbb{R}\times\mathbb{R}^N\to\mathbb{R}$ is bounded and continuous.
  Consider the system of linear parabolic equations
  \begin{equation}\label{parabolic-Har-equation}
  \partial_{t}u_i-L^iu_i=\sum_{j=1}^{d}h_{ij}(t,x)u_j,
  ~~~(t,x)\in\mathbb{R}\times\mathbb{R}^N,~~i=1,\ldots,d,
  \end{equation}
where $L^i$ is defined by \eqref{nondivergence}.
Denote by $\bar{h}_0$ be the smallest positive entry of $\overline{H}=(\overline{h}_{ij})_{1\leq i, j\leq d}$ where
$\overline{h}_{ij}:=\max_{(t,x)\in\mathbb{R}\times\mathbb{R}^N}h_{ij}(t,x)$.

Let $\theta>1$ be given.  There exists a constant $\widehat\kappa_{\theta}>0$ determined only by $N$, $d$, the coefficients of $L^i$ and $\bar{h}_0$ such that
  if
  $u=(u_1,\ldots,u_d)^T\in C([-2\theta,2\theta]
  \times[-\theta,\theta]^N)^d$
  is a nonnegative solution to \eqref{parabolic-Har-equation},
  then
  $$
  \min_{1\leq i\leq d}\min_{(t,x)\in\left[\frac{3\theta}{2},2\theta\right]
  \times[-\theta,\theta]^N}u_i(t,x)\geq
  \widehat\kappa_{\theta}\max_{1\leq i\leq d}
  \max_{(t,x)\in[-\theta,0]\times[-\theta,\theta]^N}u_i(t,x).
  $$
\end{theorem}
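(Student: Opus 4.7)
The plan is to reduce the vector Harnack inequality to the scalar Krylov--Safonov parabolic Harnack inequality, combined with a finite iteration along the coupling graph of $H$. The key observation is that, since $H$ is cooperative and $u\geq\bm 0$, each component satisfies the scalar differential inequality
\begin{equation*}
\partial_t u_i - L^i u_i - h_{ii}(t,x)\,u_i \;=\; \sum_{j\neq i} h_{ij}(t,x)\,u_j \;\geq\; 0,
\end{equation*}
so $u_i$ is a nonnegative supersolution of a uniformly parabolic scalar equation whose zero-order coefficient $h_{ii}$ is bounded. The classical weak Harnack inequality therefore applies to each $u_i$ separately on any parabolic subcylinder, with constants controlled by $N$, the coefficients of $L^i$, and $\|h_{ii}\|_\infty$.

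The core of the argument is then to propagate lower bounds across components using full coupling. First, pick an index $i_0$ and a point $(t_0,x_0)$ in the source box $[-\theta,0]\times[-\theta,\theta]^N$ that attains the maximum $M$ on the right-hand side; apply the scalar Harnack inequality to $u_{i_0}$ on a small forward parabolic cylinder to deduce a lower bound of order $M$ on some subcylinder $Q_1$ strictly to the future of $(t_0,x_0)$. Next, because $H$ is fully coupled, there exists at least one $j_0\neq i_0$ with $h_{j_0 i_0}\not\equiv 0$; hence there is an open spacetime set on which $h_{j_0 i_0}\geq \bar h_0/2$. On that set the equation for $u_{j_0}$ produces a strictly positive source term $h_{j_0 i_0} u_{i_0}\geq c(\bar h_0)\,M$, and a standard barrier argument (or the scalar weak Harnack applied to a local supersolution of a scalar equation with nonnegative source) yields $u_{j_0}\geq c'(\bar h_0)\,M$ on a subcylinder $Q_2\subset Q_1$. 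Iterating this step, and using the fact that at every stage the partition of $\{1,\ldots,d\}$ into ``already controlled from below'' indices $I$ and the remainder $J$ has, by full coupling, some edge $h_{j_0 i_0}\not\equiv 0$ with $i_0\in I$, $j_0\in J$, after at most $d-1$ propagation steps we obtain a uniform lower bound $u_i\geq \kappa_1(\bar h_0,N,d,\text{coefficients})\,M$ for every $i$ on a common subcylinder $Q_\star$ located strictly before $t=3\theta/2$.

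To transport this bound to the target box $[3\theta/2,2\theta]\times[-\theta,\theta]^N$, apply the scalar Harnack inequality once more to each $u_i$ (a nonnegative supersolution) on a chain of overlapping parabolic cylinders of fixed size connecting $Q_\star$ to any point of the target box, each chain of finite length bounded by a function of $\theta$. This yields the claimed lower bound with a constant $\hat\kappa_\theta$ depending only on $N$, $d$, the coefficients of the $L^i$, and $\bar h_0$.

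The main technical obstacle will be the coupling step: quantifying how much positivity is lost (as a multiplicative factor) when one propagates from an already-controlled component $u_{i_0}$ to a new one $u_{j_0}$ through the source term $h_{j_0 i_0}u_{i_0}$. This requires constructing a suitable local barrier on a subcylinder where $h_{j_0 i_0}\geq \bar h_0/2$, and the constant produced must be uniform in $(t,x)$---this is exactly where the hypothesis $\bar h_0>0$ and the pointwise bound on the cooperative off-diagonal terms are needed, and where Chen--Zhao type estimates or Földes--Poláček's approach serve as a template. Geometric bookkeeping (choosing cylinder sizes consistent across all $d-1$ iteration steps so that each stays inside $[-2\theta,2\theta]\times[-\theta,\theta]^N$) is routine once $\theta>1$ is fixed, and produces the dependence of $\hat\kappa_\theta$ on $\theta$.
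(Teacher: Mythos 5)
The paper does not actually prove this theorem: it is stated as a known result, with the proof deferred to F\"oldes--Pol\'a\v{c}ek \cite[Theorem 3.9]{foldes2009cooperative} and Girardin--Mazari \cite[Proposition 2.4]{girardin2023generalized}, together with the remark that their arguments go through when $H$ is merely globally bounded. Your strategy (scalar Krylov--Safonov estimates componentwise, plus propagation of positivity along the coupling graph in at most $d-1$ steps, plus a Harnack chain in time) is essentially the strategy of those references, so you are on the right track in spirit.

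There is, however, a genuine gap at the heart of your coupling step. Full coupling, as defined in Definition \ref{cooperative-irreducible}, only gives you $h_{j_0 i_0}\not\equiv 0$ \emph{somewhere on} $\mathbb{R}\times\mathbb{R}^N$, and $\bar h_0$ is likewise defined through suprema over all of $\mathbb{R}\times\mathbb{R}^N$. Your sentence ``hence there is an open spacetime set on which $h_{j_0 i_0}\geq\bar h_0/2$'' is true, but nothing guarantees that this set meets the cylinder $Q_1$ on which you have just bounded $u_{i_0}$ from below, nor even the box $[-2\theta,2\theta]\times[-\theta,\theta]^N$. If the support of the off-diagonal entries lies far from the working region, the source term $h_{j_0i_0}u_{i_0}$ vanishes where you need it, and no barrier argument can produce a lower bound for $u_{j_0}$ with a constant depending only on $N$, $d$, $\theta$, the coefficients of $L^i$ and $\bar h_0$. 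This is precisely the point where the hypotheses of the cited references are stronger than bare full coupling: F\"oldes--Pol\'a\v{c}ek assume \emph{pointwise} irreducibility (the coupling is quantitatively active at every point), and Girardin--Mazari assume space--time periodicity of the off-diagonal entries (so the coupling is active, with the same lower bound, in every periodicity cell and hence inside every cylinder of fixed size). Your proof needs one of these localizations, or an explicit dependence of $\widehat\kappa_\theta$ on where and how strongly the coupling acts inside the box.

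A secondary, more technical issue: in your first step you apply ``the scalar Harnack inequality'' to $u_{i_0}$ at a maximum point, but $u_{i_0}$ is only a supersolution of the scalar equation, with a nonnegative right-hand side $\sum_{j\neq i_0}h_{i_0j}u_j$ that is itself controlled only by the quantity $M$ you are trying to estimate; a full Harnack inequality with source then reads $M\leq C(\inf u_{i_0}+C'M)$ and is circular. The correct route, as in \cite{foldes2009cooperative}, combines the local boundedness estimate for the subsolution inequality with the \emph{weak} Harnack inequality (in terms of $L^p$ averages) for the supersolution inequality, and orders the iteration over components so that the circularity is avoided. Your final chaining step to reach $[\tfrac{3\theta}{2},2\theta]\times[-\theta,\theta]^N$ is fine for supersolutions once a pointwise lower bound on a set of positive measure is in hand.
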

Note that in \cite[Proposition 2.4]{girardin2023generalized}, only the diagonal entries of the zero-order term matrix are not required to be periodic.
But going their proof, the Harnack inequality from Theorem \ref{Har-ineq} still holds as long as $H$ is globally bounded.

\bibliographystyle{amsplain}
\begin{footnotesize}
\bibliography{bibref_DDG}
\end{footnotesize}
\end{document}